\title{Survey on analytic and topological torsion}
\author{Wolfgang L\"uck}
        \address{Mathematisches Institut der Universit\"at Bonn\\
                Endenicher Allee 60\\
                53115 Bonn, Germany}
         \email{wolfgang.lueck@him.uni-bonn.de}
          \urladdr{http://www.him.uni-bonn.de/lueck}
         \date{November, 2015}
\keywords{Analytic torsion, topological torsion, Laplace operator.}
    \subjclass[2010]{57Q10, 58J52.}
\newtheorem{theorem}{Theorem}[section]
\newtheorem{lemma}[theorem]{Lemma}
\newtheorem{corollary}[theorem]{Corollary}
\theoremstyle{definition}
\newtheorem{definition}[theorem]{Definition}
\theoremstyle{remark}
\newtheorem{example}[theorem]{Example}
\newtheorem{remark}[theorem]{Remark} 
\DeclareMathAlphabet\EuR{U}{eur}{m}{n}
\SetMathAlphabet\EuR{bold}{U}{eur}{b}{n}
\newcommand{\calh}{\mathcal{H}}
\newcommand{\call}{\mathcal{L}}
\newcommand{\IC}{{\mathbb C}}
\newcommand{\IQ}{{\mathbb Q}}
\newcommand{\IR}{{\mathbb R}}
\newcommand{\IZ}{{\mathbb Z}}
\newcommand{\Alt}{\operatorname{Alt}}
\newcommand{\an}{\operatorname{an}}
\newcommand{\coker}{\operatorname{coker}}
\newcommand{\clos}{\operatorname{clos}}
\newcommand{\cone}{\operatorname{cone}}
\newcommand{\dR}{\operatorname{dR}}
\newcommand{\harm}{\operatorname{harm}}
\newcommand{\id}{\operatorname{id}}
\newcommand{\im}{\operatorname{im}}
\newcommand{\pd}{\operatorname{pd}}
\newcommand{\pr}{\operatorname{pr}}
\newcommand{\Real}{\operatorname{Real}}
\newcommand{\Rep}{\operatorname{Rep}}
\newcommand{\Riem}{\operatorname{Riem}}
\newcommand{\sing}{\operatorname{sing}}
\newcommand{\sign}{\operatorname{sign}}
\newcommand{\spec}{\operatorname{spec}}
\newcommand{\topo}{\operatorname{top}}
\newcommand{\tors}{\operatorname{tors}}
\newcommand{\tr}{\operatorname{tr}}
\newcommand{\vol}{\operatorname{vol}}
\newcommand{\dvol}{\,d\!\vol}
\DeclareTextFontCommand{\textcyr}{\cyr}
\def\cprime{\char126}
\newcommand{\higherlim}[3]{{\setbox1=\hbox{\rm lim}
        \setbox2=\hbox to \wd1{\leftarrowfill} \ht2=0pt \dp2=-1pt
        \mathop{\vtop{\baselineskip=5pt\box1\box2}}
        _{#1}}^{#2}#3}
\newcommand{\version}[1]                       
{\begin{center} last edited on #1\\
last compiled on \today\\
name of texfile: \jobname
\end{center}
}
\newcounter{commentcounter}
\begin{document}

\maketitle


\thispagestyle{empty}

\begin{abstract}

The article consists of a survey on analytic and topological torsion. Analytic
torsion is defined in terms of the spectrum of the analytic Laplace
operator on a Riemannian manifold, whereas  topological torsion is
defined in terms of a triangulation. The celebrated theorem of Cheeger
and M\"uller identifies these two notions for closed Riemannian
manifolds. We also deal with manifolds with boundary and with isometric actions
of finite groups.  The basic theme is to extract topological invariants from the spectrum of
the analytic Laplace operator on a Riemannian manifold.

\end{abstract}

\setcounter{section}{-1}


\typeout{------------------------------- Section 0: Introduction   --------------------------------}

\section{Introduction}

When I was asked to write a contribution to a book in honor of Bernhard Riemann, I was on
one side flattered, but on the other side also scared. Although Riemann has done so much
foundational and seminal work in many areas, there was no obvious topic, where I may have
something to say and on which Riemann has worked. Moreover, I am  obviously not an expert on the 
history of mathematics.

After some thought I decided to choose as topic analytic and topological torsion. This is
an interesting example for an interaction between analysis and topology and this is seems
to be a theme, in which Riemann was interested..  The goal  is to extract topological
invariants from the spectrum of the analytic Laplace operator on a Riemannian manifold.

Finally I had to decide on the structure of the paper and for whom it should be written.
A technical paper on latest results did not seem to be appropriate. So I decided to tell
the story how one can come from elementary considerations about linear algebra of
finite-dimensional Hilbert spaces and elementary invariants such as dimension, trace, and
determinant to topological notions, which are in general easy, and then to their analytic
counterparts, which are in general much more difficult.  Hopefully the first sections are
comprehensible even for graduate students and present some important tools and notions, which can be
transferred to the analytic setting with some effort. Moreover, this transition explains
the basic ideas underlying the analytic notions. For an advanced mathematician, who is not
an expert on analytic or topological torsion, it may be interesting to see how this
interaction between analysis and topology is developed and what its impact is.  We tried to
keep the exposition as simple as possible to ensure that the paper is accessible. 
This also means that for an expert on analytic and topological torsion this article will contain 
no new information.

Here is a brief summary of the contents of this paper.

In the first section we recall in the framework of linear maps between finite-dimensional
Hilbert spaces basic notions such as the trace, the determinant and the spectrum.  We will
rewrite the classical notion of a determinant in terms of the Zeta-function and the
spectral density function. The point will be that in this new form they
can be extended to the analytic setting, where one has to deal with infinite-dimensional Hilbert spaces.
This is not possible if one sticks to the classical definitions.

In the second section we consider finite Hilbert chain complexes, which are chain
complexes of finite-dimensional Hilbert spaces for which only finitely many chain modules
are not zero. For those we can define Betti numbers and torsion invariants and give an
elementary ``baby'' version of the Hodge de Rham decomposition.

In the third section we pass to analysis. Our first interaction between analysis and
topology will be presented by the de Rham Theorem.  Then we will explain the Hodge-de Rham
Theorem which relates the singular cohomology of a closed Riemannian manifold to the space
of harmonic forms.

In the fourth section topological torsion is defined by considering cellular chain
complexes of finite $CW$-complexes or closed Riemannian manifolds. It can be written in
terms of the combinatorial Laplace operator in an elementary fashion except that one has
to correct the Hilbert space structure on the homology using the isomorphisms of the third
section.

The fifth section is devoted to analytic torsion. Its definition is rather complicated,
but it should become clear what the idea behind the definition is, in view of the definition of
the topological torsion. We will explain that topological and analytical torsion agree
for closed Riemannian manifolds. If the compact Riemannian manifold has boundary, then a
correction term based on the Euler characteristic of the boundary is needed.

In the sixth section  the results of the fifth section are extended to compact Riemannian manifolds
with an isometric action of a finite group.  Here a new phenomenon occurs, namely a third torsion
invariant, the Poincar\'e torsion, comes into play.

In the seventh section we give a very brief overview over the literature about analytic and
topological torsion and its generalization to the $L^2$-setting.

\subsection*{Acknowledgments}
The paper is financially supported by the Leibniz-Award of the author granted by
the Deutsche Forschungsgemeinschaft {DFG}.


\tableofcontents


\typeout{------------------------- Section 1: Operators of finite-dimensional Hilbert spaces  ---------------------}

\section{Operators of finite-dimensional Hilbert spaces}
\label{sec:Operators_of_finite-dimensional_Hilbert_spaces}

In this section we review some well-known concepts about a linear map between
finite-dimensional (real) Hilbert spaces such as its determinant, its trace,
and its spectrum.  All of the material presented in this section is accessible to a student in his second year.
Often key ideas can easily be seen and illustrated in this elementary context. Moreover,
we will sometimes rewrite a well-known notion in a fashion which will later allow us to
extend it to more general situations.


\subsection{Linear maps between finite-dimensional vector spaces}
\label{subsec:Linear_maps_between_finite-dimensional_vector_spaces}

Let $f \colon V \to W$ be a linear map of finite-dimensional (real) vector spaces. Recall that every finite-dimensional
vector space $V$ carries a unique topology which is characterized by the property that  any linear isomorphism
$f \colon \IR^n \xrightarrow{\cong} V$  is a homeo\-mor\-phism. This definition makes sense since
any linear automorphism of $\IR^n$ is a homeomorphism. In particular any linear map $f \colon V \to W$ 
of finite-dimensional vector spaces is an \emph{operator}, i.e., a continuous linear map. 

We can assign to an endomorphisms $f \colon V \to V$ of  a finite-dimensional vector space $V$ two basic invariants,
its trace  and its determinant,  as follows. If we write
$V^* = \hom_{\IR}(V,\IR)$, then there are canonical linear maps
\begin{eqnarray*}
\alpha \colon V^* \otimes V 
& \to & 
\hom_{\IR}(V,V), \quad \phi \otimes v \mapsto\left(w \mapsto \phi(w) \cdot v\right);
\\
\beta \colon V^* \otimes V 
& \xrightarrow{\cong} & 
\hom_{\IR}(V,V), \quad \phi \otimes v \mapsto \phi(v).
\end{eqnarray*}
The first one is an isomorphism. Hence we can define the trace map to be the composite
\[
\tr \colon \hom_{\IR}(V,V) \xrightarrow{\alpha^{-1}} V^* \otimes V \xrightarrow{\beta} \IR,
\]
and the \emph{trace of $f$} 
\begin{equation} 
\tr(f) \in \IR
\label{trace_of_f_finite_dimensional}
\end{equation}
to be the image of $f$ under this linear map. 
The trace has the basic properties that $\tr(g \circ f) = \tr(f \circ g)$ holds for linear
maps $f \colon U \to V$ and $g \to V \to W$, it is linear, i.e., $\tr(r \cdot f + s
\cdot g) = r \cdot \tr(f) + s \cdot \tr(g)$, and $\tr(\id_{\IR} \colon \IR \to \IR) = 1$.  We
leave it to the reader to check that these three properties determine the trace uniquely.

If $n$ is the dimension of $V$, the vector space $\Alt^n(V)$ of alternating $n$-forms 
$V\times V \times \cdots \times V \to \IR$ has dimension one. An endomorphism $f \colon V \to V$ 
induces an endomorphism $\Alt^n(f) \colon \Alt^n(V) \to \Alt^n(V)$.  Hence there is
precisely one real number $r$ such that $\Alt^n(f) = r \cdot \id_{\Alt^n(V)}$ and we define the
\emph{determinant}  of $f$ to be $r$, or, equivalently, by the equation
\begin{equation}
\det(f) \cdot \id_{\Alt^n(f)} = \Alt^n(f).
\label{determinant_finite-dimensional}
\end{equation}
The determinant has the properties that
$\det(g \circ f) = \det(g) \cdot \det(f)$ holds for endomorphisms $f,g \colon V \to V$,
for any commutative diagram with exact rows
\[\xymatrix{
0 \ar[r]
&
U \ar[r]^i \ar[d]^f
&
V \ar[r]^p \ar[d]^g
&
W \ar[r] \ar[d]^h
&
0
\\
0 \ar[r]
&
U \ar[r]^i 
&
V \ar[r]^p
&
W \ar[r] 
&
0
}
\]
we get $\det(g) = \det(f) \cdot \det(h)$ and $\det(\id_{\IR}) = 1$. 
We leave it to the reader to check that these three properties determine the determinant uniquely.

Notice that all of our definitions are intrinsic, we do not use bases. Of course if we
choose a basis $\{b_1, b_2, \ldots ,b_n\}$ for $V$ and let $A$ be the $(n,n)$-matrix
describing $f$ with respect to this basis, then we get back the standard definitions in
terms of matrices
\begin{eqnarray*}
\tr(f) 
& = & 
\sum_{i =1}^n a_{i,i};
\\
\det(f) 
& = & 
\prod_{\sigma \in  S_n} \sign(\sigma) \cdot \prod_{i=1}^n a_{i,\sigma(i)}.
\end{eqnarray*}


\subsection{Linear maps between finite-dimensional Hilbert spaces}
\label{subsec:Linear_maps_between_finite-dimensional_Hilbert_spaces}

Now we consider finite-dimensional Hilbert spaces, i.e., finite-dimensional vector spaces
with an inner product.  Notice that we do not have to require that $V$ is complete with
respect to the induced norm, this is automatically fulfilled.  Let $f \colon U \to V$ be
a linear map. Its \emph{adjoint} is the linear map $f^* \colon V \to W$ uniquely
determined by the property that $\langle f(v),w \rangle_W = \langle v,f^*(w) \rangle_V$
holds for all $v \in V$ and $w \in W$. If we choose orthonormal basis for $V$ and $W$ and
let $A(f)$ and $A(f^*)$ be the matrices describing $f$ and $f^*$, then $A(f)^*$ is the
transpose of $A(f)$.  We call an endomorphism $f \colon V \to V$
\emph{selfadjoint} if and only if $f^* = f$.  This is equivalent to the condition that
$A(f)$ is symmetric. We call an endomorphism $f \colon V \to V$ \emph{positive} if 
$\langle f(v),v \rangle \ge 0$ holds for all $v \in V$. This is equivalent to the existence of a
linear map $g \colon V \to V$ with $f = g^*g$. In particular every positive linear endomorphism  is
selfadjoint.

The following version of a determinant will be of importance for us.  Let $f \colon V \to W$ 
be a linear map of finite-dimensional Hilbert spaces, where $V$ and $W$ may be
different.  Then $f^*f \colon V \to V$ induces an automorphism $(f^*f)^{\perp} \colon
\ker(f^*f)^{\perp} \xrightarrow{\cong} \ker(f^*f)^{\perp}$, where $\ker(f^*f)^{\perp}$ is
the orthogonal complement of $\ker(f^*f)$ in $U$.  Define
\begin{equation}
{\det}^{\perp}(f) 
:= 
\begin{cases} 
\sqrt{\det\bigl((f^*f)^{\perp} \colon \ker(f^*f)^{\perp} \xrightarrow{\cong} \ker(f^*f)^{\perp}\bigr)}
& \text{if}\; f \not= 0
\\
1 & \text{if}\; f = 0.
\end{cases}
\label{det_perp(f)-finite-dimensional}
\end{equation}

The proof of the following elementary lemma is left to the reader, 
or consult~\cite[Theorem~3.14 on page~128 and Lemma~3.15 on page~129]{Lueck(2002)}.

\begin{lemma} \label{lem:main_properties_of_det_perp}\
\begin{enumerate}
\item \label{lem:main_properties_of_det_perp:auto} If $f \colon V \to V$ is a linear
  automorphism of a finite-dimensional Hilbert space, then ${\det}^{\perp}(f) = |\det(f)|$
  for $\det(f)$ the classical determinant;

\item \label{lem:main_properties_of_det_perp:composition} Let $f\colon U \to V$ and
  $g\colon V \to W$ be linear maps of  finite-dimensional Hilbert spaces such that $f$
  is surjective and $g$ is injective. Then
\[
{\det}^{\perp}(g \circ f)  =  {\det}^{\perp}(f) \cdot {\det}^{\perp}(g);
\]

\item
\label{lem:main_properties_of_det_perp:additivity}
Let $f_1\colon  U_1 \to V_1$, $f_2\colon  U_2 \to V_2$ and
$f_3\colon  U_2 \to V_1$ be linear maps of finite-dimensional Hilbert spaces
such that  $f_1$ is surjective  and $f_2$ is injective. Then
\[
{\det}^{\perp}\begin{pmatrix} f_1 & f_3 \\ 0 & f_2\end{pmatrix}
 = 
{\det}^{\perp}(f_1) \cdot {\det}^{\perp}(f_2);
\]

\item
\label{lem:main_properties_of_det_perp:direct_sums}
Let $f_1\colon U_1 \to V_1$ and $f_2\colon U_2 \to V_2$ be linear maps of finite-dimensional
Hilbert spaces.  Then
\[
{\det}^{\perp}(f_1 \oplus f_2) 
 = 
{\det}^{\perp}(f_1) \cdot {\det}^{\perp}(f_2);
\]

\item \label{lem:main_properties_of_det_perp:det(f)_is_det(f_ast)} 
Let $f\colon U \to V$  be a linear map of finite-dimensional Hilbert spaces. Then
\[
{\det}^{\perp}(f) = {\det}^{\perp}(f^*) =\sqrt{{\det}^{\perp}(f^*f)} = \sqrt{{\det}^{\perp}(ff^*)}.
\]
\end{enumerate}
\end{lemma}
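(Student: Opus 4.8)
The plan is to reduce all five assertions to the already--established properties of the classical determinant of an isomorphism of finite--dimensional vector spaces, by replacing an arbitrary linear map $f\colon U \to V$ of finite--dimensional Hilbert spaces with the isomorphism it induces between the orthogonal complements of its kernel and its cokernel.

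\textbf{The basic reduction.} First I would record that $\ker(f^*f) = \ker(f)$ (apply $\langle -,v\rangle$ to $f^*fv = 0$ to get $\|f(v)\| = 0$), that $\im(f) = \ker(f^*)^{\perp}$ and $\im(f^*) = \ker(f)^{\perp}$, and hence that $f$ restricts to an isomorphism $f^{\perp}\colon \ker(f)^{\perp} \xrightarrow{\cong} \im(f)$ whose adjoint is the corresponding restriction of $f^*$, i.e.\ $(f^{\perp})^* = (f^*)^{\perp}\colon \im(f)\xrightarrow{\cong}\ker(f)^{\perp}$. Consequently $(f^{\perp})^*\circ f^{\perp}$ is precisely the automorphism of $\ker(f)^{\perp} = \ker(f^*f)^{\perp}$ appearing in the definition of ${\det}^{\perp}(f)$, so ${\det}^{\perp}(f) = {\det}^{\perp}(f^{\perp})$, and it suffices to understand ${\det}^{\perp}$ on isomorphisms, where by definition ${\det}^{\perp}(h) = \sqrt{\det(h^*h)}$. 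The one tool I need about isomorphisms is the conjugacy $hh^* = h\circ(h^*h)\circ h^{-1}$, which gives $\det(hh^*) = \det(h^*h)$; more generally, for isomorphisms $h_1\colon A \xrightarrow{\cong} B$ and $h_2\colon B\xrightarrow{\cong} C$, conjugation by $h_1$ identifies $(h_2h_1)^*(h_2h_1)$ with $(h_1h_1^*)(h_2^*h_2)$, so multiplicativity of $\det$ yields ${\det}^{\perp}(h_2h_1) = {\det}^{\perp}(h_1)\cdot{\det}^{\perp}(h_2)$ and ${\det}^{\perp}(h^*) = {\det}^{\perp}(h)$. (Equivalently, pick orthonormal bases and use $\det(MN) = \det(NM)$ together with $\det(M^{\mathsf T}) = \det(M)$.)

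\textbf{Deducing the five statements.} With this machinery, part~\ref{lem:main_properties_of_det_perp:auto} is immediate because $\ker(f^*f) = 0$ for an automorphism, giving ${\det}^{\perp}(f) = \sqrt{\det(f^*f)} = \sqrt{\det(f)^2} = |\det(f)|$; and part~\ref{lem:main_properties_of_det_perp:direct_sums} follows since $(f_1\oplus f_2)^*(f_1\oplus f_2)$, its kernel and the relevant orthogonal complement all split as direct sums while $\det$ is multiplicative on direct sums. For part~\ref{lem:main_properties_of_det_perp:composition} I would note that injectivity of $g$ and surjectivity of $f$ force $\ker(g\circ f) = \ker(f)$ and $\im(g\circ f) = \im(g)$, so $(g\circ f)^{\perp} = g^{\perp}\circ f^{\perp}$ is a composite of isomorphisms, to which the composition rule of the basic reduction applies. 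For part~\ref{lem:main_properties_of_det_perp:det(f)_is_det(f_ast)}, the identity $(f^{\perp})^* = (f^*)^{\perp}$ together with ${\det}^{\perp}(h^*) = {\det}^{\perp}(h)$ gives ${\det}^{\perp}(f) = {\det}^{\perp}(f^*)$; and feeding the positive selfadjoint operator $f^*f$ (which preserves $\ker(f^*f)^{\perp}$ and satisfies $\ker((f^*f)^2) = \ker(f^*f)$) into the definition gives ${\det}^{\perp}(f^*f) = \det\bigl((f^*f)^{\perp}\bigr) = {\det}^{\perp}(f)^2$, and symmetrically ${\det}^{\perp}(ff^*) = {\det}^{\perp}(f^*)^2 = {\det}^{\perp}(f)^2$. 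Finally, for part~\ref{lem:main_properties_of_det_perp:additivity} I would compute the kernel and image of the block matrix to be $\ker(f_1)\oplus\{0\}$ and $V_1\oplus\im(f_2)$, so that its restriction to the complement of the kernel is the block isomorphism $\bigl(\begin{smallmatrix} f_1^{\perp} & f_3\\ 0 & f_2^{\perp}\end{smallmatrix}\bigr)$, which factors as the unipotent automorphism $\bigl(\begin{smallmatrix}\id & f_3(f_2^{\perp})^{-1}\\ 0 & \id\end{smallmatrix}\bigr)$ (of determinant $1$, hence of ${\det}^{\perp}$ equal to $1$ by part~\ref{lem:main_properties_of_det_perp:auto}) composed with $f_1^{\perp}\oplus f_2^{\perp}$; parts~\ref{lem:main_properties_of_det_perp:composition} and~\ref{lem:main_properties_of_det_perp:direct_sums} then finish it. The degenerate cases in which some map vanishes are handled directly by the convention ${\det}^{\perp}(0) = 1$.

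\textbf{Main obstacle.} Essentially all the content sits in the basic reduction; once one has ${\det}^{\perp}(f) = {\det}^{\perp}(f^{\perp})$ and the conjugacy $hh^*\sim h^*h$, the five statements are bookkeeping. The step I expect to require the most care is verifying that restricting a map to the orthogonal complement of its kernel is compatible with taking adjoints, i.e.\ that $(f^{\perp})^* = (f^*)^{\perp}$, since this is what simultaneously legitimizes the reduction and delivers part~\ref{lem:main_properties_of_det_perp:det(f)_is_det(f_ast)}.
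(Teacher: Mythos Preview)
Your proposal is correct. Note, however, that the paper does not actually give a proof of this lemma: it is explicitly ``left to the reader'' with a pointer to \cite[Theorem~3.14 and Lemma~3.15]{Lueck(2002)}. So there is no in-paper argument to compare against; your write-up simply fills in what the author omitted, and it does so cleanly via the reduction ${\det}^{\perp}(f) = {\det}^{\perp}(f^{\perp})$ to the isomorphism case, which is exactly the kind of elementary verification the author had in mind.
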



\subsection{The spectrum and the spectral density function}
\label{subsec:The_spectrum_and_in_the_spectral_density_function}

If one is only interested in finite-dimensional Hilbert spaces and in Betti numbers or
torsion invariants for finite $CW$-complexes, one does not need the following material of
the remainder of this Section~\ref{sec:Operators_of_finite-dimensional_Hilbert_spaces}. 
However, we will now lay the foundations to extend these
invariants to the analytic setting or to the $L^2$-setting, where the Hilbert spaces are
not finite-dimensional anymore.

The spectrum $\spec(f)$ of a selfadjoint operator $f \colon V \to V$ of finite-dimensional
Hilbert spaces consists of the set of eigenvalues $\lambda$ of $f$, i.e., real numbers
$\lambda$ for which there exists $v \in V$ with $v \not= 0$ and $f(v) = \lambda \cdot
v$. The multiplicity $\mu(f)(\lambda)$ of an eigenvalue $\lambda$ is the dimension of its
eigenspace 
\[
E_{\lambda}(f) := \{v \in V \mid f(v) = \lambda \cdot v\}.
\]
If $\lambda \in \IR$ is not an eigenvalue,
we put $\mu(f)(\lambda) = 0$. An elementary but basic result in
linear algebra says that for a selfadjoint linear map $f \colon V \to V$ there exists an
orthonormal basis of eigenvectors of $V$. A selfadjoint linear endomorphism is positive if
$\lambda \ge 0$ holds for each eigenvalue $\lambda$. 

Next we introduce for a linear map $f \colon U \to V$ of finite-dimensional Hilbert spaces
its \emph{spectral density function}
\begin{equation}
F(f) \colon [0,\infty) \to [0,\infty).
\label{spectral_density_function_finite-dimensional_Hilbert_spaces}
\end{equation}
It is defined as the following right continuous step function. Its value at zero is the
dimension of the kernel of $f^*f$.  Notice that $\ker(f^*f) = \ker(f)$ since $v \in
\ker(f^*f)$ implies $0 = \langle f^*f(v),v) \rangle = \langle f(v),f(v) \rangle$ and hence
$f(v) = 0$.  The jumps of the step function happen exactly at the square roots of the
eigenvalues of $f^*f$ and the height of the jump is the multiplicity $\mu(f^*f)(\lambda)$ of the
eigenvalue. There is a number $C \ge 0$ such that $F(f)(\lambda) = \dim(V)$ holds for
all $\lambda \ge C$, for instance, take $C$ to be the square root of the largest eigenvalue of $f^*f$.
Obviously $f$ is injective if and only if $F(f)(0) = 0$.

Suppose that $f$ is already a positive operator $f \colon V \to V$. Then $f^*f$ is $f^2$.
Moreover, $F(f)$ has the dimension of $\ker(f)$ as value at zero and the step function jumps
exactly at those $\lambda \in \IR$ which are eigenvalues of $f$ and the height of the jump 
is $\mu(f)(\lambda)$.

One can also define the spectral density function of a linear map $f \colon V \to W$ of
finite-dimensional Hilbert spaces without referring to eigenvalues in a more intrinsic
way as follows.  Let $\call(f,\lambda)$ be the set of linear subspaces $L \subseteq V$
such that $||f(v)|| \le \lambda \cdot ||v||$ holds for every $v \in L$. Then we
from~\cite[Lemma~2.3 on page~74]{Lueck(2002)}
\begin{equation}
F(f)(\lambda) = \sup\{\dim(L) \mid L \in \call(f,\lambda)\}.
\label{F(f)(lambda)_n_terms_of_call(f,lambda)-finite-dimensional}
\end{equation}


\subsection{Rewriting determinants}
\label{subsec:Rewriting_determinants}

The following  formula will be of central interest for us. 
Let $f \colon V \to W$ be a linear map of finite-dimensional Hilbert spaces.
Notice that ${\det}^{\perp}(f) > 0$ so that we can consider the real number $\ln({\det}^{\perp}(f))$.
The formula
\begin{equation}
\ln({\det}^{\perp}(f)) 
 =  
\frac{1}{2} \cdot \sum_{\substack{\lambda \in \spec(f^*f), \\ \lambda > 0}}
\mu(f^*f)(\lambda) \cdot \ln(\lambda)
\label{ln(det_perp)_and_sum_of_logarithms_of-eigenvalues_finite_dimensional}
\end{equation}
is a direct consequence of the fact that we have orthogonal decompositions
\begin{eqnarray*}
V
& = & 
\bigoplus_{\lambda \in \spec(f^*f)} E_{\lambda}(f^*f);
\\
\ker(f^*f)^{\perp} 
& = & 
\bigoplus_{\substack{\lambda \in \spec(f^*f), \\ \lambda > 0}} E_{\lambda}(f^*f).
\end{eqnarray*}
We can use this orthogonal decomposition to define a new linear automorphism of $V$ by
\begin{eqnarray}
\ln((f^*f)^{\perp}) := \bigoplus_{\substack{\lambda \in \spec(f^*f), \\ \lambda > 0}} \;\ln(\lambda) \cdot \id_{E_{\lambda}(f^*f)}
\colon \ker(f^*f)^{\perp} \to \ker(f^*f)^{\perp}.
\label{ln(f_astf}
\end{eqnarray}
Then we can rephrase~\eqref{ln(det_perp)_and_sum_of_logarithms_of-eigenvalues_finite_dimensional}
as 
\begin{equation}
\ln({\det}^{\perp}(f)) 
 =  
\frac{1}{2} \cdot  \tr\bigl(\ln((f^*f)^{\perp})\bigr).
\label{ln(det_perp)_and_trace_of_logarithm__finite_dimensional}
\end{equation}

The following observation will be the key to define determinants also for operators
between not necessarily finite-dimensional Hilbert spaces, for instance for the analytic 
Laplace operator acting on smooth $p$-forms for a closed Riemannian manifold.
Namely, we define a holomorphic function $\zeta_f\colon \IC \to \IC$ by
\begin{equation}
\zeta_f(s) = \sum_{\substack{\lambda \in \spec(f^*f), \\ \lambda > 0}}
\mu(f^*f)(\lambda) \cdot \lambda^{-s},
\label{zeta(f)_finite-dimensional}
\end{equation}
Then one easily checks
using~\eqref{ln(det_perp)_and_sum_of_logarithms_of-eigenvalues_finite_dimensional}
\begin{eqnarray}
- \ln({\det}^{\perp}(f)) 
 & = & 
- \frac{1}{2} \cdot  \sum_{\substack{\lambda \in \spec(f^*f), \\ \lambda > 0}}
\mu(f^*f)(\lambda) \cdot \ln(\lambda)
\label{det_and_zeta_fin_dim}
\\
& = & 
\frac{1}{2} \cdot  \sum_{\substack{\lambda \in \spec(f^*f), \\ \lambda > 0}}
\mu(f^*f)(\lambda) \cdot \left.\frac{d}{ds}\right|_{s = 0}  \lambda^{-s}
\nonumber
\\
& = & 
\frac{1}{2} \cdot  \left.\frac{d}{ds}\right|_{s = 0} \zeta_f.
\nonumber
\end{eqnarray}

In order to extend the notion of ${\det}^{\perp}(f)$ in the $L^2$-setting to the Fuglede-Kadison determinant,
it is useful to rewrite the quantity $\ln({\det}^{\perp}(f))$ in terms of an integral with respect to  measure coming from
the spectral density function as follows.

Recall that $F(f)$ is a monotone non-decreasing right-continuous
function.  Denote by $dF(f)$ the measure on the Borel $\sigma$-algebra on $\IR$
which is uniquely determined by its values on the half open intervals
$(a,b]$ for $a < b$ by  $dF(f)((a,b])  = F(f)(b) - F(f)(a)$.
The measure of the one point set $\{a\}$
is $\lim_{x\to 0+} F(f)(a)-F(f)(a-x)$ and is zero if and only if
$F(f)$ is left-continuous in $a$. We will use here and in the sequel the convention that
$\int_a^b$, $\int_{a+}^b$, $\int_a^{\infty}$ and $\int_{a+}^{\infty}$
respectively
means integration over the interval $[a,b]$, $(a,b]$, $[a,\infty)$ and
$(a,\infty)$ respectively. An easy computation 
using~\eqref{ln(det_perp)_and_sum_of_logarithms_of-eigenvalues_finite_dimensional}
shows
\begin{eqnarray}
\ln({\det}^{\perp}(f^*f))
& = & 
\int_{0+}^{\infty} \ln(\lambda) \;dF(f).
\label{ln(det_perp_and_int_ln(lambda)df)_finite-dimensional_case}
\end{eqnarray}
Elementary integration theory shows that we get for $d\lambda$ the standard Lebesgue measure
and any $a \ge  \dim(U)$
\begin{equation}
\int_{0+}^{\infty} \ln(\lambda) \;dF
 = 
\ln(a) \cdot (F(a) - F(0)) -\int_{0+}^a \frac{F(f)(\lambda) - F(f)(0)}{\lambda} \; d\lambda.
\label{int_ln(lambda)df)_and_int_in_terms_of_Lebesgue_measure_finite-dimensional_case}
\end{equation}


\typeout{------------ Section 2: -------------}

\section{Finite Hilbert chain complexes}
\label{sec:Finite_Hilbert_chain_complexes}

Having in mind the cellular chain complex of a finite $CW$-complex, we want to consider
now finite Hilbert chain complexes. A \emph{finite Hilbert chain complex} 
$C_* = (C_*,c_*)$ consists of a collection of finite-dimensional Hilbert spaces $C_n$ and linear
maps $c_n \colon C_n \to C_{n-1}$ for $n \in \IZ$ such that $c_n \circ c_{n+1} = 0$ holds
for all $n \in \IZ$ and there exists a natural number $N$ with $c_n = 0$ for $|n| > N$. A
\emph{chain map of finite Hilbert chain complexes} $f_* \colon C_* \to D_*$ is a
collection of linear maps $f_n \colon C_n \to D_n$ for $n \in \IZ$ such that $d_n \circ
f_n = f_{n-1} \circ c_n$ holds for all $n \in \IZ$. (We do not require that the maps $f_n$
are compatible with the Hilbert space structures.) It is obvious what a chain homotopy
and a chain homotopy equivalence of finite Hilbert chain complexes means.  The homology
$H_n(C_*)$ is the Hilbert space $\ker(c_n)/\im(c_{n+1})$, where $\ker(c_n)$ is equipped
with the Hilbert space structure coming from $C_n$ and $\ker(c_n)/\im(c_{n+1})$ inherits
the quotient Hilbert space structure. Define the \emph{$n$-th Laplace operator} 
\begin{equation}
\Delta_n = c_n^* \circ c_n + c_{n+1} \circ c_{n+1}^* \colon C_n \to C_n.
\label{Delta_n_finite_Hilbert_chain_complexes}
\end{equation}

The importance of the following notions cannot be underestimated.

\begin{definition}[Betti numbers and torsion of a finite Hilbert chain complex]
\label{def:Betti_numbers_and_torsion_of_a_finite_Hilbert_chain_complex}
Let $C_*$ be a finite Hilbert chain complex. 

Define its \emph{$n$-th Betti number}
\[
b_n(C_*) := \dim(H_n(C_*)) \quad \in \IZ_{\ge 0}.
\]
Define its \emph{torsion}
\[
\rho(C_*) := - \sum_{n \in \IZ} (-1)^n \cdot \ln\bigl({\det}^{\perp}(c_n)\bigr) \quad \in \IR,
\]
where $\det^{\perp}$ has been introduced in~\eqref{det_perp(f)-finite-dimensional}.
\end{definition}


\subsection{Betti numbers}
\label{subsec:Betti_numbers}

Next we relate these notions to the Laplace operator.
The following result is a ``baby''-version of the Hodge-de Rham Theorem,
see Subsection~\ref{subsec:The_Hodge_deRham_Theorem}.
In the sequel we equip $\ker(\Delta_n) \subseteq C_n$ with the Hilbert space
structure induced from the given one on $C_n$.

\begin{lemma} \label{lem:Hodge_decomposition}
Let $C_*$ be a finite Hilbert chain complex. Then we get for all $n \in \IZ$
\[
\ker(\Delta_n) = \ker(c_n) \cap \im(c_{n+1})^{\perp},
\]
and an orthogonal decomposition
\[
C_n = 
\im(c_n^*) \oplus\im(c_{n+1}) \oplus \ker(\Delta_n).
\]
In particular the obvious composite
\[
\ker(\Delta_n) \to \ker(c_n) \to H_n(C_n)
\]
is an isometric isomorphism of Hilbert spaces and we get
\[
b_n(C_*) = \dim(\ker(\Delta_n)).
\]
\end{lemma}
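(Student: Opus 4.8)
The plan is to establish the three displayed assertions in sequence, using only elementary Hilbert space linear algebra. First I would identify $\ker(\Delta_n)$. Since $\Delta_n = c_n^*c_n + c_{n+1}c_{n+1}^*$, for any $x \in C_n$ we compute $\langle \Delta_n x, x\rangle = \langle c_n x, c_n x\rangle + \langle c_{n+1}^* x, c_{n+1}^* x\rangle = \|c_n x\|^2 + \|c_{n+1}^* x\|^2$. Hence $\Delta_n x = 0$ forces $c_n x = 0$ and $c_{n+1}^* x = 0$ (this is the same positivity trick already used in the excerpt for $\ker(f^*f) = \ker(f)$). Conversely if both vanish then clearly $\Delta_n x = 0$. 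So $\ker(\Delta_n) = \ker(c_n) \cap \ker(c_{n+1}^*)$. It remains to note $\ker(c_{n+1}^*) = \im(c_{n+1})^{\perp}$, which is the standard fact $\ker(g^*) = \im(g)^{\perp}$ for a linear map $g$ of finite-dimensional Hilbert spaces; this gives the first equation.

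Next I would prove the orthogonal decomposition $C_n = \im(c_n^*) \oplus \im(c_{n+1}) \oplus \ker(\Delta_n)$. I would first check the three summands are pairwise orthogonal: $\im(c_n^*) \perp \im(c_{n+1})$ because $\langle c_n^* a, c_{n+1} b\rangle = \langle a, c_n c_{n+1} b\rangle = 0$ by $c_n c_{n+1} = 0$; and $\ker(\Delta_n) = \ker(c_n) \cap \ker(c_{n+1}^*)$ is orthogonal to $\im(c_n^*)$ since $\ker(c_n) = \im(c_n^*)^{\perp}$ (again $\ker(g) = \im(g^*)^{\perp}$), and orthogonal to $\im(c_{n+1})$ since $\ker(c_{n+1}^*) = \im(c_{n+1})^{\perp}$. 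For the sum: taking orthogonal complements, it suffices to show the only vector orthogonal to all three summands is $0$. If $x \perp \im(c_n^*)$ and $x \perp \im(c_{n+1})$, then $x \in \ker(c_n) \cap \ker(c_{n+1}^*) = \ker(\Delta_n)$; if in addition $x \perp \ker(\Delta_n)$ then $x \perp x$, so $x = 0$. (Equivalently one can invoke the finite-dimensional rank–nullity identities $C_n = \im(c_n^*) \oplus \ker(c_n)$ and $\ker(c_n) = (\ker(c_n) \cap \im(c_{n+1})) \oplus (\ker(c_n) \cap \im(c_{n+1})^{\perp})$ and combine.)

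Finally I would deduce the statement about homology. From the first equation, $\ker(\Delta_n) = \ker(c_n) \cap \im(c_{n+1})^{\perp}$ sits inside $\ker(c_n)$, and the decomposition restricted to $\ker(c_n) = \im(c_{n+1}) \oplus \ker(\Delta_n)$ shows $\ker(\Delta_n)$ is a complement to $\im(c_{n+1})$ inside $\ker(c_n)$. Therefore the composite $\ker(\Delta_n) \hookrightarrow \ker(c_n) \twoheadrightarrow \ker(c_n)/\im(c_{n+1}) = H_n(C_*)$ is bijective; it is isometric because $H_n(C_*)$ carries the quotient Hilbert space structure and $\ker(\Delta_n)$, as the orthogonal complement of $\im(c_{n+1})$ in $\ker(c_n)$, maps isometrically onto that quotient (the quotient norm of a class equals the norm of its unique representative in $\im(c_{n+1})^{\perp}$). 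Taking dimensions gives $b_n(C_*) = \dim H_n(C_*) = \dim\ker(\Delta_n)$. I do not expect any genuine obstacle here; the only point requiring a little care is making explicit that the quotient Hilbert space structure on $H_n(C_*)$ is exactly the one transported from the orthogonal complement, so that the isomorphism is isometric rather than merely a linear isomorphism.
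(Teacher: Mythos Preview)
Your proof is correct and follows essentially the same approach as the paper: the paper computes $\langle \Delta_n(v),v\rangle = \|c_n v\|^2 + \|c_{n+1}^* v\|^2$ to identify $\ker(\Delta_n) = \ker(c_n)\cap\ker(c_{n+1}^*) = \ker(c_n)\cap\im(c_{n+1})^{\perp}$, and then simply states that ``the other claims are now direct consequences.'' You have merely spelled out those direct consequences (pairwise orthogonality, spanning, and the isometry with the quotient Hilbert structure) in full.
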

\begin{proof}
Consider $v \in V$. We compute
\begin{eqnarray*}
\langle c_n(v),c_n(v) \rangle  + \langle c_{n+1}^*(v),c_{n+1}^*(v)\rangle
& = & 
\langle c_n^* \circ c_n(v), v \rangle + \langle c_{n+1} \circ c_{n+1}^*(v), v \rangle 
\\
& = & 
\langle c_n^* \circ c_n(v)  + c_{n+1} \circ c_{n+1}^*(v), v \rangle 
\\
& = & 
\langle \Delta_n(v), v \rangle.
\end{eqnarray*}
Hence we get for $v \in V$ that $\Delta_n(v) = 0$ is equivalent to $c_n(v) = c_{n+1}^*(v) = 0$.
This shows $\ker(\Delta_n) = \ker(c_n) \cap \ker(c_{n+1}^*) =  \ker(c_n) \cap \im(c_{n+1})^{\perp}$.
The other claims are now direct consequences.
\end{proof}

\begin{remark}[Homotopy invariance of $\dim(\ker(\Delta_n)$]
  \label{rem:Homotopy_invariance_of_dim(ker(Delta_n)_combinatorial}
  Notice the following fundamental consequence of Lemma~\ref{lem:Hodge_decomposition}
  that $\dim(\ker(\Delta_n))$ depends only on the chain homotopy type of $C_*$ and is in
  particular independent of the Hilbert space structure on $C_*$ since for a chain
  homotopy equivalence $f_* \colon C_* \to D_*$ we obtain an isomorphism 
  $H_n(f_*) \colon  H_n(C_*) \to H_n(D_*)$ and hence the equality $b_n(C_*) = b_n(D_*)$.
  
 Of course the spectrum of the Laplace operator $\Delta_n$ does depend on the Hilbert space structure, but
 a part of it, namely,  the multiplicity of the eigenvalue $0$, which is just $\dim(\ker(\Delta_n))$, 
 depends only on the homotopy type of $C_*$. 
\end{remark}

\begin{remark}[Heat operator]\label{rem:heat_operator_hilbert}
One can assign to the Laplace operator $\Delta_n \colon C_n \to C_n$ its \emph{heat operator}
 $e^{-t\Delta_n}$. It is defined analogously to $\ln(f^*f)$, see~\eqref{ln(f_astf}, 
but now each eigenvalue $\lambda$ of $\Delta_n$ transforms to the  eigenvalue $e^{-t\lambda}$. 

Then we obviously get
\[
b_n(C_*) = \lim_{t \to \infty} \tr(e^{-t \Delta_n}).
\]
\end{remark}

\subsection{Torsion for finite Hilbert chain complexes}
\label{subsec:Torsion_for_finite_Hilbert_chain_complexes}

 The situation with torsion is more complicated, but in some sense similar, as we explain next.
First of all one can rewrite torsion in terms of the Laplace operator.

\begin{lemma} \label{lem:torsion_in_terms_of_Laplacian}
If $C_*$ is a finite Hilbert chain complex, then we get
\[
\rho(C_*) = -\frac{1}{2} \cdot \sum_{n \in \IZ} (-1)^n \cdot n \cdot \ln\bigl({\det}^{\perp}(\Delta_n)\bigr).
\]
\end{lemma}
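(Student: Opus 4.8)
The strategy is to express $\ln\bigl({\det}^{\perp}(\Delta_n)\bigr)$ in terms of the quantities $\ln\bigl({\det}^{\perp}(c_k)\bigr)$ and then reorganize the alternating sum. Recall from Lemma~\ref{lem:Hodge_decomposition} the orthogonal decomposition $C_n = \im(c_n^*) \oplus \im(c_{n+1}) \oplus \ker(\Delta_n)$. The key point is that $\Delta_n = c_n^*c_n + c_{n+1}c_{n+1}^*$ respects this decomposition: on $\im(c_n^*)$ it acts as $c_n^*c_n$ (since $c_{n+1}^*$ vanishes on $\im(c_n^*) \subseteq \ker(c_{n+1}^*)$, as $c_{n+1}^*c_n^* = (c_nc_{n+1})^* = 0$), on $\im(c_{n+1})$ it acts as $c_{n+1}c_{n+1}^*$, and on $\ker(\Delta_n)$ it is zero. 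Hence, using part~\eqref{lem:main_properties_of_det_perp:direct_sums} of Lemma~\ref{lem:main_properties_of_det_perp},
\[
{\det}^{\perp}(\Delta_n) = {\det}^{\perp}\bigl(c_n^*c_n|_{\im(c_n^*)}\bigr) \cdot {\det}^{\perp}\bigl(c_{n+1}c_{n+1}^*|_{\im(c_{n+1})}\bigr).
\]

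Next I would identify each factor. Since $\ker(c_n^*c_n) = \ker(c_n)$ and $\im(c_n^*) = \ker(c_n)^{\perp}$, the restriction of $c_n^*c_n$ to $\im(c_n^*)$ is exactly the automorphism $(c_n^*c_n)^{\perp}$ appearing in the definition~\eqref{det_perp(f)-finite-dimensional} of ${\det}^{\perp}(c_n)$; thus ${\det}^{\perp}\bigl(c_n^*c_n|_{\im(c_n^*)}\bigr) = \det\bigl((c_n^*c_n)^{\perp}\bigr) = {\det}^{\perp}(c_n)^2$. Similarly, by part~\eqref{lem:main_properties_of_det_perp:det(f)_is_det(f_ast)}, ${\det}^{\perp}\bigl(c_{n+1}c_{n+1}^*|_{\im(c_{n+1})}\bigr) = {\det}^{\perp}(c_{n+1}c_{n+1}^*) = {\det}^{\perp}(c_{n+1})^2$ (the nonzero part of the spectrum of $c_{n+1}c_{n+1}^*$ equals that of $c_{n+1}^*c_{n+1}$, and ${\det}^{\perp}$ only sees the nonzero eigenvalues). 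Taking logarithms gives the clean formula
\[
\ln\bigl({\det}^{\perp}(\Delta_n)\bigr) = 2\ln\bigl({\det}^{\perp}(c_n)\bigr) + 2\ln\bigl({\det}^{\perp}(c_{n+1})\bigr).
\]

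Finally I would substitute this into the right-hand side of the claimed identity and compute. Writing $L_k := \ln\bigl({\det}^{\perp}(c_k)\bigr)$, the sum becomes
\[
-\tfrac{1}{2}\sum_{n}(-1)^n n\bigl(2L_n + 2L_{n+1}\bigr) = -\sum_n (-1)^n n L_n - \sum_n (-1)^n n L_{n+1}.
\]
In the second sum I reindex $m = n+1$, so $(-1)^n n = (-1)^{m-1}(m-1) = -(-1)^m(m-1)$, giving $-\sum_m (-1)^{m-1}(m-1)L_m = \sum_m (-1)^m (m-1) L_m$. Adding the two sums, the $(-1)^m m L_m$ terms cancel and we are left with $-\sum_m (-1)^m L_m = \rho(C_*)$ by Definition~\ref{def:Betti_numbers_and_torsion_of_a_finite_Hilbert_chain_complex}. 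All sums are finite since $c_k = 0$ for $|k|$ large, so no convergence issues arise. The only step requiring genuine care — and the one I would present most carefully — is verifying that $\Delta_n$ is block-diagonal with respect to the Hodge decomposition and correctly matching each block's ${\det}^{\perp}$ against ${\det}^{\perp}(c_n)^2$ respectively ${\det}^{\perp}(c_{n+1})^2$; everything after that is bookkeeping.
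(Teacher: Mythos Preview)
Your proof is correct and follows essentially the same route as the paper: use the Hodge decomposition of Lemma~\ref{lem:Hodge_decomposition} to write $\Delta_n$ as a block-diagonal operator $(c_n^*c_n)^{\perp} \oplus (c_{n+1}c_{n+1}^*)^{\perp} \oplus 0$, apply Lemma~\ref{lem:main_properties_of_det_perp} to obtain $\ln\bigl({\det}^{\perp}(\Delta_n)\bigr) = 2\ln\bigl({\det}^{\perp}(c_n)\bigr) + 2\ln\bigl({\det}^{\perp}(c_{n+1})\bigr)$, and then reindex the alternating sum. The paper's write-up differs only in notation (it introduces $c_n^{\perp}\colon \ker(c_n)^{\perp}\to\im(c_n)$ explicitly), not in substance.
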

\begin{proof}
From  Lemma~\ref{lem:Hodge_decomposition} we obtain
an orthogonal decomposition
\begin{eqnarray*}
C_n
& = & \ker(c_n)^{\perp} \oplus\im(c_{n+1}) \oplus \ker(\Delta_n);
\\
\Delta_n
& = & ((c_n^{\perp})^*\circ c_n^{\perp})
\oplus (c_{n+1}^{\perp} \circ (c_{n+1}^{\perp})^*) \oplus 0,
\end{eqnarray*}
where $c_n^{\perp}\colon  \ker(c_n)^{\perp} \to\im(c_n)$ is the 
weak isomorphism induced by $c_n$. Now we compute using
Lemma~\ref{lem:main_properties_of_det_perp}
\begin{eqnarray*}
\lefteqn{
- \frac{1}{2}\cdot \sum_{n \in \IZ} (-1)^n \cdot n \cdot \ln({\det}^{\perp}(\Delta_n))}
& &
\\
&  = &
- \frac{1}{2}\cdot \sum_{n \in \IZ} (-1)^n \cdot n \cdot
\ln\bigl({\det}^{\perp}\bigl(((c_n^{\perp})^*\circ c_n^{\perp})
\oplus (c_{n+1}^{\perp} \circ (c_{n+1}^{\perp})^*)
\oplus 0\bigr)\bigr)
\\
&  = &
- \frac{1}{2}\cdot \sum_{n \in \IZ} (-1)^n \cdot n \cdot
\left(\ln\bigl({\det}^{\perp}((c_n^{\perp})^*\circ c_n^{\perp})\bigr)\right.
\\
& & \hspace{30mm}
+ \left.\ln\bigl({\det}^{\perp}(c_{n+1}^{\perp} \circ (c_{n+1}^{\perp})^*)\bigr)
+ \ln({\det}^{\perp}(0))\right)
\\
&   = &
- \frac{1}{2}\cdot \sum_{n \in \IZ} (-1)^n \cdot n \cdot
\left(2 \cdot \ln\bigl({\det}^{\perp}(c_n)\bigr)
+ 2 \cdot \ln\bigl({\det}^{\perp}(c_{n+1})\bigr)\right)
\\
&   = &
- \sum_{n \in \IZ} (-1)^n \cdot
\ln\bigl({\det}^{\perp}(c_n)\bigr). 
\end{eqnarray*}
\end{proof}

A basic property of the torsion is additivity, whose proof can be found 
in~\cite[Theorem~3.35~(1) on page~142]{Lueck(2002)}.

\begin{lemma}
\label{lem:additivity_of-torsion_fin_dim}
Consider the short exact sequence of finite Hilbert chain complexes $0 \to C_*
\xrightarrow{i_*} D_* \xrightarrow{p_*} E_* \to 0$.  For each $n \in \IZ$ we obtain a
finite Hilbert chain complex $E[n]_*$ concentrated in dimension $0$, $1$, and $2$ which is
given there by $C_n \xrightarrow{i_n} D_n \xrightarrow{p_n} E_n$. The long exact homology
sequence associated to $0 \to C_* \xrightarrow{i_*} D_* \xrightarrow{p_*} E_* \to 0$ can
be viewed as finite Hilbert chain complex denoted by $LHS_*$.

Then we get 
\[
\rho(C_*) - \rho(D_*) + \rho(E_*) =
\left(\sum_{n \in \IZ} (-1)^n \cdot \rho(E[n]_*)\right) - \rho(LHS_*).
\]
\end{lemma}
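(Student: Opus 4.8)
The plan is to reduce the additivity statement to a statement about a single auxiliary chain complex built from the short exact sequence, and then to evaluate the torsion of that complex using Lemma~\ref{lem:main_properties_of_det_perp} and Lemma~\ref{lem:torsion_in_terms_of_Laplacian}. The key observation is that the three complexes $C_*$, $D_*$, $E_*$ fit into a short exact sequence that is, at each level $n$, the $3$-term complex $E[n]_*$ given by $C_n \xrightarrow{i_n} D_n \xrightarrow{p_n} E_n$; and the total complex (or mapping-cone-type construction) assembled from these levelwise pieces has homology governed by the long exact sequence $LHS_*$. So the heart of the matter is a careful bookkeeping of how $\rho$ behaves under (i) taking levelwise building blocks and (ii) passing to homology.

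First I would set up notation: choose orthogonal decompositions $D_n = i_n(C_n) \oplus i_n(C_n)^{\perp}$, so that via the induced maps we may, after an isometric identification, write $D_n \cong C_n \oplus E_n'$ where $E_n' := i_n(C_n)^{\perp}$ maps isomorphically (but not isometrically) onto $E_n$ under $p_n$. With respect to these decompositions the differential $d_n \colon D_n \to D_{n-1}$ becomes upper triangular, $d_n = \begin{pmatrix} c_n & * \\ 0 & \overline{e}_n \end{pmatrix}$, where $\overline{e}_n$ is $e_n$ up to the identification $E_n' \cong E_n$. Using Lemma~\ref{lem:main_properties_of_det_perp}~\eqref{lem:main_properties_of_det_perp:composition} and~\eqref{lem:main_properties_of_det_perp:additivity} one can relate $\det^{\perp}(d_n)$ to $\det^{\perp}(c_n)$, $\det^{\perp}(e_n)$, and a correction measuring the failure of the identification $E_n' \cong E_n$ to be isometric, i.e.\ a determinant of the projection $p_n|_{E_n'} \colon E_n' \to E_n$. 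Summing the alternating logarithms, the $\det^{\perp}(c_n)$ and $\det^{\perp}(e_n)$ contributions reassemble into $\rho(C_*)$ and $\rho(E_*)$, while the correction terms reorganize precisely into $\sum_n (-1)^n \rho(E[n]_*)$ plus a term involving how the Hilbert structure on homology is distorted, which is exactly $\rho(LHS_*)$. This last identification is the step where one must invoke Lemma~\ref{lem:Hodge_decomposition} to compare the quotient Hilbert structure on $H_n$ with the one coming from harmonic representatives.

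Alternatively — and this is probably the cleaner route — I would cite the general principle that $\rho$ is a characteristic-class-like invariant: it is additive over direct sums (Lemma~\ref{lem:main_properties_of_det_perp}~\eqref{lem:main_properties_of_det_perp:direct_sums}), it is a logarithmic determinant, and for a based short exact sequence of based finite complexes the Whitehead-torsion formula $\tau(D) = \tau(C) + \tau(E)$ holds exactly; the $L^2$/Hilbert refinement replaces ``based'' by ``metrized'' and pays for the discrepancy with the torsion of the long exact sequence together with the levelwise complexes $E[n]_*$. Concretely one filters $D_*$ by the subcomplex $C_*$, writes $\rho(D_*)$ via the filtration as $\rho(C_*) + \rho(E_*) + (\text{boundary-map corrections})$, and identifies the corrections with $\bigl(\sum_n (-1)^n \rho(E[n]_*)\bigr) - \rho(LHS_*)$ by a diagram chase. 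Either way, I would keep the proof short by invoking \cite[Theorem~3.35~(1) on page~142]{Lueck(2002)}, as the paper already signals, and only sketch the reduction.

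The main obstacle I expect is the correct treatment of the Hilbert-space structures on homology. The maps $i_*$, $p_*$ are not required to be compatible with inner products, and $H_n(C_*)$, $H_n(D_*)$, $H_n(E_*)$ carry the quotient/subspace metrics coming from $C_n$, $D_n$, $E_n$ respectively. When one forms $LHS_*$ as a finite Hilbert chain complex, each term is one of these metrized homology groups, so $\rho(LHS_*)$ is sensitive to exactly those metric discrepancies. Getting the signs and the indexing shifts right — since the connecting homomorphism shifts degree by one and $E[n]_*$ is concentrated in degrees $0,1,2$ but sits over level $n$ — is the delicate point; the cleanest check is to verify the formula first on the two extreme cases, namely a split exact sequence with compatible metrics (where every correction term vanishes on the nose) and a sequence of acyclic complexes (where $LHS_* = 0$ and one is reduced to the multiplicativity of $\det^{\perp}$), and then to argue that the general case follows by combining these via the filtration argument above.
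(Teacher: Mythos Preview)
The paper does not actually prove this lemma; it simply cites \cite[Theorem~3.35~(1) on page~142]{Lueck(2002)}, and you explicitly propose to do the same. In that sense your proposal matches the paper exactly.

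That said, your accompanying sketch has a real gap worth flagging. You write the differential $d_n$ in upper-triangular form $\begin{pmatrix} c_n & * \\ 0 & \overline{e}_n \end{pmatrix}$ and then invoke Lemma~\ref{lem:main_properties_of_det_perp}~\eqref{lem:main_properties_of_det_perp:additivity} to split ${\det}^{\perp}(d_n)$. But that lemma requires the upper-left block to be \emph{surjective} and the lower-right block to be \emph{injective}, and neither $c_n$ nor $\overline{e}_n$ satisfies this in general. This is not a minor technicality: it is precisely where the homology of $C_*$, $D_*$, $E_*$ enters, and the correction terms you are hoping will ``reorganize into $\rho(LHS_*)$'' arise exactly from the failure of these hypotheses. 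The actual argument in the cited reference handles this by further decomposing each chain module using the Hodge decomposition of Lemma~\ref{lem:Hodge_decomposition} \emph{before} applying the triangular-determinant trick, so that all blocks become genuine isomorphisms on the relevant pieces; only then do the determinant identities apply cleanly and the long exact sequence emerges from the connecting pieces. Your filtration alternative is closer in spirit to this, but as written it is a restatement of the claim rather than a proof.
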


Let $f \colon C_* \to D_*$ be a chain map of finite Hilbert chain complexes. Let $\cone(f_*)$ 
be its mapping cone whose $n$-th differential is given by
\[
\begin{pmatrix} 
-c_{n-1} 
& 
0 
\\ 
f_{n-1} 
& 
d_{n}
\end{pmatrix}
\colon C_{n-1} \oplus D_n \to C_{n-1} \oplus D_{n-1}.
\]
Define the torsion of $f_*$ by
\begin{equation}
\tau(f_*) := \rho(\cone(f_*)).
\label{tau(f_ast)_fin_dim}
\end{equation}

\begin{lemma} \label{lem:homotopy_invariant_of_torsion_fin_dim}
Let $f \colon C_* \to D_*$ be a chain homotopy equivalence of finite Hilbert chain complexes. 
Then  we get
\[
\tau(f_*) = \rho(D_*) - \rho(C_*) + \sum_{n \in \IZ} (-1)^n \cdot \ln\bigl({\det}^{\perp}(H_n(f_*))\bigr).
\]
\end{lemma}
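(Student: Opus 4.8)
The plan is to deduce the formula from the additivity of torsion, Lemma~\ref{lem:additivity_of-torsion_fin_dim}, applied to the short exact sequence of finite Hilbert chain complexes
\[
0 \to D_* \xrightarrow{i_*} \cone(f_*) \xrightarrow{p_*} \Sigma C_* \to 0,
\]
where $\Sigma C_*$ is the shifted complex with $(\Sigma C_*)_n = C_{n-1}$ and differential $-c_{n-1}$, the map $i_n\colon D_n \to C_{n-1}\oplus D_n$ is the inclusion into the second summand, and $p_n\colon C_{n-1}\oplus D_n \to C_{n-1}$ is the projection onto the first summand. On the left-hand side of the additivity formula one has $\rho(\cone(f_*)) = \tau(f_*)$ by definition, together with $\rho(D_*)$ and $\rho(\Sigma C_*)$; on the right-hand side there are the contributions of the complexes $E[n]_*$ and of the long exact homology sequence $LHS_*$.

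First I would record three routine normalizations. Since $(-c_{n-1})^*(-c_{n-1}) = c_{n-1}^*c_{n-1}$, we get ${\det}^{\perp}(-c_{n-1}) = {\det}^{\perp}(c_{n-1})$, so a reindexing in Definition~\ref{def:Betti_numbers_and_torsion_of_a_finite_Hilbert_chain_complex} gives $\rho(\Sigma C_*) = -\rho(C_*)$. Next, each $E[n]_*$ is the complex $D_n \xrightarrow{i_n} C_{n-1}\oplus D_n \xrightarrow{p_n} C_{n-1}$ concentrated in degrees $2,1,0$; because $i_n^*i_n = \id_{D_n}$ and $p_np_n^* = \id_{C_{n-1}}$, the identity ${\det}^{\perp}(f) = \sqrt{{\det}^{\perp}(f^*f)} = \sqrt{{\det}^{\perp}(ff^*)}$ from Lemma~\ref{lem:main_properties_of_det_perp} forces ${\det}^{\perp}(i_n) = {\det}^{\perp}(p_n) = 1$, hence $\rho(E[n]_*) = 0$ and $\sum_n (-1)^n\rho(E[n]_*) = 0$. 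Finally, torsion is additive over direct sums of finite Hilbert chain complexes, immediately from Definition~\ref{def:Betti_numbers_and_torsion_of_a_finite_Hilbert_chain_complex} and the multiplicativity of ${\det}^{\perp}$ over direct sums in Lemma~\ref{lem:main_properties_of_det_perp}.

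The main work is the analysis of $LHS_*$. Since $f_*$ is a chain homotopy equivalence, $\cone(f_*)$ is contractible, so $H_n(\cone(f_*)) = 0$ for all $n$, and $H_n(\Sigma C_*) = H_{n-1}(C_*)$ as Hilbert spaces with their induced structures. A short diagram chase identifies the connecting homomorphism $H_n(\Sigma C_*) \to H_{n-1}(D_*)$ with $H_{n-1}(f_*)$: lift a cycle $c \in C_{n-1}$ to $(c,0) \in \cone(f_*)_n$ and apply the cone differential to get $(0,f_{n-1}(c))$. Feeding these facts into the long exact sequence collapses $LHS_*$ — in the standard indexing placing $H_n(\Sigma C_*)$, $H_n(\cone(f_*))$, $H_n(D_*)$ in degrees $3n$, $3n+1$, $3n+2$ — into a direct sum over $n$ of two-term complexes $\bigl[\, H_{n-1}(C_*) \xrightarrow{H_{n-1}(f_*)} H_{n-1}(D_*)\,\bigr]$ sitting in degrees $3n$ and $3n-1$, with all remaining chain modules equal to zero. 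Computing the torsion of each two-term complex, summing (using additivity over direct sums and $(-1)^{3n} = (-1)^n$) and reindexing gives
\[
\rho(LHS_*) = \sum_{n \in \IZ} (-1)^n \cdot \ln\bigl({\det}^{\perp}(H_n(f_*))\bigr).
\]

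Putting everything together, Lemma~\ref{lem:additivity_of-torsion_fin_dim} becomes $\rho(D_*) - \tau(f_*) - \rho(C_*) = -\rho(LHS_*)$, which rearranges to the asserted identity. I expect the only delicate step to be the bookkeeping inside $LHS_*$: pinning down the connecting map as $H_{n-1}(f_*)$ (up to a harmless sign, since ${\det}^{\perp}$ is sign-blind) and verifying that the resulting two-term summands occupy exactly the degrees claimed, so that the signs $(-1)^n$ come out correctly. Everything else reduces to the definition of $\rho$ and to the elementary properties of ${\det}^{\perp}$ collected in Section~\ref{sec:Operators_of_finite-dimensional_Hilbert_spaces}.
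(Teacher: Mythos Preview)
Your proof is correct and follows exactly the approach sketched in the paper: apply Lemma~\ref{lem:additivity_of-torsion_fin_dim} to the canonical short exact sequence $0 \to D_* \to \cone(f_*) \to \Sigma C_* \to 0$ and use $H_n(\cone(f_*)) = 0$. You have simply filled in the bookkeeping (the vanishing of $\rho(E[n]_*)$, the identification of the connecting map with $H_{n-1}(f_*)$, and the sign/degree tracking in $LHS_*$) that the paper leaves implicit.
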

\begin{proof} 
This follows from Lemma~\ref{lem:additivity_of-torsion_fin_dim} applied to the
canonical short exact sequence $0 \to D_* \to \cone(f_*) \to \Sigma C_* \to 0$
using the fact that $H_n(\cone(f_*)) = 0$ holds for $n \in \IZ$.
\end{proof}

\begin{lemma}
\label{lem:isos_of_chain_complexes-fin_dim}
Let $f_* \colon C_* \to D_*$ be a chain map of finite contractible Hilbert chain complexes
such that $f_n$ is bijective for each $n \in \IZ$. Then
\[
\rho(D_*) - \rho(C_*) = \sum_{n \in \IZ} (-1)^n \cdot \ln\bigl({\det}^{\perp}(f_n)\bigr).
\]
\end{lemma}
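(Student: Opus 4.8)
The statement to prove is Lemma~\ref{lem:isos_of_chain_complexes-fin_dim}: for a chain isomorphism $f_* \colon C_* \to D_*$ between finite contractible Hilbert chain complexes (with each $f_n$ bijective but not necessarily isometric),
\[
\rho(D_*) - \rho(C_*) = \sum_{n \in \IZ} (-1)^n \cdot \ln\bigl({\det}^{\perp}(f_n)\bigr).
\]
The natural approach is to feed this situation into Lemma~\ref{lem:homotopy_invariant_of_torsion_fin_dim}. Since $f_*$ is in particular a chain homotopy equivalence (a chain isomorphism), that lemma gives
\[
\tau(f_*) = \rho(D_*) - \rho(C_*) + \sum_{n \in \IZ} (-1)^n \cdot \ln\bigl({\det}^{\perp}(H_n(f_*))\bigr).
\]
Because $C_*$ and $D_*$ are contractible, $H_n(C_*) = H_n(D_*) = 0$ for all $n$, so $H_n(f_*)$ is the zero map $0 \to 0$; by the convention in~\eqref{det_perp(f)-finite-dimensional}, ${\det}^{\perp}(H_n(f_*)) = 1$ and $\ln 1 = 0$. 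Hence the correction term vanishes and $\tau(f_*) = \rho(D_*) - \rho(C_*)$. So the whole lemma reduces to computing $\tau(f_*) = \rho(\cone(f_*))$ and identifying it with $\sum_n (-1)^n \ln({\det}^{\perp}(f_n))$.

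The main work is therefore the direct computation of $\rho(\cone(f_*))$ when $f_*$ is a chain isomorphism. First I would recall that $\cone(f_*)_n = C_{n-1} \oplus D_n$ with differential $\begin{pmatrix} -c_{n-1} & 0 \\ f_{n-1} & d_n \end{pmatrix}$. The key observation is that because each $f_n$ is bijective, the cone is contractible and in fact one can split off the acyclic pieces. Concretely, I would exploit that $\cone(f_*)$ is chain isomorphic, via a block triangular change of basis using $f_*$, to a direct sum of elementary complexes of the form $C_{n-1} \xrightarrow{\mathrm{something}} C_{n-1}$. A clean way: since $f_*$ is an isomorphism, the map $\mathrm{id} \oplus (\text{conjugation by } f)$ lets one replace the differential's off-diagonal $f_{n-1}$ term in a way that decouples the $C$-part from the $D$-part. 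But the Hilbert-space structures are not preserved under these algebraic changes, so one cannot directly read off ${\det}^{\perp}$. The cleaner route is to use the tools already assembled: apply Lemma~\ref{lem:homotopy_invariant_of_torsion_fin_dim} to the chain homotopy equivalence $\mathrm{id}_{C_*} \colon C_* \to C_*$ as a warm-up sanity check, then use Lemma~\ref{lem:additivity_of-torsion_fin_dim} on a suitable short exact sequence of cone complexes.

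The step I expect to be the real obstacle is handling the mismatch between the algebraic splitting of $\cone(f_*)$ and the Hilbert-space structures governing ${\det}^{\perp}$. The resolution I would pursue: factor $f_*$ through the ``polar-type'' decomposition, or more simply, use that for an isomorphism $f_n$ of Hilbert spaces ${\det}^{\perp}(f_n) = |\det(f_n)|$ by Lemma~\ref{lem:main_properties_of_det_perp}\eqref{lem:main_properties_of_det_perp:auto}, and then reduce the computation of $\rho(\cone(f_*))$ to a telescoping alternating sum of terms $\ln|\det(f_n)|$ via repeated use of the block-triangular multiplicativity in Lemma~\ref{lem:main_properties_of_det_perp}\eqref{lem:main_properties_of_det_perp:additivity} and~\eqref{lem:main_properties_of_det_perp:composition}. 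Specifically I would build an explicit filtration of $\cone(f_*)$ whose subquotients are the two-term complexes $(C_{n-1} \xrightarrow{\mathrm{id}} C_{n-1})$ rescaled by $f_{n-1}$, apply additivity of torsion (Lemma~\ref{lem:additivity_of-torsion_fin_dim}) across the associated short exact sequences noting all homology groups vanish, and collect the signs: the complex $C_{n-1}$ sits in degree $n-1$ and $n$ of the cone in a way that contributes $(-1)^{n-1} \cdot \ln({\det}^{\perp}(f_{n-1}))$ with the appropriate bookkeeping, which after reindexing yields exactly $\sum_{n} (-1)^n \ln({\det}^{\perp}(f_n))$. Once the signs are checked against a one-dimensional example ($C_* = D_* = \IR$ concentrated so that $f$ is multiplication by $r$, cone $= (\IR \xrightarrow{r} \IR)$, whose torsion is $-\ln|r| \cdot (-1)^{?}$), the general case follows by the additivity machinery.
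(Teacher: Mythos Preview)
Your proposal is correct and follows essentially the same route as the paper: first reduce via Lemma~\ref{lem:homotopy_invariant_of_torsion_fin_dim} (the homology correction vanishes by contractibility), then compute $\rho(\cone(f_*))$ by peeling off one degree at a time using the additivity Lemma~\ref{lem:additivity_of-torsion_fin_dim}. The paper organizes the second step as an explicit induction on the length of $C_*$, splitting off the top-degree piece $\cone(m[f_*]) = (C_m \xrightarrow{f_m} D_m)$ at each stage, which is exactly the filtration you describe (your ``two-term complexes $(C_{n-1} \xrightarrow{\id} C_{n-1})$ rescaled by $f_{n-1}$'' should read $C_{n-1} \xrightarrow{f_{n-1}} D_{n-1}$, but that is a slip in wording, not in the argument).
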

\begin{proof}
Because of Lemma~\ref{lem:homotopy_invariant_of_torsion_fin_dim} it suffices to show
\[
\rho(\cone(f_*)) = \sum_{n \in \IZ} (-1)^n \cdot \ln\bigl({\det}^{\perp}(f_n)\bigr).
\]
This is done by induction over the length of $C_*$ which is the supremum 
$\{m-n \mid C_m \not= 0, C_n \not= 0\}$.  The induction step, when the length is less or equal 
to one, follows directly from the definitions.  The induction step is done as follows.  Let $m$ be
the largest integer with $C_m \not= 0$. Let $C_*|_{m-1}$ obtained from $C_*$ by putting $C_m =0$ 
and leaving the rest. Obviously $f_* \colon C_* \to D_*$ induces a chain isomorphism
$f_*|_{m-1} \colon C_*|_{m-1} \to D_*|_{m-1}$. Let $m[C_*]$ be the chain complex
concentrated in dimension $m$ whose $m$-th chain module is $C_m$. Obviously $f_*$ induces
a chain isomorphism $m[f_*] \colon m[C_*] \to m[D_*]$. We have the obvious short exact
sequence of finite contractible Hilbert chain complexes 
$0 \to \cone(m[f_*]) \to \cone(f_*) \to \cone([f_*|_{m-1}) \to 0$.  
Lemma~\ref{lem:additivity_of-torsion_fin_dim} implies
\[
\rho(\cone(f_*) ) = \rho(\cone(m[f_*])) + \rho\bigl(\cone(f_*|_{m-1})\bigr).
\]
The induction hypothesis applies to $m[C_*]$ and $C_*|_{m-1}$ and thus we have
\begin{eqnarray*}
\rho(\cone(m[f_*])) 
& = & 
(-1)^m \cdot \ln\bigl({\det}^{\perp}(f_m)\bigr);
\\
\rho\bigl(\cone(f_*|_{m-1})\bigr)
& = & 
\sum_{n \in \IZ, n \not= m } (-1)^n \cdot \ln\bigl({\det}^{\perp}(f_n)\bigr).
\end{eqnarray*}
This finishes the proof Lemma~\ref{lem:isos_of_chain_complexes-fin_dim}.
\end{proof}


\subsection{Torsion for finite based free $\IZ$-chain complexes}
\label{subsec:Torsion_for_finite_based_free_Z-chain_complexes}

Let $C_*$ be a finite free $\IZ$-chain complex, i.e., a $\IZ$-chain complex whose chain
modules are all finitely generated free abelian groups and for which there exists a natural
number $N$ such that $c_n = 0$ for $|n| > N$.  Given a finitely generated free
$\IZ$-module $M$, we call two $\IZ$-bases $B = \{b_1, b_2, \ldots, b_n\}$ and $B' = \{b_1',
b_2', \ldots, b_n'\}$ equivalent if there exists a permutation $\sigma \in S_n$ and elements 
$\epsilon_i \in \{\pm 1\}$ for $i = 1,2, \ldots, n$ such that $b'_{\sigma(i)} = \epsilon_i \cdot b_i$
holds for $i =1,2, \ldots, n$.  A $\IZ$-basis $B = \{b_1, b_2, \ldots , b_n\}$ on $M$
determines on $\IR \otimes_{\IZ} M$ a Hilbert space structure by requiring that $\{1
\otimes b_1, 1 \otimes b_2, \ldots, 1 \otimes b_n\}$ is an orthonormal basis. Obviously
this Hilbert space structure depends only on the equivalence class $[B]$ of $B$.

We call a $\IZ$-chain complex $C_*$ \emph{finite based free} if it is finite free and each
$C_n$ comes with an equivalence class $[B_n]$ of $\IZ$-bases. Then $\IR \otimes{\IZ}C_*$
inherits the structure of a finite Hilbert chain complex.

\begin{lemma} \label{lem:homotopy_invariance_for_Z-chain_homotopy_equivalences}
Let $C_*$ be a finite  based free contractible $\IZ$-chain complex.
Then 
\[
\rho(\IR \otimes_{\IZ} C_*) = 0.
\]
\end{lemma}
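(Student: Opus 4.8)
The statement to prove is that $\rho(\IR \otimes_{\IZ} C_*) = 0$ for any finite based free contractible $\IZ$-chain complex $C_*$. The key point is that the torsion is defined using $\det^\perp$ of the differentials of the Hilbert chain complex obtained by declaring the given $\IZ$-bases to be orthonormal, and for such a chain complex the relevant linear maps are defined over $\IZ$, so their determinants are integers — but combined with contractibility and additivity they must be $\pm 1$.

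First I would reduce to a statement about elementary matrices or a decomposition of $C_*$. Since $C_*$ is a finite based free contractible $\IZ$-chain complex, a standard fact is that $C_*$ admits a chain contraction, i.e., there are $\IZ$-maps $\gamma_n \colon C_n \to C_{n+1}$ with $c_{n+1}\gamma_n + \gamma_{n-1} c_n = \id_{C_n}$. Then $(c_* + \gamma_*) \colon C_{\mathrm{ev}} \to C_{\mathrm{odd}}$ is a $\IZ$-isomorphism between the sums of even- and odd-indexed chain modules (with respect to a suitable filtration argument, or directly: $c_* + \gamma_*$ is an isomorphism on the odd/even decomposition). This is a $\IZ$-linear isomorphism between two finitely generated free $\IZ$-modules equipped with $\IZ$-bases, hence is described by an invertible integer matrix, whose determinant is therefore $\pm 1$.

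The next step is to relate $\rho(\IR \otimes_{\IZ} C_*)$ to $\ln \det^\perp$ of this isomorphism $c_* + \gamma_*$. Here I would invoke Lemma~\ref{lem:isos_of_chain_complexes-fin_dim} and the additivity/homotopy results of the previous subsection: torsion of a finite contractible Hilbert chain complex can be computed, up to signs, as $\pm\ln\det^\perp(c_* + \gamma_*)$ by comparing $C_*$ with a ``trivial'' contractible complex built from the identity maps $\id_{C_n}$, whose torsion is visibly $0$. Concretely, the contraction $\gamma_*$ lets one write $C_*$ as a direct sum of elementary contractible pieces of the form $\cdots \to 0 \to A \xrightarrow{\id} A \to 0 \to \cdots$ after a change of basis, and $\rho$ is additive under direct sums (Lemma~\ref{lem:additivity_of-torsion_fin_dim}) and invariant in the appropriate sense under the isomorphisms of chain complexes governed by Lemma~\ref{lem:isos_of_chain_complexes-fin_dim}. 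The change-of-basis maps are $\IZ$-linear automorphisms of based free $\IZ$-modules, hence integer matrices of determinant $\pm 1$, so by Lemma~\ref{lem:main_properties_of_det_perp}\eqref{lem:main_properties_of_det_perp:auto} they contribute $\ln|\pm 1| = 0$ to the torsion.

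The main obstacle, I expect, is the bookkeeping: making precise the passage from ``$C_*$ is contractible over $\IZ$'' to ``$C_*$ is, after a based $\IZ$-change of coordinates, a direct sum of elementary complexes,'' and tracking that every comparison map used is itself defined over $\IZ$ with respect to the chosen bases, so that every $\det^\perp$ appearing is the absolute value of an integer determinant of an invertible integer matrix, hence $1$. Once that is set up, the conclusion $\rho(\IR\otimes_{\IZ} C_*) = \sum_n (-1)^n \ln|{\pm 1}| = 0$ follows from Lemma~\ref{lem:isos_of_chain_complexes-fin_dim} together with the vanishing of the torsion of the trivial model complex. An alternative, perhaps cleaner route avoiding the direct-sum decomposition: argue by induction on the length of $C_*$ exactly as in the proof of Lemma~\ref{lem:isos_of_chain_complexes-fin_dim}, splitting off the top chain module, and at each stage note that the relevant restricted differential is a split injection of based free $\IZ$-modules whose $\det^\perp$ is the absolute value of an integer determinant $\pm 1$; I would present whichever is shorter once the details are checked.
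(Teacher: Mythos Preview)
Your proposal is correct. Your second suggested route---induction on the length of $C_*$, peeling off one extremal chain module at a time and using Lemmas~\ref{lem:additivity_of-torsion_fin_dim} and~\ref{lem:isos_of_chain_complexes-fin_dim} together with the fact that every $\IZ$-automorphism of a based free $\IZ$-module has determinant $\pm 1$---is exactly what the paper does (the paper peels off the bottom degree rather than the top, but that is immaterial). Your first route, via a chain contraction $\gamma_*$ and the resulting decomposition of $C_*$ into a direct sum of elementary complexes $0 \to A \xrightarrow{\id} A \to 0$, is a genuine and classical alternative: it replaces the inductive bookkeeping by a single global chain isomorphism $f_* \colon C_* \to \bigoplus_n \bigl(\im(c_{n+1}) \xrightarrow{\id} \im(c_{n+1})\bigr)$ built from the splittings of $0 \to \im(c_{n+1}) \to C_n \to \im(c_n) \to 0$ (these split because $\im(c_n)$ is free over the PID $\IZ$), then applies Lemma~\ref{lem:isos_of_chain_complexes-fin_dim} once. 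The payoff of your first approach is conceptual clarity and a direct link to the classical formula $\tau(C_*) = [c_* + \gamma_*]$ for Whitehead torsion; the paper's inductive argument has the minor advantage of not needing to choose all the splittings at once or to equip each $\im(c_n)$ with an auxiliary basis. Either way the heart of the matter is what you identified: every comparison map is an integer matrix invertible over $\IZ$, hence contributes $\ln|\pm 1| = 0$.
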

\begin{proof}
  We use induction   over the length of $C_*$ which is the supremum $\{m-n \mid C_m \not= 0, C_n \not= 0\}$.
  The induction step, when the length is smaller than zero, is trivial since then $C_*$ is trivial. The
  induction step is done as follows.  Let $n$ be the smallest integer with $C_n  \not= 0$. 
  Then $c_{n+1} \colon C_{n+1} \to C_n$ is surjective. We can choose a map of
  $\IZ$-modules $s_n \colon C_n \to C_{n+1}$ with $c_{n+1} \circ s_n = \id_{C_n}$. Then
  the cokernel $\coker(s_n)$ is a finitely generated free and we can equip it with some
  equivalence class of $\IZ$-basis.  Let $\pr \colon C_{n+1} \to \coker(s_n)$ be the projection.
We obtain a short exact sequence of finite free $\IZ$-chain
  complexes by the following diagram
\[\xymatrix@!C=5em{\cdots \ar[r]
& 
0 \ar[r] \ar[d]
& 
0 \ar[r] \ar[d]
&
C_n \ar[r]^{\id} \ar[d]^{s_n}
&
C_n \ar[d]^{\id} 
\\
\cdots \ar[r]^{c_{n+4}} 
& 
C_{n+3} \ar[r]^{c_{n+3}} \ar[d]^{\id}
& 
C_{n+2} \ar[r]^{c_{n+2}} \ar[d]^{\id}
&
C_{n+1} \ar[r]^{c_{n+1}} \ar[d]^{\pr}
&
C_n \ar[d]
\\
\cdots \ar[r]^{c_{n+4}} 
& 
C_{n+3} \ar[r]^{c_{n+3}} 
& 
C_{n+2} \ar[r]^-{\pr \circ c_{n+2}} 
&
\coker(s_n) \ar[r]
&
0
}
\]
If we apply $\IR \otimes_{\IZ} -$ to the chain complex represented by the upper row, we
obtain a finite Hilbert chain complex with trivial torsion. The same is true by the induction hypothesis 
for the lower row since its length is smaller then the length of $C_*$.  Hence the claim follows
from Lemma~\ref{lem:additivity_of-torsion_fin_dim} if we can show the same for the
$2$-dimensional chain complex $E_*$ given in dimensions $0,1,2$ by $0 \to C_n
\xrightarrow{c_n} C_{n+1} \xrightarrow{\pr} \coker(s_n) \to 0$. Let $E_*'$ be the
$2$-dimensional chain complex $E_*$ given in dimensions $0,1,2$ by 
$0 \to C_n \to C_n \oplus \coker(s_n) \to \coker(s_n) \to 0$ where the
differentials are the obvious inclusion and projection and the $\IZ$-bases in dimension 1 is the
direct sum of the basis for $C_n$ and $\coker(s_n)$. Obviously we have $\rho(\IR
\otimes_{\IZ} E_*') = 0$. There is a $\IZ$-chain isomorphism $f_* \colon E_* \to E_*$ such
that $f_0$ and $f_2$ are the identity. We conclude from
Lemma~\ref{lem:isos_of_chain_complexes-fin_dim} that 
$\rho(\IR \otimes_{\IZ} E_*) = - \ln\bigl({\det}^{\perp}(\id_{\IR} \otimes_{\IZ} f_1)\bigr)$.  
Since $f_1$ is an isomorphism, ${\det}^{\perp}(\id_{\IR} \otimes_{\IZ} f_1)$ is the 
absolute value of the classical determinant of $\id_{\IR} \otimes_{\IZ} f_1$, which is the 
classical determinant of $f_1$ over $\IZ$ and hence $\pm 1$. This finishes the proof of
Lemma~\ref{lem:homotopy_invariance_for_Z-chain_homotopy_equivalences}.
\end{proof}

The term $\sum_{n \in \IZ} (-1)^n \cdot \ln\bigl({\det}^{\perp}(H_n(f_*))\bigr)$ 
appearing in Lemma~\ref{lem:homotopy_invariant_of_torsion_fin_dim}
causes some problems concerning homotopy
invariance as the following example shows:

\begin{example}[Subdivision for {$[0,1]$}]
\label{exa:subdivision_for_I}
Consider $I = [0,1]$. We specify a $CW$-structure on $I$ by defining the set of $0$-cells
by $\{0,1/n, 2 /n , \ldots, (n-1)/n, 1\}$ and the set of closed $1$-cells by $\{[0,1/n],
[1/n,2/n], \ldots [(n-1)/n,1]\}$ for each integer $n \ge 1$.  Denote the corresponding
$CW$-complex by $I[n]$.  The cellular $\IZ$-chain complex $C_*(I[n])$ is $1$-dimensional
and its first differential $c[n]_1 \colon \IZ^n \to \IZ^{n+1}$ is given by
\[
c[n]_1\bigl((k_1, k_2, \ldots, k_n)\bigr) =(-k_1, -k_2 + k_1, -k_3 + k_2, \ldots,  -k_n + k_{n-1}, k_n).
\]
The kernel of $c[n]_1$ is trivial and its image is the kernel
of the augmentation homomorphism $\epsilon[n] \colon \IZ^{n+1} \to \IZ, \;
(k_1, k_2, \ldots, k_{n+1}) \mapsto \sum_{i=1}^{n+1} k_i$.
In particular $H_1(C_*(I[n])) = 0$ and we get a $\IZ$-isomorphism 
\[
\overline{\epsilon[n]} \colon H_0(C_*(I[n])) \xrightarrow{\cong} \IZ
\]
induced by $\epsilon[n]$.
The Laplace operator $\Delta[n]_1 \colon \IR^n \to\IR^n$ in degree $1$ is given by the matrix
\[
A[n] =\begin{pmatrix}  
2 & -1   & 0 & 0 & \ldots & 0 & 0 & 0
\\
-1 & 2 & -1  & 0 & \ldots & 0 & 0 & 0
\\
0 & -1 & 2 & -1  & \ldots & 0 & 0 & 0
\\
0 & 0 & -1 & 2 &    \ldots & 0 & 0 & 0
\\
\vdots & \vdots & \vdots & \vdots & \ddots & \vdots & \vdots & \vdots
\\
0 & 0 & 0 & 0 & \ldots & 2 & -1 & 0
\\
0 & 0 & 0 & 0 & \ldots & -1 & 2 & -1 
\\
0 & 0 & 0 & 0 & \ldots & 0 & -1 & 2 
\end{pmatrix}
\]
By developing along the first row we get for its classical determinant for $n \ge 4$
\[
\det(A[n]) = 2 \cdot \det(A[n-1]) - \det(A[n-2]).
\]
A direct computation shows $\det(A[1]) = 2$, $\det(A[2]) = 3$ and $\det(A[3]) = 4$. This implies
$\det(A[n])  = n+1$ for all $n \ge 1$. Hence we get from Lemma~\ref{lem:torsion_in_terms_of_Laplacian}
\begin{eqnarray}
\rho(C_*(I[n])) 
& = & 
- \frac{1}{2} \cdot (-1)^{-1} \cdot 1 \cdot \ln\bigl({\det}^{\perp}(\Delta[n]_1)\bigr) 
\label{rho(C(I(n))}
\\
& = &
\frac{\ln\bigl(|{\det}(\Delta[n]_1)|\bigr))}{2}
\nonumber
\\ 
& = & \frac{\ln(n+1)}{2}.
\nonumber
\end{eqnarray}
This shows that $\rho(C_*(I[n])$ depends on the $CW$-structure. 

We have the chain map
$f \colon I[1] \to I[n]$ given by 
\begin{align*}
&f_1 \colon \IZ  \to  \IZ^n, \quad k \mapsto (k,k, \ldots , k);
\\
& f_0 \colon \IZ^2  \to  \IZ^{n+1} \quad  (k_1,k_2) \mapsto (k_1, 0,0,0 \ldots, k_2).
\end{align*}
It induces an isomorphism in homology since $\epsilon[n] \circ C_0(f_*) = \epsilon[1]$ holds.
Hence it is a $\IZ$-chain homotopy equivalence.
We conclude from Lemma~\ref{lem:homotopy_invariance_for_Z-chain_homotopy_equivalences}
\begin{eqnarray}
\rho\bigl(\cone(\id_{\IR} \otimes_{\IZ} C_*(f))\bigr) & = & 0.
\label{rho(cone(f(n))_is_0}
\end{eqnarray}
The isomorphism $\overline{\epsilon[n]} \colon H_0(C_*(I[n]) \xrightarrow{\cong} \IZ$
induces an explicite isomorphism
\[
\alpha[n] \colon H_0(\IR \otimes_{\IZ} C_*(I[n])) \xrightarrow{\cong} \IR \otimes_{\IZ} H_0(C_*(I[n]))
\xrightarrow{\id_{\IR} \otimes_{\IZ} \overline{\epsilon[n]}} \IR \otimes_{\IZ} \IZ \xrightarrow{\cong} \IR.
\]
Recall that $H_0(\IR \otimes_{\IZ} C_*(I[n]))$ inherits a Hilbert space structure. Then
$\alpha$ becomes an isometric isomorphism of Hilbert spaces if we equip $\IR$ with
the Hilbert space structure for which $1 \in \IR$ has norm $(n+1)^{-1/2}$, since the
element $(1,1, \ldots ,1) \in \IR \otimes_{\IZ} C_0(I[n]) = \IR^{n+1}$ belongs to 
$\ker(\id \otimes_{\IZ} \epsilon[n])^{\perp}$, has norm $\sqrt{n+1}$ and its class in 
$H_0(\IR \otimes_{\IZ} C_*(I[n])$ is sent  to $(n+1)$ under $\alpha[n]$. Since 
$\alpha[n] \circ H_0(f_*) = \alpha[1]$, we conclude
\begin{eqnarray}
{\ln\bigl((\det}^{\perp}(H_0(f))\bigr) = - \frac{\ln(n+1)}{2}.
\label{ln(det_perp(H_0f(n))}
\end{eqnarray}
Notice that~\eqref{rho(C(I(n))}, \eqref{rho(cone(f(n))_is_0}, and~\eqref{ln(det_perp(H_0f(n))}
are compatible with Lemma~\ref{lem:homotopy_invariant_of_torsion_fin_dim}.
\end{example}

Of course it cannot be desirable that $\rho(C_*(I[n]))$ in the
Example~\ref{exa:subdivision_for_I} depends on the $CW$-structure. This dependency is only
due to the dependency of the Hilbert structure on the homology on the
$CW$-structure. Therefore we can get rid of the dependency by fixing a Hilbert space
structure on the homology and view this as an extra piece of data. 

\begin{definition}[Torsion for finite based free $\IZ$-chain complex with a given Hilbert
  structure on homology]
  \label{def:Torsion_for_finite_free_Z-chain_complex_with_a_given_Hilbert_structure_on_homology}
  Let $C_*$ be a finite based free $\IZ$-chain complex. A \emph{Hilbert space structure
    $\kappa$} on $H_*(\IR \otimes_{\IZ} C_*)$  is a choice of Hilbert space structure $\kappa_n$ 
on each vector space   $H_n(\IR \otimes_{\IZ} C_*)$.  We define
\begin{multline*}
\rho(C_*;\kappa) := \rho(\IR \otimes_{\IZ} C_*) + 
\\
\sum_{n \in \IZ} (-1)^n \cdot \ln\bigl({\det}^{\perp}\bigl(\id \colon H_n(\IR \otimes_{\IZ} C_*) 
\to (H_n(\IR \otimes_{\IZ} C_*),\kappa(C_*)_n)\bigr)\bigr),
\end{multline*}
where on the source of 
$\id \colon H_n(\IR \otimes_{\IZ} C_*) \to (H_n(\IR \otimes_{\IZ} C_*),\kappa(C_*)_n)$ 
we use the Hilbert space structure induced by the one on $\IR \otimes_{\IZ} C_*$.
\end{definition}
If we take $\kappa$ to be the Hilbert space structure induced by the one on $\IR \otimes_{\IZ} C_*$, 
then obviously $\rho(C_*;\kappa)$ agrees with $\rho(\IR \otimes_{\IZ}
C_*)$. The desired effect  is the following  version of  homotopy invariance.

\begin{lemma}\label{lem:homotopy_invariance_of_rho(C_ast,kappa)}
  Let $f_* \colon C_* \to D_*$ be a $\IZ$-chain homotopy equivalence of finite based free
  $\IZ$-chain complexes.  Let $\kappa(C_*)$ and $\kappa(D_*)$ be Hilbert space structures
  on $H_*(\IR \otimes_{\IZ} C_*)$ and $H_*(\IR \otimes_{\IZ} D_*)$. Then we get
\begin{multline*}
\rho(D_*,\kappa(D_*)) - \rho(C_*,\kappa(C_*))
\\
=
\sum_{n \in \IZ} (-1)^n \cdot \ln\bigl({\det}^{\perp}\bigl(H_n(\id_{\IR} \otimes_{\IZ} f_*) \colon 
(H_n(\IR \otimes_{\IZ} C_*),\kappa(C_*)_n)
\\
\to  (H_n(\IR \otimes_{\IZ}  D_*),\kappa(D_*)_n)\bigr)\bigr).
\end{multline*}
\end{lemma}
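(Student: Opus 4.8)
The plan is to reduce the statement to the two tools already established: the homotopy invariance formula for the torsion of finite Hilbert chain complexes, Lemma~\ref{lem:homotopy_invariant_of_torsion_fin_dim}, and the multiplicativity of ${\det}^{\perp}$ under composition, Lemma~\ref{lem:main_properties_of_det_perp}~\eqref{lem:main_properties_of_det_perp:composition}. The key observation is that $\rho(C_*;\kappa(C_*))$ is, by Definition~\ref{def:Torsion_for_finite_free_Z-chain_complex_with_a_given_Hilbert_structure_on_homology}, obtained from $\rho(\IR\otimes_{\IZ}C_*)$ by adding a correction term built from the ${\det}^{\perp}$ of the identity maps $\id\colon H_n(\IR\otimes_{\IZ}C_*)\to (H_n(\IR\otimes_{\IZ}C_*),\kappa(C_*)_n)$, and similarly for $D_*$. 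So I would write out
\[
\rho(D_*;\kappa(D_*))-\rho(C_*;\kappa(C_*))
= \bigl(\rho(\IR\otimes_{\IZ}D_*)-\rho(\IR\otimes_{\IZ}C_*)\bigr) + (\text{correction terms}).
\]

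Next I would apply Lemma~\ref{lem:homotopy_invariant_of_torsion_fin_dim} to the chain homotopy equivalence $\id_{\IR}\otimes_{\IZ}f_*\colon \IR\otimes_{\IZ}C_*\to\IR\otimes_{\IZ}D_*$ of finite Hilbert chain complexes, which gives
\[
\rho(\IR\otimes_{\IZ}D_*)-\rho(\IR\otimes_{\IZ}C_*)
= -\sum_{n\in\IZ}(-1)^n\cdot\ln\bigl({\det}^{\perp}(H_n(\id_{\IR}\otimes_{\IZ}f_*))\bigr) + \tau((\id_{\IR}\otimes_{\IZ}f)_*),
\]
and then I would note that since $f_*$ is a chain homotopy equivalence its mapping cone is contractible, and more: $\IR\otimes_{\IZ}\cone(f_*)$ is the base-change of a finite based free contractible $\IZ$-chain complex (with the direct-sum basis), so $\tau((\id_{\IR}\otimes_{\IZ}f)_*)=\rho(\IR\otimes_{\IZ}\cone(f_*))=0$ by Lemma~\ref{lem:homotopy_invariance_for_Z-chain_homotopy_equivalences}. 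Combining, $\rho(\IR\otimes_{\IZ}D_*)-\rho(\IR\otimes_{\IZ}C_*) = -\sum_n(-1)^n\ln({\det}^{\perp}(H_n(\id_{\IR}\otimes_{\IZ}f_*)))$, where here $H_n(\id_{\IR}\otimes_{\IZ}f_*)$ is viewed with the Hilbert structures \emph{induced} from $\IR\otimes_{\IZ}C_*$ and $\IR\otimes_{\IZ}D_*$.

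The final step is purely a bookkeeping computation with ${\det}^{\perp}$. For each $n$, consider the commuting triangle of vector-space isomorphisms in which the maps are: the induced map $H_n(\id_{\IR}\otimes_{\IZ}f_*)$ between the induced-Hilbert-structure homology spaces, the identity $\id\colon H_n(\IR\otimes_{\IZ}C_*)\to(H_n(\IR\otimes_{\IZ}C_*),\kappa(C_*)_n)$, the identity on the $D_*$ side, and the map $H_n(\id_{\IR}\otimes_{\IZ}f_*)$ regarded as a map $(H_n(\IR\otimes_{\IZ}C_*),\kappa(C_*)_n)\to(H_n(\IR\otimes_{\IZ}D_*),\kappa(D_*)_n)$, which is exactly the map appearing in the statement. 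Since all four maps are isomorphisms, Lemma~\ref{lem:main_properties_of_det_perp}~\eqref{lem:main_properties_of_det_perp:auto} and~\eqref{lem:main_properties_of_det_perp:composition} turn the triangle into the multiplicative relation ${\det}^{\perp}(\text{bottom}) \cdot {\det}^{\perp}(\text{left }\id) = {\det}^{\perp}(\text{right }\id) \cdot {\det}^{\perp}(H_n(\id_{\IR}\otimes_{\IZ}f_*)_{\mathrm{ind}})$, i.e. after taking logarithms, $\ln{\det}^{\perp}(H_n(f_*)_{\kappa}) = \ln{\det}^{\perp}(H_n(f_*)_{\mathrm{ind}}) + \ln{\det}^{\perp}(\id_{D_*,n}) - \ln{\det}^{\perp}(\id_{C_*,n})$. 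Multiplying by $(-1)^n$, summing over $n$, and substituting into the expression from the previous paragraph, the induced-structure terms cancel against $\rho(\IR\otimes_{\IZ}D_*)-\rho(\IR\otimes_{\IZ}C_*)$ and the $\id$-terms reassemble precisely into the two correction terms of $\rho(D_*;\kappa(D_*))$ and $\rho(C_*;\kappa(C_*))$, yielding the claimed formula.

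I do not expect any serious obstacle: the only point requiring a little care is making sure the various Hilbert structures on each $H_n$ are matched up correctly when invoking functoriality of ${\det}^{\perp}$ — i.e. tracking which copy of $H_n$ carries the induced structure and which carries $\kappa_n$ — and checking that $\IR\otimes_{\IZ}\cone(f_*)$ genuinely is the complexification of a based free contractible $\IZ$-complex so that Lemma~\ref{lem:homotopy_invariance_for_Z-chain_homotopy_equivalences} applies to kill the mapping-cone torsion. Both are routine.
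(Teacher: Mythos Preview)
Your proposal is correct and follows essentially the same route as the paper's own proof: first combine Lemma~\ref{lem:homotopy_invariant_of_torsion_fin_dim} with Lemma~\ref{lem:homotopy_invariance_for_Z-chain_homotopy_equivalences} (the mapping cone of $f_*$ is a finite based free contractible $\IZ$-chain complex, so its torsion vanishes) to express $\rho(\IR\otimes_{\IZ}D_*)-\rho(\IR\otimes_{\IZ}C_*)$ in terms of ${\det}^{\perp}$ of the homology maps with the \emph{induced} Hilbert structures, and then use the multiplicativity of ${\det}^{\perp}$ under composition, Lemma~\ref{lem:main_properties_of_det_perp}~\eqref{lem:main_properties_of_det_perp:composition}, applied to the two factorizations $\id_{D,n}\circ H_n(f_*)_{\mathrm{ind}} = H_n(f_*)_{\kappa}\circ \id_{C,n}$ to convert to the given structures $\kappa(C_*)_n$, $\kappa(D_*)_n$. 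The paper's proof is exactly this, only more tersely written.
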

\begin{proof} We get from Lemma~\ref{lem:homotopy_invariant_of_torsion_fin_dim} and
Lemma~\ref{lem:homotopy_invariance_for_Z-chain_homotopy_equivalences}
\begin{multline*}
\rho(\IR \otimes_{\IZ} D_*)  - \rho(\IR \otimes_{\IZ}  C_*) 
\\
= \sum_{n \in \IZ} (-1)^n \cdot \ln\bigl({\det}^{\perp}\bigl(H_n(\IR \otimes_{\IZ}  f_*) \colon 
H_n(\IR \otimes_{\IZ}  C_*) \to  H_n(\IR \otimes_{\IZ}  D_*)\bigr)\bigr).
\end{multline*}
Hence it suffices to show for each $n \in \IZ$
\begin{multline*}
{\det}^{\perp} (\id \colon H_n(\IR \otimes_{\IZ} C_*)  \to (H_n(\IR \otimes_{\IZ} C_*),\kappa(C_*)_n)\bigr) 
\\
\cdot
{\det}^{\perp}\bigl(H_n(\id_{\IR} \otimes_{\IZ} f_*) \colon  (H_n(\IR \otimes_{\IZ} C_*),\kappa(C_*)_n) 
\to  (H_n(\IR \otimes_{\IZ}  D_*),\kappa(D_*)_n)\bigr)
\\ 
= 
{\det}^{\perp}\bigl(H_n(\id_{\IR} \otimes_{\IZ} f_*) \colon  H_n(\IR \otimes_{\IZ} C_*) 
\to  H_n(\IR \otimes_{\IZ}  D_*)\bigr)
\\
\cdot
{\det}^{\perp} \bigl(\id \colon H_n(\IR \otimes_{\IZ} D_*)  \to (H_n(\IR \otimes_{\IZ} D_*),\kappa(D_*)_n)\bigr).
\end{multline*}
This follows from 
Lemma~\ref{lem:main_properties_of_det_perp}~\eqref{lem:main_properties_of_det_perp:composition}.
\end{proof}

\begin{example}[Integral Hilbert structure]
\label{exa:Integral_Hilbert_structure}
Let $C_*$ be a finite based free $\IZ$-chain complex. Choose for each integer $n$ a
$\IZ$-basis $B_n$ for $H_n(C_*)/\tors(H_n(C_*))$. Then we get an induced Hilbert structure
$\kappa[B_*]$ on $H_n(\IR \otimes_{\IZ} C_*)$ as follows.  Obviously $B_n$ induces an
$\IR$-basis on $\IR \otimes_{\IZ} H_n(C_*)/\tors(H_n(C_*))$. There is a canonical isomorphism
\[
\IR \otimes_{\IZ} H_n(C_*)/\tors(H_n(C_*)) \xrightarrow{\cong} H_n(\IR \otimes_{\IZ} C_*)
\]
We equip the target with the Hilbert space structure $\kappa(B_n)$ for which it
becomes an isometric isomorphism.

Now consider a chain homotopy equivalence $f_* \colon C_* \to D_*$ of finite based free
chain complexes.  Suppose that we have chosen $\IZ$-basis $B_n$ on $H_*(C_*)$ and $B'_n$
on $H_n(D_*)$.  Notice that $H_n(f)$ induces an isomorphism of $\IZ$-modules 
\[
H_n(C_*)/ \tors(H_n(C_*)) \xrightarrow{\cong} H_n(D_*)/ \tors(H_n(D_*))
\]
The determinant of it with respect to the given integral bases is $\pm 1$. One easily checks that
this implies
\[
{\det}^{\perp}\bigl(H_n(\id_{\IR} \otimes_{\IZ}  f_*) \colon (H_n(\IR\otimes_{\IZ}  C_*),\kappa(B_n)) 
\to (H_n(\IR \otimes_{\IZ}  D_*),\kappa(B'_n)\bigr) = 1.
\]
Lemma~\ref{lem:homotopy_invariance_of_rho(C_ast,kappa)} implies
\[
\rho(C_*;\kappa(B_*)) = \rho(D_*;\kappa(B'_*)).
\]
Hence $\rho(C_*;\kappa(B_*))$ is independent of the choice of integral basis on $C_*$,
$D_*$, $H_n(C_*)$, and $H_n(D_*)$ and is a homotopy invariant of the underlying finite
free $\IZ$-chain complexes $C_*$ and $D_*$. This raises the question what it is?

We leave it to the reader to figure out 
\[
\rho(C_*;\kappa(B_*)) = \sum_{n \in \IZ} (-1)^n \cdot \ln\bigl(|\tors(H_n(C_*))|\bigr).
\]
The proof is straightforward after one has shown using the fact $\IZ$ is a principal ideal
domain that $C_*$ is homotopy equivalent to a direct sum of $\IZ$-chain complexes each of
which is concentrated in two consecutive dimensions and given there by $m \cdot \IZ \to
\IZ$ for some integer $m \in \IZ$.
\end{example}


\typeout{------------------------ Section 3:  The Hodge de Rham Theorem  --------------------------}

\section{The Hodge de Rham Theorem}
\label{sec:The_Hodge_deRhamTheorem}

Next we want to give a first classical relation between topology and analysis, the de Rham
Theorem and the Hodge-de Rham Theorem.


\subsection{The de Rham Theorem}
\label{subsec:The_deRham_Theorem}

Let $M$ be a (not necessarily compact) manifold (possibly with boundary).

The \emph{de Rham complex}
$(\Omega^*(M),d^*)$ is the real cochain complex whose $n$-th chain module is the real
vector space of smooth $n$-forms on $M$ and whose  $n$-differential is the standard
differential for $n$-forms. The \emph{de Rham cohomology} of $M$ is defined by
\begin{equation}
H^n_{\dR}(M) := \ker(d^n)/\im(d^{n-1}).
\label{deRham_cohomology}
\end{equation}
There is a $\IR$-chain  map, natural in $M$,
\[
A^*(M) \colon \Omega^*(M) \to C^*_{\sing,C^{\infty}}(M;\IR) 
\]
with the cochain complex of $M$ based on smooth singular simplices with coefficients in $\IR$ as
target. It sends an $n$-form $\omega \in \Omega^n(M)$ to the element 
$A^n(\omega) \in C^n_{\sing,C^{\infty}}(;\IR))$ which assigns to a smooth singular $n$-simplex 
$\sigma \colon \Delta_n \to M$  the real number $\int_{\Delta_n} \sigma^*\omega$.  
The Theorem of Stokes implies that this is a chain map.  There is a forgetful chain map 
\[
C^*_{\sing;C^{\infty}}(M;\IR) \to C^*_{\sing}(M;\IR)
\]
to the standard singular $\IR$-cochain complex, which is based on (continuous) singular
simplices with coefficients in $\IR$. Denote by $H^*_{\sing,C^{\infty}}(M;\IR)$ the smooth
singular cohomology of $M$ with coefficients in $\IR$ which is by definition the
cohomology of the $\IR$-cochain complex $C^*_{\sing;C^{\infty}}(M;\IR)$, and define analogously
$H^*_{\sing}(M;\IR)$.  A proof of the next theorem, at least in the case $\partial M = \emptyset$,
can be found  for instance 
in~\cite[Section~V.9.]{Bredon(1997a)},~\cite{deRham(1984)},~%
\cite[Theorem~1.5 on page~11 and Theorem~2.4 on page~20]{Dupont(1978)},~%
\cite[Section~15]{Lueck(2005algtop)},~\cite[Theorem~A.31 on page~413]{Massey(1991)}.

\begin{theorem}[De Rham Theorem]
\label{the:De_Rham_Theorem}
The chain map $A^*$ induces for a smooth manifold $M$ 
and $n \ge 0$ an isomorphism, natural in $M$,
\[
H^n(A^*(M)) \colon H^*_{\dR}(M) \xrightarrow{\cong} H^n_{\sing,C^{\infty}}(M;\IR).
\]
The forgetful chain map induces an isomorphism, natural in $M$,
\[
H^n_{\sing,C^{\infty}}(M;\IR) \xrightarrow{\cong} H^n_{\sing}(M;\IR).
\]
They are compatible with the multiplicative structures
given by the $\wedge$-product and the $\cup$-product.
\end{theorem}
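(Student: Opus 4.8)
The plan is to deduce the two cohomology isomorphisms from the Poincar\'e Lemma by a Mayer--Vietoris bootstrap, and to treat the multiplicative compatibility afterwards. First I would settle the local case. If $U$ is a convex open subset of $\IR^n$, or of the half-space $\IR^n_{\ge 0}$ in order to cover the boundary case, then the Poincar\'e Lemma gives $\im(d^{n-1}) = \ker(d^n)$ for $n \ge 1$, while $\ker(d^0)$ is spanned by the constant function $1$; hence $H^n_{\dR}(U)$ equals $\IR$ for $n = 0$ and vanishes for $n \ge 1$. Since $U$ is contractible, homotopy invariance of $C^*_{\sing,C^{\infty}}(-;\IR)$ and of $C^*_{\sing}(-;\IR)$ --- proved by the usual prism chain homotopy operator, which stays within the smooth category --- gives the same answer for $H^n_{\sing,C^{\infty}}(U;\IR)$ and $H^n_{\sing}(U;\IR)$. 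Evaluating $A^0$ and the forgetful map on the constant $0$-form $1$ then shows that $H^n(A^*(U))$ and the forgetful map are isomorphisms for every such $U$.

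Next I would run the bootstrap. For a decomposition $M = U \cup V$ into open subsets, each of $H^*_{\dR}$, $H^*_{\sing,C^{\infty}}(-;\IR)$ and $H^*_{\sing}(-;\IR)$ fits into a long exact Mayer--Vietoris sequence --- on the de Rham side via a smooth partition of unity subordinate to $\{U,V\}$, on the singular side via the usual small-simplices and excision argument carried out with smooth simplices --- and naturality of $A^*$ and of the forgetful map turns these into a commutative ladder of Mayer--Vietoris sequences. By the five lemma, if the relevant maps are isomorphisms for $U$, $V$ and $U \cap V$, they are isomorphisms for $U \cup V$. Inducting on the cardinality of a finite good cover, that is a cover all of whose nonempty finite intersections are convex in the above sense, then proves the theorem, naturality included, for every manifold admitting a finite good cover, in particular every compact one. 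For a general manifold one reduces to this finite-type case by a direct limit argument: de Rham, smooth singular and singular cohomology all commute with directed colimits of open submanifolds --- for forms because $d$ does, for singular cohomology because every compact set, hence every singular simplex, lies in a single stage --- so one concludes via a $\varprojlim^1$-exact sequence, or equivalently by comparing the \v{C}ech--de Rham and \v{C}ech--singular double complexes associated with a good cover. This also yields the second isomorphism; alternatively one quotes that the inclusion of smooth into all singular chains is a chain homotopy equivalence via a smoothing operator.

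It remains to address the multiplicative compatibility. At the cochain level $A^*$ does not send $\wedge$ to $\cup$ literally --- the wedge product is graded commutative while the Alexander--Whitney cup product is not --- so I would instead show that $A^*$ intertwines the two products up to natural chain homotopy, which already suffices on cohomology. One standard route writes both products as the pullback along the diagonal of an exterior product, $\Omega^*(M) \otimes \Omega^*(M) \to \Omega^*(M \times M)$ respectively $C^*(M) \otimes C^*(M) \to C^*(M \times M)$, and reduces the needed homotopy to the Eilenberg--Zilber theorem on the singular side together with Fubini's theorem on the form side; compatibility of the forgetful map with $\cup$ is immediate, since the Alexander--Whitney approximation is already defined on all singular chains. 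I expect this multiplicative step, together with the care needed to make the Mayer--Vietoris bootstrap go through for non-compact manifolds, to be the main obstacles; the remainder is the Poincar\'e Lemma, partitions of unity, and the five lemma.
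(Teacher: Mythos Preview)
The paper does not give its own proof of this theorem; it simply refers the reader to standard sources (Bredon, de~Rham, Dupont, L\"uck, Massey). Your sketch is correct and follows exactly the classical route found in those references: Poincar\'e Lemma for convex opens, Mayer--Vietoris plus the five lemma to propagate along a good cover, a limit argument for the non-compact case, and an Eilenberg--Zilber-type argument for the multiplicative statement. There is nothing to compare against.

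One small wording issue: you write that the three cohomology theories ``commute with directed colimits of open submanifolds'' and then immediately invoke a $\varprojlim^1$-sequence. Cohomology is contravariant, so an increasing union of opens gives an inverse system on cochains, and what you actually use is the Milnor exact sequence
\[
0 \to {\varprojlim}^1 H^{n-1}(U_i) \to H^n(M) \to \varprojlim H^n(U_i) \to 0
\]
for each of the three theories, together with the five lemma on the resulting ladder. Your parenthetical justifications (``$d$ does'', ``every simplex lies in a single stage'') are really statements about the cochain complexes being inverse limits, not about cohomology commuting with colimits. You clearly have the right picture, but tighten the phrasing when you write this up.
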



\subsection{The Hodge-de Rham Theorem}
\label{subsec:The_Hodge_deRham_Theorem}

Now suppose that the smooth  manifold $M$ 
comes with a Riemannian metric and an orientation. Let $d$ be the
dimension of $M$. Denote by
\begin{eqnarray}
\ast^n \colon  \Omega^n(M) & \rightarrow & \Omega^{d-n}(M)
\label{Hodge_star-operator}
\end{eqnarray}
the \emph{Hodge star-operator} which is defined by the corresponding notion for oriented
finite-dimensional Hilbert spaces applied fiberwise.  It is uniquely characterized by the
property
\begin{eqnarray}
\int_M \omega \wedge \ast^n\eta
& = &
\int_M \langle \omega_x, \eta_x\rangle_{\Alt^n(T_xM)} \dvol,
\label{characterization_of_the_Hodge_star_operator}
\end{eqnarray}
where $\omega$ and $\eta$ are $n$-forms, $\omega$ has compact support,
and $\langle \omega_x,
\eta_x\rangle_{\Alt^n(T_xM)}$ is the inner product on $\Alt^n(T_xM)$ which is induced by
the inner product on $T_xM$ given by the Riemannian metric.

Define the \emph{adjoint of the exterior differential}
\begin{eqnarray}
\delta^n
= (-1)^{dn + d + 1} \cdot *^{d-n+1} \circ \,  d^{d-n} \circ *^n\colon  \Omega^n(M)
& \rightarrow &
\Omega^{n-1}(M).
\label{adjoint_of_the_exterior_differential}
\end{eqnarray}
Notice that in the definition of $\delta^n$ the Hodge star-operator appears twice and the
definition is local. Hence we can define $\delta^n$ without using an orientation of $M$,
only the Riemannian metric is needed.  This is also true for the following definition.

\begin{definition}[Laplace operator]
\label{def:Laplace_operator}
Define the \emph{$n$-th Laplace operator} on the  Riemannian manifold $M$ 
\[
\Delta_n
 = d^{n-1}\circ \delta^n + \delta^{n+1} \circ d^n\colon  \Omega^n(M) \to \Omega^n(M). 
\]
\end{definition}

Let $\Omega_c^n(M) \subset \Omega^n(M)$
be the \emph{space of smooth $p$-forms with compact support}.
There is the following inner product on it
\begin{eqnarray}
\langle \omega, \eta\rangle_{L^2}
& := &
\int_M \omega \wedge *^n\eta  = 
\int_M \langle \omega_x , \eta_x\rangle_{\Alt^n(T_xM)} \dvol.
\label{inner_L2-product_on_space_of_smooth_forms_with_compact_support}
\end{eqnarray}

Recall that a Riemannian manifold $M$ is \emph{complete} if each path component of $M$
equipped with the metric induced by the Riemannian metric is a complete metric space.  By
the Hopf-Rinow Theorem the following statements are equivalent provided that $M$ has no
boundary: (1) $M$ is complete, (2) the exponential map is defined for any point $x \in M$
everywhere on $T_xM$, (3) any geodesic of $M$ can be extended to a geodesic defined on
$\IR$, see~\cite[page~94 and~95]{Gallot-Hulin-Lafontaine(1987)}.  Completeness enters in
a crucial way, namely, it will allow us to integrate by parts~\cite{Gaffney(1954)}.

\begin{lemma} \label{lem:integration_by_parts_on_complete_manifolds}
Let $M$ be a complete Riemannian manifold.
Let $\omega \in \Omega^n(M)$ and $\eta \in \Omega^{n+1}(M)$ be smooth forms
such that $\omega$, $d^n\omega$, $\eta$ and $\delta^{n+1}\eta$
are square-integrable.
Then
\[
\langle d^n\omega, \eta\rangle_{L^2}  - 
\langle \omega, \delta^{n+1}\eta\rangle_{L^2} =
\int_{\partial M} (\omega \wedge \ast^{n+1} \eta)|_{\partial M}.
\]
\end{lemma}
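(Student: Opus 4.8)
\textbf{Proof plan for Lemma~\ref{lem:integration_by_parts_on_complete_manifolds}.}
The plan is to reduce the stated integration-by-parts formula to the classical Stokes theorem applied to the $(d-1)$-form $\omega \wedge \ast^{n+1}\eta$, and to control the behaviour at infinity by exhausting $M$ with compact pieces and using the square-integrability hypotheses. First I would record the pointwise/local identity: for $\omega \in \Omega^n(M)$ and $\eta \in \Omega^{n+1}(M)$ one has
\[
d\bigl(\omega \wedge \ast^{n+1}\eta\bigr) = d^n\omega \wedge \ast^{n+1}\eta + (-1)^n\, \omega \wedge d^{d-n-1}\bigl(\ast^{n+1}\eta\bigr),
\]
and then rewrite the second term using the definition~\eqref{adjoint_of_the_exterior_differential} of $\delta^{n+1}$ together with the fact that $\ast^{d-n+1}\circ \ast^n = (-1)^{n(d-n)}\id$ on $n$-forms, so that $\omega \wedge d^{d-n-1}(\ast^{n+1}\eta) = \pm\, \omega \wedge \ast^n(\delta^{n+1}\eta)$ with the sign matching the normalisation in~\eqref{adjoint_of_the_exterior_differential}; hence, by~\eqref{inner_L2-product_on_space_of_smooth_forms_with_compact_support}, pointwise
\[
d\bigl(\omega \wedge \ast^{n+1}\eta\bigr) = \langle d^n\omega_x, \eta_x\rangle\dvol - \langle \omega_x, \delta^{n+1}\eta_x\rangle\dvol.
\]
This is the only place orientation-type bookkeeping enters, and it is a routine fiberwise computation in oriented finite-dimensional Hilbert spaces.

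Next I would deal with the global integration. If $M$ were compact the result is immediate from Stokes' theorem. In general, using completeness I would choose, via the Hopf--Rinow theorem, a proper smooth exhaustion function (e.g. a smoothing of the distance to a fixed point) and pick a sequence of cutoff functions $\phi_k \in C_c^\infty(M)$ with $0 \le \phi_k \le 1$, $\phi_k \equiv 1$ on a compact set exhausting $M$, and $\|d\phi_k\|_\infty \to 0$ (here completeness is what guarantees the distance function is proper and Lipschitz so such $\phi_k$ exist). Applying Stokes to $\phi_k\,(\omega \wedge \ast^{n+1}\eta)$ on a compact domain containing its support and letting $k \to \infty$, the boundary term on $\partial M$ converges to $\int_{\partial M}(\omega\wedge \ast^{n+1}\eta)|_{\partial M}$, while the interior term splits into $\int_M \phi_k\,d(\omega\wedge\ast^{n+1}\eta)$, which converges to $\langle d^n\omega,\eta\rangle_{L^2} - \langle\omega,\delta^{n+1}\eta\rangle_{L^2}$ by dominated convergence using the $L^2$-hypotheses and Cauchy--Schwarz, plus an error term $\int_M d\phi_k \wedge \omega \wedge \ast^{n+1}\eta$ whose absolute value is bounded by $\|d\phi_k\|_\infty\cdot\|\omega\|_{L^2}\cdot\|\eta\|_{L^2}$ and hence tends to $0$.

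The main obstacle is precisely this last point: showing the cutoff error $\int_M d\phi_k \wedge \omega \wedge \ast^{n+1}\eta$ vanishes in the limit. This is where completeness is essential and where one must be slightly careful, since a priori $d\phi_k$ need not have norm tending to zero for an arbitrary exhaustion; the trick (due to Gaffney) is to replace $\phi_k$ by $\psi(r/k)$ for a fixed $\psi$ and the distance function $r$, so that $\|d\phi_k\|_\infty \le \|\psi'\|_\infty/k$, and to invoke the pointwise bound $|d\phi_k \wedge \omega \wedge \ast^{n+1}\eta| \le |d\phi_k|\cdot|\omega|\cdot|\eta|\dvol$ together with $\omega,\eta \in L^2$. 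One should also note that near $\partial M$ one can arrange the exhaustion to respect a collar, so that no extra boundary contributions are produced by the cutoffs. Everything else is bookkeeping with Stokes' theorem and the Hodge star identities recalled above.
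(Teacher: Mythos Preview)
Your proposal is correct and follows essentially the same approach as the paper's proof: both use a sequence of compactly supported cutoff functions $\phi_k$ with $\|d\phi_k\|_\infty \to 0$ (whose existence is guaranteed by completeness, exactly as in Gaffney's trick) to reduce to the compactly supported case via Stokes' theorem. The paper's proof is only a two-sentence sketch stating the existence of such cutoffs and asserting the reduction, whereas you have spelled out the pointwise identity, the dominated-convergence argument for the main term, and the Cauchy--Schwarz bound on the error term $\int_M d\phi_k \wedge \omega \wedge \ast^{n+1}\eta$.
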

\begin{proof}
Completeness ensures the existence of a sequence $f_n\colon M \to [0,1]$
of smooth functions with compact support such that
$M$ is the union of the compact sets $\{x \in M \mid f_n(x) = 1\}$ and
$||df_n||_{\infty} := \sup\{||(df_n)_x||_x \mid x\in M\}  < \frac{1}{n}$
holds. With the help of the sequence $(f_n)_{n \ge 1}$
one can reduce the claim to the easy case, where $\omega$ and $\eta$ have
compact support.  
\end{proof}

From now on suppose that the boundary of $M$ is empty.
Then $d^n$ and $\delta^n$ are formally adjoint in the sense
that we have for $\omega \in \Omega^n(M)$ and $\eta \in \Omega^{n+1}(M)$
such that $\omega$, $d^n\omega$, $\eta$ and $\delta^{n+1}\eta$ are square-integrable.
\begin{eqnarray}
\langle d^n(\omega),\eta\rangle_{L^2}
& = &
\langle \omega, \delta^{n+1}(\eta)\rangle_{L^2}.
\label{d_and_delta_are_adjoint_on_smooth_forms_with_compact_support}
\end{eqnarray}
Let $L^2\Omega^n(M)$
be the Hilbert space completion of $\Omega_c^n(M)$.
Define the \emph{space of $L^2$-integrable harmonic smooth $n$-forms}
\begin{equation}
\calh^n_{(2)}(M)  := 
\{\omega \in \Omega^n(M) \mid \Delta_n(\omega) = 0,
\int_M \omega \wedge *\omega < \infty\}.
\label{space_of_L2-integrable_harmonic_smooth_n-forms}
\end{equation}

The following two results are the analytic versions of Lemma~\ref{lem:Hodge_decomposition}.

\begin{theorem}[Hodge-de Rham Decomposition]
\label{the:Hodge-de_Rham_Decomposition}
Let $M$ be a complete Riemannian manifold without boundary. Then we
obtain an orthogonal decomposition, the so called
\emph{Hodge-de Rham decomposition}
\begin{eqnarray*}
L^2\Omega^n(M)
& = &
\calh_{(2)}^n(M) \oplus \clos\bigl(d^{n-1}(\Omega_c^{n-1}(M))\bigr) \oplus
\clos\bigl(\delta^{n+1}(\Omega_c^{n+1}(M))\bigr).
\end{eqnarray*}
\end{theorem}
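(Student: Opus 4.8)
The plan is to mimic the proof of the finite-dimensional Lemma~\ref{lem:Hodge_decomposition}, but now the three pieces are genuine closed subspaces of an infinite-dimensional Hilbert space, so care is needed about closures and density. First I would establish the \emph{algebraic} orthogonality relations. For $\omega \in \Omega_c^{n-1}(M)$ and $\eta \in \Omega_c^{n+1}(M)$ we have $\langle d^{n-1}\omega, \delta^{n+1}\eta \rangle_{L^2} = \langle d^n d^{n-1}\omega, \eta\rangle_{L^2} = 0$ by the integration-by-parts formula~\eqref{d_and_delta_are_adjoint_on_smooth_forms_with_compact_support} (valid since the boundary is empty and these forms have compact support), using $d^n \circ d^{n-1} = 0$. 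Similarly, if $\alpha \in \calh_{(2)}^n(M)$ then $\Delta_n\alpha = 0$, and since $M$ is complete Lemma~\ref{lem:integration_by_parts_on_complete_manifolds} applied with vanishing boundary term gives $0 = \langle \Delta_n\alpha, \alpha\rangle_{L^2} = \langle d^n\alpha, d^n\alpha\rangle_{L^2} + \langle \delta^n\alpha, \delta^n\alpha\rangle_{L^2}$, hence $d^n\alpha = 0$ and $\delta^n\alpha = 0$; this yields $\langle \alpha, d^{n-1}\omega\rangle_{L^2} = \langle \delta^n\alpha, \omega\rangle_{L^2} = 0$ and $\langle \alpha, \delta^{n+1}\eta\rangle_{L^2} = \langle d^n\alpha, \eta\rangle_{L^2} = 0$ for all such $\omega,\eta$. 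Taking closures, the three summands on the right-hand side are mutually orthogonal closed subspaces of $L^2\Omega^n(M)$, so their (internal) orthogonal sum makes sense and is a closed subspace $\calk$ of $L^2\Omega^n(M)$.

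Next I would prove that $\calk = L^2\Omega^n(M)$, equivalently $\calk^\perp = 0$. Suppose $\beta \in L^2\Omega^n(M)$ is orthogonal to all three summands. Orthogonality to $\clos(d^{n-1}(\Omega_c^{n-1}(M)))$ means $\langle \beta, d^{n-1}\omega\rangle_{L^2} = 0$ for all $\omega \in \Omega_c^{n-1}(M)$, i.e.\ $\beta$ is a distributional solution of $\delta^n\beta = 0$; likewise orthogonality to $\clos(\delta^{n+1}(\Omega_c^{n+1}(M)))$ says $d^n\beta = 0$ distributionally. Hence $\Delta_n\beta = (d^{n-1}\delta^n + \delta^{n+1}d^n)\beta = 0$ in the distributional sense. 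At this point I invoke elliptic regularity: $\Delta_n$ is an elliptic operator with smooth coefficients, so any $L^2$ distributional solution of $\Delta_n\beta = 0$ is in fact a smooth form, and since it is $L^2$-integrable it lies in $\calh_{(2)}^n(M)$. But $\beta$ is also orthogonal to $\calh_{(2)}^n(M)$, so $\beta = 0$. This proves the decomposition.

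The main obstacle — and the step that genuinely goes beyond the elementary linear algebra of Section~\ref{sec:Operators_of_finite-dimensional_Hilbert_spaces} — is the elliptic regularity input: upgrading a distributional $L^2$-solution of $\Delta_n\beta = 0$ to a smooth (hence classically harmonic) $L^2$-form. This is precisely Weyl's lemma / interior elliptic regularity for the Hodge Laplacian, and it is the only place where completeness of $M$ plays no direct role but local analysis of $\Delta_n$ does. I would quote it from a standard reference (e.g.\ \cite[Section~15]{Lueck(2005algtop)} or a text on elliptic operators) rather than prove it. The remaining subtlety is the careful bookkeeping with closures: one must check that ``$\beta \perp \clos(d^{n-1}(\Omega_c^{n-1}(M)))$'' is equivalent to ``$\beta \perp d^{n-1}(\Omega_c^{n-1}(M))$'', which is immediate since orthogonality to a set is the same as orthogonality to its closed span. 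A final remark worth including: the ``in particular'' statement — that the composite $\calh_{(2)}^n(M) \to \{\text{closed }L^2\text{-forms}\} \to \overline{H}^n_{(2)}(M)$ (reduced $L^2$-cohomology) is an isometric isomorphism — follows formally from the decomposition exactly as in Lemma~\ref{lem:Hodge_decomposition}, since the kernel of $d^n$ on $L^2$ is $\calh_{(2)}^n(M) \oplus \clos(d^{n-1}(\Omega_c^{n-1}(M)))$ and quotienting by the closure of the image of $d^{n-1}$ returns $\calh_{(2)}^n(M)$ isometrically.
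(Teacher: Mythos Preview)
The paper states Theorem~\ref{the:Hodge-de_Rham_Decomposition} without proof, so there is nothing to compare your argument against; your outline is the standard one and is structurally correct.

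One technical point deserves tightening. In the orthogonality step you invoke Lemma~\ref{lem:integration_by_parts_on_complete_manifolds} to conclude, for $\alpha\in\calh^n_{(2)}(M)$, that
\[
0=\langle \Delta_n\alpha,\alpha\rangle_{L^2}=\|d^n\alpha\|_{L^2}^2+\|\delta^n\alpha\|_{L^2}^2.
\]
But the hypotheses of that lemma require $d^n\alpha$ (respectively $\delta^n\alpha$) to already be square-integrable, which is precisely what you are trying to establish; knowing only $\alpha\in L^2$ and $\Delta_n\alpha=0$ does not give this a priori on a non-compact manifold. The fix is to run the cutoff argument from the proof of Lemma~\ref{lem:integration_by_parts_on_complete_manifolds} directly: with the functions $f_k$ there, the form $f_k^2\alpha$ has compact support, so
\[
0=\langle \Delta_n\alpha, f_k^2\alpha\rangle_{L^2}
=\langle d^n\alpha, d^n(f_k^2\alpha)\rangle_{L^2}+\langle \delta^n\alpha, \delta^n(f_k^2\alpha)\rangle_{L^2},
\]
and expanding via the Leibniz rule together with $\|df_k\|_\infty\to 0$ yields $\|f_k\,d^n\alpha\|_{L^2}^2+\|f_k\,\delta^n\alpha\|_{L^2}^2\to 0$, hence $d^n\alpha=\delta^n\alpha=0$. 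With this adjustment the rest of your argument (weak harmonicity of any $\beta\in\calk^\perp$, then elliptic regularity to place $\beta$ in $\calh^n_{(2)}(M)$, then $\beta=0$) goes through. Your closing remark about reduced $L^2$-cohomology is correct but is not part of the statement being proved.
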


For us the following result will be of importance.
Put
\begin{equation}
\calh^n(M)  := 
\{\omega \in \Omega^n(M) \mid \Delta_n(\omega) = 0\}.
\label{space_of_harmonic_smooth_n-forms}
\end{equation}
This is the same as $\calh^n_{(2)}(M)$ introduced in~\eqref{space_of_L2-integrable_harmonic_smooth_n-forms}
if $M$ is compact.

\begin{theorem}[Hodge-de Rham Theorem]
\label{the:Hodge_de_Rham_Theorem}
Let $M$ be a closed smooth manifold. Then the canoncial map
\[
\calh^n(M) \xrightarrow{\cong} H^n_{\dR}(M)
\]
is an isomorphism.
\end{theorem}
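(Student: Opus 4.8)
The plan is to establish the Hodge--de Rham Theorem for a closed manifold $M$ by combining the Hodge--de Rham Decomposition (Theorem~\ref{the:Hodge-de_Rham_Decomposition}), which applies since a closed manifold is complete with empty boundary, with the elementary linear-algebra decomposition of Lemma~\ref{lem:Hodge_decomposition} and the de Rham Theorem (Theorem~\ref{the:De_Rham_Theorem}). Since $M$ is compact, every smooth form is automatically $L^2$-integrable, so $\calh^n(M) = \calh^n_{(2)}(M)$, and the spaces $\Omega_c^k(M)$ coincide with $\Omega^k(M)$; in particular the closures appearing in Theorem~\ref{the:Hodge-de_Rham_Decomposition} are taken inside $L^2\Omega^n(M)$ of the images of genuine differential operators on all smooth forms.

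First I would observe that the canonical map $\calh^n(M) \to H^n_{\dR}(M)$ is just the restriction of the quotient map $\ker(d^n) \to \ker(d^n)/\im(d^{n-1})$ to the subspace of harmonic forms; this makes sense because a harmonic form is closed (if $\Delta_n\omega = 0$ then, pairing with $\omega$ and integrating by parts via~\eqref{d_and_delta_are_adjoint_on_smooth_forms_with_compact_support}, one gets $d^n\omega = 0$ and $\delta^n\omega = 0$, exactly as in the proof of Lemma~\ref{lem:Hodge_decomposition}). The heart of the argument is to show this map is both injective and surjective. For injectivity: if $\omega \in \calh^n(M)$ is exact, say $\omega = d^{n-1}\eta$, then $\langle \omega,\omega\rangle_{L^2} = \langle d^{n-1}\eta,\omega\rangle_{L^2} = \langle \eta, \delta^n\omega\rangle_{L^2} = 0$ since $\delta^n\omega = 0$, hence $\omega = 0$. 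For surjectivity: given a closed form $\alpha \in \ker(d^n)$, use the Hodge--de Rham Decomposition to write its $L^2$-class as a sum of a harmonic part, a part in $\clos(d^{n-1}\Omega^{n-1}(M))$ and a part in $\clos(\delta^{n+1}\Omega^{n+1}(M))$; the $\delta$-component must vanish because $\alpha$ is closed (pairing $d^n\alpha = 0$ against that component and using elliptic regularity / the structure of the decomposition forces it to be zero), so $\alpha$ differs from a harmonic form by an exact form, giving the desired preimage.

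The main obstacle, and the point where real analysis rather than formal algebra is needed, is the surjectivity step: one must know that the harmonic representative $h$ extracted from the $L^2$-decomposition of a smooth closed form $\alpha$ is itself \emph{smooth}, and that $\alpha - h$ is not merely $L^2$-close to $\im(d^{n-1})$ but actually lies in the image of $d^{n-1}$ on smooth forms. This is precisely elliptic regularity for the Laplacian $\Delta_n$: $\Delta_n$ is a second-order elliptic operator on the closed manifold $M$, so its kernel in the distributional sense consists of smooth forms (Weyl's lemma), $\calh^n(M)$ is finite-dimensional, and the closed range of $\Delta_n$ yields the strong (not merely $L^2$) Hodge decomposition $\Omega^n(M) = \calh^n(M) \oplus d^{n-1}\Omega^{n-1}(M) \oplus \delta^{n+1}\Omega^{n+1}(M)$ with each summand a space of genuine smooth forms. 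Granting this, the closed form $\alpha$ decomposes with no $\delta$-part (by the same pairing argument as above applied to honest smooth forms), so $\alpha = h + d^{n-1}\beta$ with $h \in \calh^n(M)$, proving $[\alpha] = [h]$ in $H^n_{\dR}(M)$. I expect the exposition to cite standard elliptic theory on closed manifolds for this step rather than prove it, since it lies outside the elementary Hilbert-space framework that the rest of the paper emphasizes; the algebraic bookkeeping around it mirrors exactly the proof of Lemma~\ref{lem:Hodge_decomposition}.
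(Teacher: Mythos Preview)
Your argument is correct and is the standard route to the Hodge--de Rham Theorem: harmonic implies closed and coclosed by the pairing computation, injectivity follows from coclosedness, and surjectivity comes from the smooth Hodge decomposition, whose existence rests on elliptic regularity for $\Delta_n$ on a closed manifold. You correctly identify elliptic regularity as the genuine analytic input and anticipate that the paper will cite it rather than prove it.

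In fact the paper goes further than you anticipate: it does not give any argument at all for Theorem~\ref{the:Hodge_de_Rham_Theorem}, but simply refers the reader to Gilkey and Roe for the entire statement. So your proposal is more detailed than what the paper provides; the approach you sketch is exactly the one found in those references, and your expectation that the hard analysis would be outsourced is borne out --- just at the level of the whole theorem rather than only the regularity step.
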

\begin{proof} See for instance \cite[Lemma~1.5.3]{Gilkey(1994)}, or~\cite[(4.2)]{Roe(1988a)}.
\end{proof}

The following remarks are the analytic versions of 
Remark~\ref{rem:Homotopy_invariance_of_dim(ker(Delta_n)_combinatorial} and 
Remark~\ref{rem:heat_operator_hilbert}

\begin{remark}[Homotopy invariance of $\dim(\calh^n(M))$]
  \label{rem:Homotopy_:invariance_of_dim(ker(Delta_n)}
  Theorem~\ref{the:Hodge_de_Rham_Theorem} implies that $\dim(\ker(\Delta_n))$
  depends only on the homotopy type of $M$ and is in particular
  independent of the Riemannian metric of $M$.  Of course the spectrum
  of the Laplace operator $\Delta_n$ does depend on the Riemannian metric, 
  but a part of it, namely, the multiplicity of the
  eigenvalue $0$, which is just $\dim(\calh^n(M))$, depends only
  on the homotopy type of $M$. 
\end{remark}

\begin{remark}[Heat kernel]\label{rem:heat_operator_analytic}
  To the analytic Laplace operator $\Delta_n \colon \Omega^n M \to \Omega^n M$ one can assign its
  \emph{heat operator} $e^{-t\Delta_n} \colon \Omega^n M \to \Omega^n M$ using functional
  calculus. Roughly speaking, each eigenvalue $\lambda$ of $\Delta_n$ transforms to the
  eigenvalue $e^{-t\lambda}$.  This operator runs out to be given by a kernel, the so
  called \emph{heat kernel} $e^{-t \Delta_n}(x,y)$.  Recall that $e^{-t \Delta_n}(x,y)$ is
  an element in $\hom_{\IR}(\Alt^n(T_xM),\Alt^n(T_yM))$ for $x,y$ in $M$ and we get for
  $\omega \in \Omega^n(M)$
  \[
  e^{-t\Delta_n}(\omega)_x = \int_M e^{-t\Delta_n}(x,y)(\omega_y) \dvol.
  \]
  For each $x \in M$ we obtain an endomorphism $e^{-t \Delta_n}(x,x)$ of a
  finite-dimensional real vector space and we have the real number
  $\tr\bigl(e^{-t\Delta_n}(x,x)\bigr)$. Then we get, see~\cite[1.6.52 on page
  56]{Gilkey(1994)}
  \[
  b_n(M) = \lim_{t \to \infty} \int_M \tr\bigl(e^{-t\Delta_n}(x,x)\bigr) \dvol.
  \]
\end{remark}


\typeout{--------------- Section 4: Reidemeister torsion for closed Riemannian manifolds  ----------------}

\section{Topological torsion for closed Riemannian manifolds}
\label{sec:Topological_torsion_for_closed_Riemannian_manifolds}

In the section we introduce and investigate the notion of the topological torsion for a closed Riemannian
manifold.


\subsection{The definition of topological torsion for closed Riemannian manifolds}
\label{subsec:The_definition_of_Topological_torsion_for_closed_Riemannian_manifolds}

Let $M$ be a closed Riemannian manifold. 
The Riemannian metric induces an inner product on $\Omega^n(M)$, 
see~\eqref{inner_L2-product_on_space_of_smooth_forms_with_compact_support},
and hence a Hilbert space structure on the finite-dimensional real vector space
$\calh^n(M)$. Equip $H^n_{\sing}(M;\IR)$ with the Hilbert space structure $\kappa^n_{\harm}(M)$
for which the composite of the isomorphisms (or their inverses) of Theorem~\ref{the:De_Rham_Theorem}
and Theorem~\ref{the:Hodge_de_Rham_Theorem}
\[
H^n_{\sing}(M;\IR) \xrightarrow{\cong} H^n_{\sing;C^{\infty}}(M;\IR)
\xrightarrow{\cong} H^n_{\dR}(M) \xrightarrow{\cong} \calh^n(M)
\]
becomes an isometry. There is a preferred isomorphism
\[
\hom_{\IR}(H_n^{\sing}(M;\IR),\IR) \xrightarrow{\cong} H^n_{\sing}(M;\IR).
\]
Equip $H_n^{\sing}(M;\IR)$ with the Hilbert space structure
$\kappa_n^{\harm}(M)$, such that for the induced Hilbert space structure on the dual vector space
$\hom_{\IR}(H_n^{\sing}(M;\IR),\IR)$ and the Hilbert space structure $\kappa^n_{\harm}(M)$ on 
$H^n_{\sing}(M;\IR)$ introduced above this isomorphisms becomes an isometry.

Fix a finite $CW$-complex $X$ and a homotopy equivalence
$f \colon X \to M$, for instance, a smooth triangulation $t \colon K \to M$,
i.e., a finite simplicial complex $K$ together with a homeomorphism $t \colon K \to M$ such
that the restriction of $t$ to a simplex is a smooth immersion,
see~\cite{Munkres(1961),Whitehead(1940_complexes)}.  
Recall that there is a natural isomorphism between singular and cellular homology
\[
u_n(X;\IR) \colon H_n(X;\IR) := H_n(\IR \otimes_{\IZ} C_*(X)) \xrightarrow{\cong} H_n^{\sing}(X;\IR).
\]
We equip $H_n(X;\IR) := H_n(\IR \otimes_{\IZ} C_*(X))$ with the Hilbert space structure $\kappa_n(f)$ for which the 
preferred isomorphism
\[
H_n(X;\IR)  \xrightarrow{u_n(X;\IR)} H_n^{\sing}(X;\IR) \xrightarrow{H_n^{\sing}(f;\IR)} H_n^{\sing}(M;\IR)
\]
is isometric if we equip the target with the Hilbert space structure $\kappa^{\harm}_n(M)$ introduced above.

The cellular $\IZ$-chain complex $C_*(X)$ inherits from the $CW$-structure a preferred equivalence of $\IZ$-basis.
So we can consider
\begin{equation*}
\rho(C_*(X);\kappa_*^{\harm}(f)) \in \IR
\end{equation*}
as introduced in 
Definition~\ref{def:Torsion_for_finite_free_Z-chain_complex_with_a_given_Hilbert_structure_on_homology}.
Consider another finite $CW$-complex $X'$ and a homotopy equivalence
$f' \colon X' \to M$. Choose a cellular homotopy equivalence $g \colon X \to X'$ such that
$f' \circ g$ is homotopic to $f$. Then $C_*(g) \colon C_*(X) \to C_*(X')$ is a 
$\IZ$-chain homotopy equivalence of finite based free $\IZ$-chain complexes such that 
$H_n(g;\IR) \colon (H_n(X;\IR), \kappa^{\harm}_n(f)) \to (H_n(X';\IR), \kappa^{\harm}_n(f')))$
is an isometric isomorphism for all  $n \ge 0$.  We conclude from
Lemma~\ref{lem:homotopy_invariance_of_rho(C_ast,kappa)} 
\[
\rho(C_*(X);\kappa_*^{\harm}(f))  = \rho(C_*(X');\kappa_*^{\harm}(f')).
\]
Hence the following definition makes sense.

\begin{definition}[Topological torsion of a closed Riemannian manifold]
\label{def:Topological_Torsion_of_a_closed_Riemannian_manifold}
Let $M$ be a closed Riemannian manifold. Define its \emph{topological torsion}
\[
\rho_{\topo}(M) := \rho(C_*(X),\kappa^{\harm}_*(f))
\]
for any choice of finite $CW$-complex $X$ and homotopy equivalence $f \colon X \to M$.
\end{definition}


\subsection{Topological torsion of rational homology spheres}
\label{subsec:Topological_torsion_of_rational_homology_spheres}

Let $M$ be a closed oriented Riemannian manifold which is a rational homology sphere,
i.e.,  $H_n(M;\IQ) \cong H_n(S^d;\IQ)$ for $d = \dim(M)$ and $n \ge 0$.
We want to show
\begin{equation}
\rho^{\topo}(M) 
 = 
\frac{1- (-1)^d}{2} \cdot \ln(\vol(M)) 
+ \sum_{n \ge 0} (-1)^n\cdot \ln\bigl(\bigl|\tors(H_n(M;\IZ))\bigr|\bigr).
\label{rho_topo(rational_homology_sphere)}
\end{equation}
Choose a finite $CW$-complex $X$ and a homotopy equivalence $f \colon X \to M$.
If we equip $H_*(\IR \otimes_{\IZ} C_*(X))$ with the integral Hilbert space structure $\kappa^{\IZ}_*$
as explained in Example~\ref{exa:Integral_Hilbert_structure}, we get
from Example~\ref{exa:Integral_Hilbert_structure}. 
\[
\rho(\IR \otimes_{\IZ} C_*(X);\kappa^{\IZ}_*) 
= \sum_{n \ge 0} (-1)^n\cdot \ln\bigl(\bigl|\tors(H_n(M;\IZ))\bigr|\bigr).
\]
Hence we get
\begin{eqnarray*}
\rho^{\topo}(M) 
& = & 
\rho(X;\kappa^{\harm}_*)
\\
 & = & 
\rho(X;\kappa^{\harm}_*) - \rho(X;\kappa^{\IZ}_*) + \rho(X;\kappa^{\IZ}_*) + 
\\ 
& = & 
\rho(X;\kappa^{\harm}_*) - \rho(X;\kappa^{\IZ}_*) 
+ \sum_{n \ge 0} (-1)^n\cdot \ln\bigl(\bigl|\tors(H_n(M;\IZ))\bigr|\bigr).
\end{eqnarray*}
 Lemma~\ref{lem:homotopy_invariance_of_rho(C_ast,kappa)} implies
\begin{eqnarray*}
\lefteqn{\rho(X;\kappa^{\harm}_*) - \rho(X;\kappa^{\IZ}_*)}
& & 
\\
& = &
\ln\bigl({\det}^{\perp}\bigl(\id \colon H_0(\IR\otimes_{\IZ} C_*(X)),\kappa^{\IZ}_0(X)) \to 
H_0(\IR \otimes_{\IZ} C_*(X)),\kappa_0^{\harm}(X))\bigr)
\\
& & 
+(-1)^d \cdot
\ln\bigl({\det}^{\perp}\bigl(\id \colon H_d(\IR\otimes_{\IZ} C_*(X)),\kappa^{\IZ}_d(X)) \to 
\\
& & \hspace{60mm}
H_d(\IR \otimes_{\IZ} C_*(X)),\kappa^{\harm}_d(X))\bigr).
\end{eqnarray*}

Let $1 \in H_0^{\sing}(M;\IZ)$ and $[M] \in H_d^{\sing}(M;\IZ)$ be the obvious generators of
the infinite cyclic groups $H_0^{\sing}(M;\IZ)$ and $H_d^{\sing}(M);\IZ)$. They determine
elements in the $1$-dimensional vector spaces $H^0_{\sing}(M;\IR) =
\hom_{\IZ}(H_0(M;\IZ),\IR)$ and $H^d_{\sing}(M;\IR) = \hom_{\IZ}(H_d(M;\IZ),\IR)$. Their 
image under the composite
\[
H^n_{\sing}(M;\IR) \xrightarrow{\cong} H^n_{\sing;C^{\infty}}(M;\IR)
\xrightarrow{\cong} H^n_{\dR}(M) \xrightarrow{\cong} \calh^n(M)
\]
is the constant function $c_1 \colon M \to \IR$ with value $1$ and $\frac{\dvol}{\vol(M)}$
for $\dvol$ the volume form $M$ for $n = 0,d$.  The norm of $c_1$ and
$\frac{\dvol}{\vol(M)}$ with respect to norm coming 
from~\eqref{inner_L2-product_on_space_of_smooth_forms_with_compact_support} is
\[
||c_1||_{L^2} = \sqrt{\int_M c_1 \wedge \ast^d(c_1)} = \sqrt{\int_M \dvol} = \sqrt{\vol(M)},
\]
and
\[
\left|\left|\frac{\dvol}{\vol(M)}\right|\right|_{L^2} 
= 
\sqrt{\int_M \frac{\dvol}{\vol(M)} \wedge \ast^d\left(\frac{\dvol}{\vol(M)}\right)} 
=  
\sqrt{\int_M \frac{\dvol}{\vol(M)^2}} 
= 
\sqrt{\frac{1}{\vol(M)}}.
\]
This implies 
\begin{multline*}
\ln\bigl({\det}^{\perp}\bigl(\id \colon H_0(\IR\otimes_{\IZ} C_*(X)),\kappa^{\IZ}_0(X)) \to 
H_0(\IR \otimes_{\IZ} C_*(X)),\kappa_n^{\harm}(X))\bigr) 
\\
= \frac{\ln(\vol(M))}{2}
\end{multline*}
and
\begin{multline*}
\ln\bigl({\det}^{\perp}\bigl(\id \colon H_d(\IR\otimes_{\IZ} C_*(X)),\kappa^{\IZ}_d(X)) \to 
H_d(\IR \otimes_{\IZ} C_*(X)),\kappa_d^{\harm}(X))\bigr) 
\\
= \frac{-\ln(\vol(M))}{2}
\end{multline*}
Now~\eqref{rho_topo(rational_homology_sphere)} follows.


\subsection{Further properties of the topological torsion}
\label{subsec:Further_properties_of_the_topological_torsion}

Lemma~\ref{lem:homotopy_invariance_of_rho(C_ast,kappa)}  implies 

\begin{lemma} \label{lem:homotopy_invariance_of_the_topological_Reidemeister_torsion}
Let $f \colon M \to N$ be a homotopy equivalence of closed Riemannian manifolds. Then
\begin{multline*}
\rho_{\topo}(N) - \rho_{\topo}(M) =  \sum_{n \ge 0} (-1)^n \cdot  
{\det}^{\perp}\bigl(H_n^{\sing}(f;\IR) \colon H_n^{\sing}(M,\kappa^{\harm}_n(M)) 
\\
\to H_n(N;\IR),\kappa_n^{\harm}(N))\big).
\end{multline*}
\end{lemma}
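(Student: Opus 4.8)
The plan is to reduce the statement to the chain-level homotopy invariance already recorded in Lemma~\ref{lem:homotopy_invariance_of_rho(C_ast,kappa)}, by passing to a common cellular model for $f$. First I would choose finite $CW$-complexes $X$ and $Y$ together with homotopy equivalences $a \colon X \to M$ and $b \colon Y \to N$. Fixing a homotopy inverse of $b$ and applying cellular approximation to the composite $b^{-1}\circ f\circ a \colon X \to Y$, I obtain a cellular map $g \colon X \to Y$ with $b \circ g$ homotopic to $f \circ a$; since $f$, $a$ and $b$ are homotopy equivalences, $g$ is one as well, so $C_*(g) \colon C_*(X) \to C_*(Y)$ is a $\IZ$-chain homotopy equivalence of finite based free $\IZ$-chain complexes. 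By Definition~\ref{def:Topological_Torsion_of_a_closed_Riemannian_manifold}, together with the independence of the choices verified just before it, we have $\rho_{\topo}(M) = \rho(C_*(X);\kappa_*^{\harm}(a))$ and $\rho_{\topo}(N) = \rho(C_*(Y);\kappa_*^{\harm}(b))$.

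Next I would apply Lemma~\ref{lem:homotopy_invariance_of_rho(C_ast,kappa)} to $C_*(g)$ with the Hilbert structures $\kappa_*^{\harm}(a)$ and $\kappa_*^{\harm}(b)$ on its homology, which yields
\[
\rho_{\topo}(N) - \rho_{\topo}(M) = \sum_{n \ge 0} (-1)^n \cdot \ln\bigl({\det}^{\perp}\bigl(H_n(\id_{\IR} \otimes_{\IZ} C_*(g)) \colon (H_n(X;\IR),\kappa_n^{\harm}(a)) \to (H_n(Y;\IR),\kappa_n^{\harm}(b))\bigr)\bigr).
\]
It then remains to identify, for each $n$, the determinant on the right with ${\det}^{\perp}$ of $H_n^{\sing}(f;\IR) \colon (H_n^{\sing}(M;\IR),\kappa_n^{\harm}(M)) \to (H_n^{\sing}(N;\IR),\kappa_n^{\harm}(N))$. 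By the very definition of $\kappa_n^{\harm}(a)$, the isomorphism $H_n^{\sing}(a;\IR) \circ u_n(X;\IR)$ from $(H_n(X;\IR),\kappa_n^{\harm}(a))$ to $(H_n^{\sing}(M;\IR),\kappa_n^{\harm}(M))$ is isometric, and likewise for $Y$, $b$, $N$. Naturality of the cellular-to-singular comparison isomorphism $u_n$ gives $H_n^{\sing}(g;\IR) \circ u_n(X;\IR) = u_n(Y;\IR) \circ H_n(\id_{\IR} \otimes_{\IZ} C_*(g))$, and the homotopy $b \circ g \simeq f \circ a$ gives $H_n^{\sing}(b;\IR) \circ H_n^{\sing}(g;\IR) = H_n^{\sing}(f;\IR) \circ H_n^{\sing}(a;\IR)$. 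Chaining these identities shows that the two isometric identifications carry $H_n(\id_{\IR} \otimes_{\IZ} C_*(g))$ to $H_n^{\sing}(f;\IR)$, and since ${\det}^{\perp}$ is unchanged under pre- and post-composition with isometric isomorphisms --- an isometry being an automorphism of absolute determinant $1$, so this follows from Lemma~\ref{lem:main_properties_of_det_perp} --- the two determinants coincide. Summing over $n$ gives the asserted formula.

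The main obstacle is the bookkeeping in the last step: one is juggling four Hilbert-space structures (two cellular, two harmonic) linked by the maps $u_n$, $H_n^{\sing}(a)$, $H_n^{\sing}(b)$ and $H_n^{\sing}(g)$, and one must check that the relevant square commutes and that each leg is isometric where claimed. Once that naturality diagram is in place, everything else is a direct appeal to Lemma~\ref{lem:homotopy_invariance_of_rho(C_ast,kappa)} and Lemma~\ref{lem:main_properties_of_det_perp}, with no genuine analysis entering.
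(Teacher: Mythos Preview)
Your proposal is correct and follows exactly the route the paper takes: the paper's entire proof is the single line ``Lemma~\ref{lem:homotopy_invariance_of_rho(C_ast,kappa)} implies'', and you have simply unpacked what that implication entails by choosing cellular models, applying the lemma, and then identifying the resulting determinants with those of $H_n^{\sing}(f;\IR)$ via the isometric comparison maps. The bookkeeping you flag as the main obstacle is handled correctly.
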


\begin{remark}[Twisting with finite-dimensional orthogonal representations]
  \label{rem:Twisting_with_finite-dimensional_orthogonal_representations}
  In general the topological torsion does depend on the Riemannian metric, see
  Lemma~\ref{lem:homotopy_invariance_of_the_topological_Reidemeister_torsion}.
  Nevertheless the name topological torsion is justified since this dependency is well
  understood and depends only on $H_n(M;\IR)$.

  Notice that at least $H_0(M;\IR)$ cannot be trivial for a smooth manifold. However, there
  are prominent cases, where one can specify a orthogonal finite-dimen\-sio\-nal
  representation $V$ of $\pi_1(M)$ for a closed Riemannian manifold $M$ such that the
  $V$-twisted singular homology $H_n^{\pi_1(M)}(M;V)$ vanishes for all $n \ge 0$. One can
  also define a $V$-twisted topological torsion $\rho(M;V)$. If $H_n^{\pi_1(M)}(M;V)$
  vanishes for all $n \ge 0$, then $\rho(M;V)$ does not depend on the Riemannian metric at
  all, and  only on the simple homotopy type of $M$.
\end{remark}

\begin{remark}[Poincar\'e duality] \label{rem:Poincare_duality}
A direct computation using Poincar\' e duality and the Universal Coefficient Theorem
show that in the situation of 
Subsection~\ref{subsec:Topological_torsion_of_rational_homology_spheres}
the topological torsion vanishes if the dimension of $M$ is even. This is true in general.
Namely, if $M$ is a closed Riemannian manifold of even dimension, then 
$\rho_{\topo}(M) = 0$. 
\end{remark}

\begin{remark}[Product formula]
\label{rem:product_formula} Let $M$ be closed Riemannian manifolds. Then
\[
\rho_{\topo}(M \times N) = \chi(M) \cdot \rho_{\topo}(N) + \chi(N) \cdot \rho_{\topo}(M).
\]
One can more generally investigate the behavior of the topological torsion under fiber bundles,
see~\cite{Lueck-Schick-Thielmann(1998)}.
\end{remark}

\begin{remark}[Compact manifolds with boundary and glueing formula]
\label{rem:Compact_manifolds_with_boundary_and_glueing_formula}
The topological torsion is also defined for compact Riemannian manifolds with boundary.
One has to put the right boundary conditions on the space of harmonic forms so that
Theorem~\ref{the:Hodge_de_Rham_Theorem} remains true. 

Consider compact Riemannian manifolds $M$ and $N$ together with a diffeomorphism $f
\colon \partial M \xrightarrow{\cong} \partial N$.  Equip $M$, $N$, $\partial N$, and $M
\cup_f N$ with Riemannian metrics. Then one obtains the glueing formula
\[
\rho_{\topo}(M \cup_f N) = \rho_{\topo}(M) + \rho_{\topo}(N) - \rho_{\topo}(\partial M) + \rho(LHS_*),
\]
where $LHS_*$ is the Hilbert chain complex given by the long exact homology sequence
\begin{multline*}
\ldots \to H_n^{\sing}(\partial M;\IR) \to H_n^{\sing}(M;\IR) \oplus H_n^{\sing}(N;\IR) 
\\
\to 
H_n^{\sing}(M \cup_f N;\IR) \to H_{n-1}^{\sing} (\partial M; \IR) \to \ldots
\end{multline*}
for which each homology group is equipped with the harmonic Hilbert space structure $\kappa^{\harm}$.
This follows from Lemma~\ref{lem:additivity_of-torsion_fin_dim}
and Lemma~\ref{lem:homotopy_invariance_of_rho(C_ast,kappa)}.
\end{remark}


\typeout{--------------- Section 5: Analytic torsion for closed Riemannian manifolds  ----------------}

\section{Analytic torsion for closed Riemannian manifolds}
\label{sec:Analytic_torsion_for_closed_Riemannian_manifolds}

Recall that we showed that the Betti number of a finite
$CW$-complex $X$ is the dimension of the kernel of the combinatorial Laplace operator
$\Delta_n \colon \IR \otimes_{\IZ} C_n(X) \to \IR \otimes_{\IZ} C_n(X)$.  This triggered
the question whether the Betti number $b_n(M)$ of a closed Riemannian manifold is the dimension of
the kernel of the analytic Laplace operator $\Delta_n \colon \Omega^n(M) \to \Omega(M)$.
We saw  that the answer is positive, see Theorem~\ref{the:Hodge_de_Rham_Theorem}.

Next we want to apply the same line of thought to  torsion. We know how to express the topological
torsion in terms of the combinatorial Laplace operator by
Lemma~\ref{lem:torsion_in_terms_of_Laplacian}, namely for a finite
$CW$-complex $X$ and a homotopy equivalence $X \to M$ we get for the combinatorial Laplace
operator $\Delta_n \colon C_n(X) \to C_n(X)$ the formula
\begin{multline*}
\rho^{\topo}(M) := -\frac{1}{2} \cdot \sum_{n \ge 0} (-1)^n \cdot n \cdot \ln\bigl({\det}^{\perp}(\Delta)_n\bigr)
\\
+ \sum_{n \ge 0} (-1)^n \cdot \ln\bigl({\det}^{\perp}\bigl(\id \colon H_n(\IR \otimes_{\IC} C_*) \to 
(H_n(\IR \otimes_{\IC} C_*),\kappa^{\harm}_n)\bigr)\bigr).
\end{multline*}
One can hope that the rather complicated correction term given by the sum of terms
involving $\kappa^{\harm}_*$ is not necessary in the analytic setting, since the analytic
Laplace operator $\Delta_n \colon \Omega^n(M) \to\Omega^n(M)$  
is closely related  to harmonic forms. This suggests
to try to make sense of the following expression involving the  analytic Laplace operator

\[
\rho_{\an}(M) := -\frac{1}{2} \cdot \sum_{n \ge 0} (-1)^n \cdot n \cdot \ln\bigl({\det}^{\perp}(\Delta_n)\bigr).
\]
The problem is that the analytic Laplace operator  $\Delta_n$
acts on infinite-dimensional vector spaces and therefore  the expression
${\det}^{\perp}(\Delta_n)$ is a priori not defined. To give it nevertheless a meaning, one
has to take a closer look on the spectrum of the analytic Laplace operator $\Delta_n$ for
a closed Riemannian manifold.


\subsection{The spectrum of the Laplace operator on closed Riemannian manifolds}
\label{subsec:The_spectrum_of_the_Laplacian_on_closed_Riemannian_manifolds}

Let $M$ be a closed Riemannian manifold. Next we record some basic facts about the
spectrum of the analytic Laplace operator $\Delta_n \colon \Omega^n(M) \to \Omega^n(M)$. Denote by 
$E_{\lambda}(\Delta_n) = \{\omega \in \Omega^n(M) \mid \Delta_n(\omega) = \lambda \cdot\omega\}$ 
the eigenspace of $\Delta_n$ for $\lambda \in \IC$. We call $\lambda$ an
eigenvalue of $\Delta_n$ if $E_{\lambda}(\Delta_n) \not= 0$. It turns out that each
eigenvalue $\lambda$ of $\Delta_n$ is a real number satisfying $\lambda \ge 0$.  Notice
that $E_{\lambda}(\Delta_n)$ and $E_{\mu}(\Delta_n)$ are orthogonal in 
$L^2\Omega^n(M)$ for $\lambda \not= \mu$ since we get
from~\eqref{d_and_delta_are_adjoint_on_smooth_forms_with_compact_support} for 
$\nu_0, \nu_1 \in \Omega^n(M)$
\begin{equation}
\langle \Delta_n(\nu_0),\nu_1\rangle_{L^2} = \langle \nu_0,\Delta_n(\nu_1) \rangle_{L^2},
\label{Delta_n_selfadjoint}
\end{equation}
and hence we get for $\omega \in E_{\lambda}(\Delta_n)$ and $\eta \in E_{\mu}(\Delta_n)$ 
\[
\lambda \cdot \langle \omega, \eta\rangle_{L^2}
=
\langle \lambda \cdot \omega, \eta\rangle_{L^2}
\\
=
\langle \Delta_n(\omega), \eta\rangle_{L^2} 
\\
=
\langle \omega, \Delta_n(\eta)\rangle_{L^2}
\\
 =  
\langle \omega, \mu \cdot \eta\rangle_{L^2}
\\
 = 
\mu \cdot \langle \omega, \eta\rangle_{L^2}.
\]
Moreover, we have the orthogonal decomposition
\[
\bigoplus_{\lambda \ge 0} E_{\lambda}(\Delta_n) = L^2 \Omega^n(M).
\]
We define the \emph{$n$-th-Zeta-function} for $s \in \IC$
\begin{eqnarray}
\zeta_n(s) = \sum_{\lambda > 0} \dim_{\IR}(E_{\lambda}(\Delta_n)) \cdot \lambda^{-s},
\label{zeta_n}
\end{eqnarray}
where $\lambda$ runs through all eigenvalues of $\Delta_n$ with $\lambda > 0$.
Of course it is a priori not clear whether this sums converges. However, the following 
result holds, see for instance~\cite[Section~1.12]{Gilkey(1994)}.

\begin{lemma} \label{lem:meroextension} The Zeta-function $\zeta_n$
  converges absolutely for $s \in S = \{s \in \IC \mid \Real(s) >
  \dim(M)/2\}$ and defines a holomorphic function on $S$. It has  a
  meromorphic extension to $\IC$ which is analytic in zero and whose
  derivative at zero $\left.\frac{d}{ds}\right|_{s = 0} \zeta_n(s)$ lies in $\IR$.
\end{lemma}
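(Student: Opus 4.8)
The plan is to express $\zeta_n$ as a Mellin transform of the heat trace and to read off all three assertions from the small-time and large-time behaviour of that trace, in the spirit of Remark~\ref{rem:heat_operator_analytic}. First I would introduce the \emph{heat trace}
\[
\theta_n(t) := \tr\bigl(e^{-t\Delta_n}\bigr) = \sum_{\lambda \ge 0} \dim_{\IR}\bigl(E_{\lambda}(\Delta_n)\bigr) \cdot e^{-t\lambda}, \qquad t > 0,
\]
which is legitimate because on a closed manifold $e^{-t\Delta_n}$ has a smooth kernel and is trace class, with $\theta_n(t) = \int_M \tr\bigl(e^{-t\Delta_n}(x,x)\bigr)\dvol$. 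Two pieces of information about $\theta_n$ are needed. As $t \to \infty$, since the positive part of the spectrum of $\Delta_n$ is bounded below by a positive constant, the reduced trace $\widetilde{\theta}_n(t) := \theta_n(t) - b_n(M) = \sum_{\lambda > 0} \dim_{\IR}\bigl(E_{\lambda}(\Delta_n)\bigr) \cdot e^{-t\lambda}$ decays exponentially. As $t \to 0^+$ one has the Minakshisundaram--Pleijel asymptotic expansion $\theta_n(t) \sim \sum_{k \ge 0} a_k\, t^{(k - d)/2}$ with $d = \dim(M)$; this short-time expansion is the one genuinely analytic input, and I would simply quote it from~\cite[Sections~1.7 and~1.12]{Gilkey(1994)}.

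Next I would prove absolute convergence of the series~\eqref{zeta_n} on $S = \{s \mid \Real(s) > d/2\}$. The bound $\theta_n(t) = O\bigl(t^{-d/2}\bigr)$ as $t \to 0^+$ yields, by an elementary estimate, the growth bound $\sum_{0 < \lambda \le \Lambda} \dim_{\IR}\bigl(E_{\lambda}(\Delta_n)\bigr) = O\bigl(\Lambda^{d/2}\bigr)$ on the eigenvalue counting function, and this makes $\sum_{\lambda > 0} \dim_{\IR}\bigl(E_{\lambda}(\Delta_n)\bigr) \cdot \lambda^{-s}$ converge absolutely and locally uniformly on $S$, hence defines a holomorphic function there. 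For $s \in S$, inserting $\lambda^{-s} = \frac{1}{\Gamma(s)} \int_0^{\infty} t^{s-1} e^{-t\lambda}\, dt$ and interchanging sum and integral (justified by the same estimates) gives the Mellin transform formula
\[
\zeta_n(s) = \frac{1}{\Gamma(s)} \int_0^{\infty} t^{s-1}\, \widetilde{\theta}_n(t)\, dt.
\]

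Now I would split the integral at $t = 1$. The tail $\int_1^{\infty} t^{s-1} \widetilde{\theta}_n(t)\, dt$ is entire in $s$ by the exponential decay of $\widetilde{\theta}_n$. For the part over $(0,1]$ I would use the expansion: for each $N$, write $\widetilde{\theta}_n(t) = \sum_{k=0}^{N} a_k t^{(k-d)/2} - b_n(M) + R_N(t)$ with $R_N(t) = O\bigl(t^{(N+1-d)/2}\bigr)$; then each $\int_0^1 t^{s-1 + (k-d)/2}\, dt = \bigl(s + (k-d)/2\bigr)^{-1}$ and $\int_0^1 t^{s-1}\, dt = s^{-1}$ is meromorphic on $\IC$, while $\int_0^1 t^{s-1} R_N(t)\, dt$ is holomorphic for $\Real(s) > (d - N - 1)/2$. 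Letting $N \to \infty$ exhibits $\Gamma(s)\zeta_n(s)$ as meromorphic on all of $\IC$ with at worst simple poles, located at $s = (d - k)/2$ for $k \ge 0$ together with $s = 0$. Dividing by $\Gamma(s)$, which vanishes nowhere and has simple poles exactly at $s = 0, -1, -2, \dots$, produces a meromorphic $\zeta_n$ on $\IC$; in particular the at worst simple pole of $\Gamma(s)\zeta_n(s)$ at $s = 0$ is cancelled by the simple pole of $\Gamma(s)$ there, so $\zeta_n$ is holomorphic at $s = 0$.

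Finally, for the reality of $\zeta_n'(0)$ I would argue by reflection: the eigenvalues $\lambda$ and the multiplicities $\dim_{\IR}\bigl(E_{\lambda}(\Delta_n)\bigr)$ are real, so $\zeta_n(\overline{s}) = \overline{\zeta_n(s)}$ for $s \in S$, and by uniqueness of meromorphic continuation this identity holds on all of $\IC$. Hence $\zeta_n$ takes real values on a neighbourhood of $0$ in $\IR$, and being holomorphic there it has real Taylor coefficients, so $\left.\frac{d}{ds}\right|_{s=0}\zeta_n(s) \in \IR$. The main obstacle is the short-time heat-kernel expansion of $\theta_n$: it is the only nontrivial ingredient, and I would take it as known (Gilkey); everything after it is routine Mellin-transform bookkeeping.
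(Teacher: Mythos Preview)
Your argument is correct and is the standard Mellin-transform derivation: Weyl-type eigenvalue growth from the leading heat asymptotics gives absolute convergence on $\Real(s) > d/2$; the Mellin representation $\zeta_n(s) = \Gamma(s)^{-1}\int_0^\infty t^{s-1}\widetilde\theta_n(t)\,dt$ combined with the short-time expansion and the exponential large-time decay gives the meromorphic continuation with at worst simple poles of $\Gamma(s)\zeta_n(s)$, so the simple pole of $\Gamma$ at $s=0$ forces $\zeta_n$ to be holomorphic there; and the Schwarz reflection $\zeta_n(\overline s) = \overline{\zeta_n(s)}$ propagates from $S$ to $\IC$ and yields $\zeta_n'(0)\in\IR$. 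One small remark: when $d$ is even the point $s=0$ receives contributions both from the $k=d$ term of the expansion and from the subtracted $b_n(M)$ term, but these add to an at worst simple pole of $\Gamma(s)\zeta_n(s)$, so your cancellation argument goes through unchanged.

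As for comparison with the paper: the paper does not actually prove this lemma. It simply states the result and refers the reader to~\cite[Section~1.12]{Gilkey(1994)}. What you have written is precisely the argument one finds there (and which the paper also alludes to in Remark~\ref{rem:analytic_torsion_in_terms-of_the_heat_kernel}), so your proposal supplies the proof the paper chose to outsource. There is no alternative approach to compare against.
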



\subsection{The definition of analytic torsion for closed Riemannian manifolds}
\label{subsec:The_definition_of_analytic_torsion_for_closed_Riemannian_manifolds}

In view of Lemma~\ref{lem:meroextension} the
following definition make sense.  It is due to Ray-Singer~\cite{Ray-Singer(1971)} and
motivated by~\eqref{det_and_zeta_fin_dim} and
Lemma~\ref{lem:torsion_in_terms_of_Laplacian}.

\begin{definition}[Analytic torsion]
\label{def:analytic_torsion}
Let $M$ be a closed Riemannian manifold. Define its \emph{analytic torsion}
\[
\rho_{\an}(M) := \frac{1}{2} \cdot \sum_{n \ge 0} (-1)^n \cdot n \cdot \left.\frac{d}{ds}\right|_{s = 0} \zeta_n(s).
\]
\end{definition}

\begin{remark}[Analytic torsion in terms of the heat kernel]
\label{rem:analytic_torsion_in_terms-of_the_heat_kernel}
One can rewrite the analytic torsion also in terms of the heat kernel by
\begin{equation*}
\rho_{\an}(M;V)  :=  \frac{1}{2} \cdot
\sum_{n \ge 0} (-1)^n \cdot n \cdot \frac{d}{ds}
\left.
\frac{1}{\Gamma (s)} \cdot
\int_0^{\infty} t^{s-1} \cdot \theta_n(M)^{\perp} \; dt
\right|_{s = 0},
\end{equation*}
where 
\begin{eqnarray*}
\theta_n(M)(t)
& := &
\int_M \tr\bigl(e^{-t\Delta_n}(x,x)\bigr) \dvol;
\\
\theta_n(M)^{\perp} 
& = &
\theta_n(M)(t) - \dim_{\IR}(H_n(M;\IR));
\\
\Gamma(s) 
& = &
\int_0^{\infty} t^{s-1}e^{-t} dt \quad \text{for} \; \Real(s) > 0,
\end{eqnarray*}
the Gamma-function $\Gamma(s)$ is defined for $s \in \IC$ by meromorphic extension with
poles of order $1$ in $\{n \in \IZ \mid n \le 0\}$ and satisfies $\Gamma(s+1) = s \cdot
\Gamma(s)$ and $\Gamma(n+1) = n{!}$ for $n \in \IZ, n\ge 0$, see for
instance~\cite[Section~3.5.1]{Lueck(2002)}.
\end{remark}


\subsection{Analytic torsion of $S^1$ and the Riemann Zeta-function}
\label{subsec:The_analytic_torsion_of_S1}

Fix a positive real number $\mu$.  Equip $\IR$ with the standard
metric and the unit circle $S^1$ with the Riemannian metric for which
$\IR \to S^1, t \mapsto \exp(2\pi i\mu^{-1}t)$ is
isometric. Then $S^1$ has volume $\mu$.  
The Laplace operator $\Delta^1\colon   \Omega^1(\IR) \to  \Omega^1(\IR)$
sends $f(t) dt$ to $-f^{\prime\prime}(t) dt$. By checking the $\mu$-periodic solutions of
$f^{\prime\prime}(t) = -\lambda f(t)$, one shows that 
$\Delta^1 \colon \Omega^1(S^1) \to \Omega^1(S^1)$ 
 has eigenspaces 
\[
E_{\lambda}(\Delta_1) =
\begin{cases}
\operatorname{span}_{\IR}\{f_n   dt, g_n dt\} & \text{for }\; \lambda = (2\pi\mu^{-1}n)^2, n\ge 1;
\\
\operatorname{span}_{\IR}\{dt\} &  \text{for}\;  \lambda = 0;
\\
\{0\} & \text{otherwise},
\end{cases}
\]
where $f_n(exp(2\pi  i\mu^{-1}t)) = \cos(2\pi\mu^{-1}nt)$ and 
$g_n(exp(2\pi  i\mu^{-1}t)) = \sin(2\pi\mu^{-1}nt)$.
Denote by 
\begin{eqnarray}
\zeta_{\Riem}(s) = \sum_{n\ge 1}n^{-s}
\label{Riemann_Zeta_function}
\end{eqnarray}
the \emph{Riemannian Zeta-function}.
We have
\[\zeta_1(s) = \sum_{n \ge 1} 2 \cdot (2\pi\mu^{-1}n)^2)^{-s}.
\]
As $\zeta_{\Riem}(0) = -\frac{1}{2}$ and
$\zeta_{\Riem}^{\prime}(0) = -\frac{\ln(2\pi)}{2}$ hold (see
Titchmarsh~\cite{Titchmarsh(1951)}), we obtain
\begin{eqnarray*}
\rho_{\an}(S^1) 
& = & 
\frac{1}{2} \cdot \sum_{n \ge 0} (-1)^n \cdot n \cdot \left.\frac{d}{ds}\right|_{s = 0} \zeta_n(s)
\\
 & = & 
- \frac{1}{2} \cdot \left.\frac{d}{ds}\right|_{s = 0} \zeta_1(s)
\\
 & = & 
- \left.\frac{d}{ds}\right|_{s = 0} \left(\sum_{n \ge 1} ((2\pi\mu^{-1}n)^2)^{-s}\right)
\\
 & = & 
- \left.\frac{d}{ds}\right|_{s = 0} \left(\exp(-2 \cdot \ln(2\pi \mu^{-1}) \cdot s) \cdot \zeta_{\Riem}(2s)\right)
\\
 & = & 
- \left(\left.\frac{d}{ds}\right|_{s = 0} \exp(-2 \cdot \ln(2\pi \mu^{-1}) \cdot s)\right) \cdot \zeta_{\Riem}(0)
\\
& & \quad \quad  -  
\exp(-2 \cdot \ln(2\pi \mu^{-1} \cdot 0)  \cdot \left.\frac{d}{ds}\right|_{s = 0} \zeta_{\Riem}(2s)
\\
& = & 
2 \cdot \ln(2\pi \mu^{-1}) \cdot \zeta_{\Riem}(0) - 2\cdot  \left.\frac{d}{ds}\right|_{s = 0} \zeta_{\Riem}(s)
\\
& = & 
2 \cdot \ln(2\pi \mu^{-1}) \cdot \frac{-1}{2} - 2\cdot  \frac{-\ln(2\pi)}{2}
\\
& = & 
- \ln(2\pi ) +  \ln(\mu)  + \ln(2\pi)
\\
& = & \ln(\mu).
\end{eqnarray*}
Notice that this agrees with $\rho^{\topo}(S^1)$ by~\eqref{rho_topo(rational_homology_sphere)}.


\subsection{The equality of analytic and topological torsion for closed Riemannian manifolds:
The Cheeger-M\"uller Theorem}
\label{subsec:The_Cheeger-Mueller_Theorem}

The following celebrated result was proved independently 
by Cheeger~\cite{Cheeger(1979)} and M\"uller~\cite{Mueller(1978)}.

\begin{theorem}[Equality of analytic and Reidemeister torsion]
\label{the:Equality_of_analytic_and_Reidemeister_torsion}
Let $M$ be a closed Riemannian manifold. Then
\[
\rho_{\an}(M) = \rho_{\topo}(M).
\]
\end{theorem}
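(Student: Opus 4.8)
The plan is to reduce the equality $\rho_{\an}(M) = \rho_{\topo}(M)$ to an anchor computation plus a surgery/deformation argument that controls how both sides change under cutting and pasting. First I would fix a smooth triangulation $t \colon K \to M$ and make the comparison concrete: by Definition~\ref{def:Topological_Torsion_of_a_closed_Riemannian_manifold} and Lemma~\ref{lem:torsion_in_terms_of_Laplacian}, $\rho_{\topo}(M)$ is the combinatorial torsion of $\IR\otimes_{\IZ} C_*(K)$ corrected by the $\det^{\perp}$ of the identity maps $H_n(\IR\otimes_{\IZ} C_*(K)) \to (H_n, \kappa^{\harm}_n)$, i.e.\ by the ratio between the combinatorial inner product on homology and the $L^2$-inner product coming from harmonic forms via Theorem~\ref{the:Hodge_de_Rham_Theorem}. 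So the real content is an identity between a spectral object built from $\Delta_n$ on $\Omega^n(M)$ and a spectral object built from the combinatorial $\Delta_n$ on $C_*(K)$, once the homology bases are matched up through the de Rham isomorphism. The key technical tool is the de Rham map $A^*$ of Theorem~\ref{the:De_Rham_Theorem}, which must be upgraded to a chain homotopy equivalence that is ``almost isometric'' at the level of the full $L^2$-completions after a careful subdivision/smoothing; Dodziuk's Whitney-form construction is the standard device here.

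Next I would carry out the two main steps of the Cheeger/M\"uller scheme. Step one: establish the statement for a well-chosen class of models where both torsions can be computed by hand — for instance mapping tori of isometries, or more basically lens spaces and $S^1$ (the computation in Subsection~\ref{subsec:The_analytic_torsion_of_S1} already does $S^1$, and one checks there that $\rho_{\an}(S^1) = \rho_{\topo}(S^1)$). Step two: show that the difference $\rho_{\an}(M) - \rho_{\topo}(M)$ behaves additively under gluing along codimension-one submanifolds in exactly the same way on both sides. For the topological side this additivity is Remark~\ref{rem:Compact_manifolds_with_boundary_and_glueing_formula}, following from Lemma~\ref{lem:additivity_of-torsion_fin_dim} and Lemma~\ref{lem:homotopy_invariance_of_rho(C_ast,kappa)}. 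For the analytic side one needs the corresponding gluing formula for Ray--Singer torsion (the Burghelea--Friedlander--Kappeler / Mayer--Vietoris type formula), which is the genuinely hard analytic input: it requires understanding the behavior of $\zeta_n(s)$, hence of the heat kernel $e^{-t\Delta_n}(x,y)$, near the cutting hypersurface, and the anomaly term there is precisely what forces the Euler-characteristic-of-the-boundary correction mentioned in the introduction. Once both sides obey the same gluing law and agree on the building blocks, a decomposition of an arbitrary closed $M$ into handles (or into pieces each handled by Step one) propagates the equality to all of $M$.

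An alternative and in some ways cleaner route, which I would at least sketch as a fallback, is the variational/deformation argument: interpolate between the Riemannian metric data and the combinatorial data by a one-parameter family — e.g.\ scale the metric in the normal directions of the simplices, or use Bismut--Zhang's Witten-deformation of the de Rham complex by $e^{-T\mathfrak{f}}d\,e^{T\mathfrak{f}}$ for a Morse function $\mathfrak{f}$ adapted to $K$. One differentiates $\rho_{\an}$ along the family, identifies the derivative as an explicit local term (an integral of a characteristic form), and shows this matches the corresponding variation of $\rho_{\topo}$ through the harmonic correction terms; in the limit $T \to \infty$ the Witten complex collapses onto the combinatorial complex $C_*(K)$, and the accumulated discrepancy is exactly the $\det^{\perp}$ correction built into Definition~\ref{def:Topological_Torsion_of_a_closed_Riemannian_manifold}.

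The main obstacle, on either route, is the analytic control of the small-time asymptotics of $\tr(e^{-t\Delta_n}(x,x))$ and, in the gluing approach, of the heat kernel across the separating hypersurface: one must show that the meromorphic continuation of $\zeta_n$ from Lemma~\ref{lem:meroextension} splits additively up to a computable local anomaly, and that this anomaly cancels in the alternating sum $\sum (-1)^n n\,\zeta_n'(0)$ — mirroring how the pure handle-attachment changes in the combinatorial $\det^{\perp}(c_n)$ telescope in Lemma~\ref{lem:torsion_in_terms_of_Laplacian}. Matching the harmonic-form inner products with the combinatorial ones under subdivision (showing the Whitney-to-harmonic comparison map has $\det^{\perp} \to 1$ in a suitable refinement limit) is a second, more bookkeeping-heavy but still nontrivial, ingredient.
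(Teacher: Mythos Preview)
The paper does not actually prove this theorem; it cites Cheeger and M\"uller and then gives a one-paragraph sketch of M\"uller's strategy: show that the difference $\rho_{\an}(M)-\rho_{\topo}(M)$ is independent of the Riemannian metric (already known to Ray--Singer), show it is multiplicative for products, show it depends only on the oriented bordism class of $M$, and then verify equality on generators of the oriented bordism ring. The Dodziuk--Patodi eigenvalue convergence under mesh refinement is mentioned as a key analytic input.

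Your proposal takes two different routes, neither of which is the bordism argument the paper describes. Your fallback route --- the Witten deformation \`a la Bismut--Zhang --- is a genuine and complete alternative proof in the literature, and your description of it is accurate in outline. Your primary route, however, has a structural gap: you want to check the identity on ``building blocks'' such as $S^1$ and lens spaces and then propagate it via a gluing formula along codimension-one hypersurfaces, but an arbitrary closed manifold does not decompose into such closed pieces; a handle decomposition gives pieces with boundary (disks, handles), and the analytic gluing formula you invoke (Burghelea--Friedlander--Kappeler / Vishik) then carries anomaly terms whose control is essentially as hard as the theorem itself --- indeed those gluing formulas were proved \emph{after} Cheeger--M\"uller and in places rely on it. By contrast, M\"uller's bordism argument sidesteps the need for any analytic gluing formula: once the difference is a bordism invariant, one only has to compute on a finite explicit list of closed manifolds. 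What the bordism route buys is that the hard analysis is compressed into the metric-independence and bordism-invariance statements; what your Witten-deformation route buys is a more direct, local comparison that also generalizes (as Bismut--Zhang showed) to non-unitary flat bundles.
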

It was already known before the final proof of
Theorem~\ref{the:Equality_of_analytic_and_Reidemeister_torsion} that the difference
$\rho_{\an}(M) - \rho_{\topo}(M)$ is independent of the Riemannian metric and
$\rho_{\an}(M)$ and $\rho_{\topo}(M)$ satisfies analogous product formulas so that the
desired equality holds for a product $M \times N$ if it holds for both $M$ and $N$.
M\"uller's strategy was to show that the difference $\rho_{\an}(M) - \rho_{\topo}(M)$ depends
only on the bordism class of $M$ and then verify the equality on generators of the
oriented bordism ring. He also uses an interesting result of Dodziuk and
Patodi~\cite[Theorem~3.7]{Dodziuk-Patodi(1976)} that the eigenvalues of the combinatorial Laplace
operator $\Delta_n(K)$ of a smooth triangulation $K$ of $M$ converge to the eigenvalues of
the analytic Laplace operator $\Delta_n(M)$ if the mesh, which is the supremum over the
distances with respect to the metric coming from the Riemannian metric of any two vertices
spanning a $1$-simplex, of the triangulation $K$ goes to zero.


\subsection{The relation between analytic and topological torsion for compact Riemannian manifolds}
\label{subsec:The_relation_between_analytic_and_topological_torsion_for_compact_Riemannian_manifolds}

Let $M$ be a compact Riemannian manifold. Suppose that its boundary $\partial M$ is
written as disjoint union $\partial_0 M\coprod \partial_1 M$, where $\partial_i M$ itself
is a disjoint union of path components of $\partial M$.  In particular $\partial_i M$
itself is a closed manifold. We will assume that the Riemannian metric on $M$ is a product
near the boundary and we will equip $\partial M$ with the induced Riemannian metric.  By
introducing appropriate boundary condition for the Laplace operator one can define
$\rho_{\an}(M,\partial_0M)$ and $\rho_{\topo}(M,\partial_0 M)$. The next result is proved
in~\cite[Corollary~5.1]{Lueck(1993)}.

\begin{theorem}[The relation between analytic and topological torsion for compact Riemannian manifolds]
\label{the:The_relation_between_analytic_and_topological_torsion_for_compact_Riemannian_manifolds}
We get under the conditions above
\[\rho_{\an}(M,\partial_0M)  = 
\rho_{\topo}(M,\partial_0 M) + \frac{\ln(2)}{2}
\cdot \chi(\partial M).
\]
\end{theorem}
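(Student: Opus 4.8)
The plan is to reduce the statement to the Cheeger--M\"uller Theorem~\ref{the:Equality_of_analytic_and_Reidemeister_torsion} for a closed manifold obtained by doubling. Since the metric is a product near $\partial M$, the double $N:=M\cup_{\partial M}M$ is a closed Riemannian manifold, and the interchange of the two copies of $M$ is an isometric involution $\tau$ of $N$ with fixed-point set $\partial M$. I would first treat the two extreme splittings $\partial_0 M=\emptyset$ (absolute, i.e.\ Neumann, conditions) and $\partial_0 M=\partial M$ (relative, i.e.\ Dirichlet, conditions) for an arbitrary compact $M$ with boundary, and deduce the general splitting afterwards by applying the same reduction to $M\cup_{\partial_1 M}M$ with Dirichlet conditions on its (closed) boundary $\partial_0 M\coprod\partial_0 M$, tracking Euler characteristics via the glueing formula of Remark~\ref{rem:Compact_manifolds_with_boundary_and_glueing_formula}.

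The analytic side of the doubling is clean. Because $\tau$ is an isometry, the Laplace operator $\Delta_n$ on $N$ commutes with $\tau^\ast$, so $L^2\Omega^n(N)$ splits orthogonally into the $\tau$-invariant and the $\tau$-anti-invariant subspaces; restricting forms to one copy of $M$ identifies the $\tau$-invariant forms with the domain of the Laplace operator with absolute boundary conditions on $\partial M$ and the $\tau$-anti-invariant forms with the domain with relative boundary conditions (here the product structure of the metric near $\partial M$ enters). Hence the spectrum of $\Delta_n$ on $N$ is, with multiplicities, the disjoint union of the spectra of the absolute and the relative Laplace operators on $M$, so the $\zeta$-function of $N$ in degree $n$ is the sum of the two $\zeta$-functions on $M$ as meromorphic functions on $\IC$, and therefore $\rho_{\an}(N)=\rho_{\an}(M,\emptyset)+\rho_{\an}(M,\partial M)$.

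On the topological side I would pick a $\tau$-equivariant smooth triangulation of $N$ restricting to a smooth triangulation of $M$ that has $\partial M$ as a subcomplex. Then $\IR\otimes_{\IZ}C_\ast(N)$ splits under $\tau$ into a $(+1)$-eigenpart chain isomorphic to $C_\ast(M;\IR)$ and a $(-1)$-eigenpart chain isomorphic to $C_\ast(M,\partial M;\IR)$; these isomorphisms are not isometric, because each simplex contained in $\partial M$ occurs once in $N$ whereas each of the remaining simplices of $M$ occurs twice, which introduces factors $\sqrt{2}$. Likewise the harmonic Hilbert structure on $H_\ast(N;\IR)$, restricted to the two eigenparts, differs by factors $2$ from the structures $\kappa^{\harm}$ on $H_\ast(M)$ and on $H_\ast(M,\partial M)$ used to define the topological torsions of $(M,\emptyset)$ and $(M,\partial M)$ (extended as in Remark~\ref{rem:Compact_manifolds_with_boundary_and_glueing_formula}), since an (anti-)invariant harmonic form on $N$ has twice the $L^2$-norm of its restriction to $M$. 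Feeding these rescalings into the additivity of torsion (Lemma~\ref{lem:additivity_of-torsion_fin_dim}), the effect of rescaling a Hilbert structure on $\det^{\perp}$ (Lemma~\ref{lem:main_properties_of_det_perp}), and the homotopy invariance of Lemma~\ref{lem:homotopy_invariance_of_rho(C_ast,kappa)}, all powers of $2$ assemble into a single correction, which after simplification (using $\chi(M,\partial M)=\chi(M)-\chi(\partial M)$) equals $\ln(2)\cdot\chi(\partial M)$; thus $\rho_{\topo}(N)=\rho_{\topo}(M,\emptyset)+\rho_{\topo}(M,\partial M)+\ln(2)\cdot\chi(\partial M)$.

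Comparing the two decompositions with the Cheeger--M\"uller equality $\rho_{\an}(N)=\rho_{\topo}(N)$ gives one linear relation between the defects $\rho_{\an}(M,\emptyset)-\rho_{\topo}(M,\emptyset)$ and $\rho_{\an}(M,\partial M)-\rho_{\topo}(M,\partial M)$. To split it, I would invoke Poincar\'e--Lefschetz duality: the Hodge $\ast$-operator conjugates the absolute Laplace operator in degree $n$ to the relative one in degree $\dim M-n$, so the corresponding $\zeta$-functions agree, and the dual of the cellular chain complex realizes the matching symmetry on the topological side compatibly (again via $\ast$ on harmonic forms). For $\dim M$ odd this forces the two defects to be equal, so each equals $\frac{\ln(2)}{2}\cdot\chi(\partial M)$, which is the assertion; the general splitting then follows as described, since the involution on $M\cup_{\partial_1 M}M$ again produces a correction $\ln(2)$ times the Euler characteristic of its fixed set $\partial_1 M$. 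The main obstacle is exactly the bookkeeping in the topological decomposition above: one must simultaneously track the $\sqrt{2}$'s produced by the shared boundary simplices and the $2$'s produced by the harmonic Hilbert structures, and it is here that the hypothesis of a product metric near $\partial M$ is indispensable (without it the boundary correction ceases to be a constant times $\chi(\partial M)$). A secondary point needing separate care is the case $\dim M$ even, in which $\chi(\partial M)=0$ and the duality relation only says that the two defects are negatives of one another; there one argues, in the spirit of M\"uller's proof of the closed case and of Remark~\ref{rem:Poincare_duality}, that the defect is independent of the metric and bordism-invariant and then evaluates it on generators.
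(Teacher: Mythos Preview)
Your doubling strategy is precisely the one the paper indicates (see Remark~\ref{rem:the_strategy_of_proof}): pass to the closed $\IZ/2$-manifold $N=M\cup_{\partial M}M$ and compare the two sides there. Where you diverge from the paper is in how you separate the absolute and the relative contributions after doubling. The paper does \emph{not} use the non-equivariant Cheeger--M\"uller theorem on $N$ together with Poincar\'e--Lefschetz duality; instead it applies the $\IZ/2$-\emph{equivariant} Cheeger--M\"uller theorem for the closed double. The $\IZ/2$-equivariant torsions of $N$ take values in $\IR\otimes_{\IZ}\Rep_{\IR}(\IZ/2)$, so they record the invariant and anti-invariant parts separately and yield two equations at once, rather than the single equation you obtain from the non-equivariant theorem. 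The closed equivariant input is Lott--Rothenberg in odd dimension, supplemented by the Poincar\'e torsion in even dimension.

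Your substitute for the second equation, Poincar\'e--Lefschetz duality, is fine when $\dim M$ is odd: then $\ast$ identifies the absolute and relative spectra, $\sum_n(-1)^n\zeta_n\equiv 0$ kills the leftover term, and the same duality on the cellular side (whose torsion vanishes non-equivariantly) matches the harmonic Hilbert structures, so the two defects coincide and each equals $\tfrac{\ln 2}{2}\chi(\partial M)$. But for $\dim M$ even you only get that the two defects are negatives of one another, which together with $\chi(\partial M)=0$ is a tautology. Your proposed fix --- ``the defect is metric-independent and bordism-invariant, then evaluate on generators'' --- is a genuine gap: metric-independence of the defect on a manifold \emph{with boundary} is already a substantial variation-formula computation (this is where the product-metric hypothesis really bites), and ``bordism-invariant'' is not even a well-posed statement for manifolds with boundary without specifying what bordisms you allow and why the defect behaves well under them. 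The paper's route via the equivariant theorem sidesteps exactly this difficulty, because the closed equivariant comparison (with the Poincar\'e torsion correction) is established directly and uniformly in both parities before one ever doubles; your shortcut avoids the equivariant machinery in odd dimension but does not, as written, close the even-dimensional case.
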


\begin{example}[Unit interval] \label{exa:I} Equip $I= [0,1]$ with the standard metric
  scaled by $\mu > 0$. The volume form is then $\mu dt$. The analytic Laplace operator
  $\Delta_1 \colon \Omega^1 I \to \Omega^1 I$ maps $f(t) dt$ to $- \mu^{-2}
  f^{\prime\prime}(t) dt$.  Denote by $E_{\lambda}(\Delta_1(I))$ and
  $E_{\lambda}(\Delta_1(I,\partial I))$ the eigenspace of $\Delta_1$ for $\lambda \ge 0$,
  where for $ \Delta_1(I)$ and $\Delta_1(I,\partial I)$ respectively we require for a
  $1$-form $f(t) dt$ the boundary condition $f(0) = f(1)= 0$ and $f'(0) = f'(1)= 0$
  respectively.  If $\lambda = (\pi \mu^{-1} n)^2$ 
  for $n \in \IZ ,n  \ge 1$, then
\begin{eqnarray*}
E_{\lambda}(\Delta_1(I)) 
& = &
\operatorname{span}_{\IR}\{\sin (\pi nt)dt\};
\\
E_{\lambda}(\Delta_1(I,\partial I)) 
& = &
\operatorname{span}_{\IR}\{\cos (\pi nt)dt)\},
\end{eqnarray*}
and $E_\lambda(\Delta_1(I))  = E_{\lambda}(\Delta_1(I,\partial I) = 0$
if $\lambda$ is not of this form. As $\zeta_{\Riem}(0) = -\frac{1}{2}$ and
$\zeta_{\Riem}^{\prime}(0) = -\frac{\ln(2\pi)}{2}$ hold (see
Titchmarsh\cite{Titchmarsh(1951)}),  we get 
\[\zeta_1(I) =
\zeta_{1}(I, \partial I) = \left( \frac{\pi}{\mu} \right)^{-2s}
\cdot \zeta_{Rie}(2s).
\]
This implies
\[
\rho_{\an}(I) = \rho_{\an}(I,\partial I) = \ln(2\mu).
\]
A calculation similar to the one of Subsection~\ref{subsec:Topological_torsion_of_rational_homology_spheres}
shows
\[
\rho_{\topo}(I) = \rho_{\topo}(I,\partial I) = \ln(\mu).
\]
This is compatible with 
Theorem~\ref{the:The_relation_between_analytic_and_topological_torsion_for_compact_Riemannian_manifolds}
since $\chi(\partial I) = 2$.
\end{example}

\begin{remark}[Twisting with finite-dimensional orthogonal representations]
  \label{rem:Twisting_with_finite-dimensional_orthogonal_representations_again}
  For an orthogonal finite-dimen\-sio\-nal
  representation $V$ of $\pi_1(M)$ for a compact  Riemannian manifold $M$
  one can also define the  $V$-twisted analytic torsion $\rho_{\an}(M, \partial_0 M;V)$ and $V$-twisted
  topological torsion $\rho_{\topo}(M,\partial_0 M;V)$. 
  Theorem~\ref{the:The_relation_between_analytic_and_topological_torsion_for_compact_Riemannian_manifolds}
  generalizes to 
\[\rho_{\an}(M,\partial_0M;V)  = 
\rho_{\topo}(M,\partial_0 ;V) + \frac{\ln(2)}{2}
\cdot \dim_{\IR}(V) \cdot \chi(\partial M).
\]
\end{remark}

\begin{remark}[Elliptic operators and indices]
\label{rem:elliptic_operators_and_indices}
The Euler characteristic term in
Theorem~\ref{the:The_relation_between_analytic_and_topological_torsion_for_compact_Riemannian_manifolds}
can be interpreted as the index of the de Rham complex. This leads to the following
question.

Let $P^*$ be an elliptic complex of partial differential operators.  Denote by
$\Delta(P^*)_*$ the associated Laplace operator. It is an elliptic positive self-adjoint
partial differential operator in each dimension. Hence its analytic torsion
$\rho_{\an}(P^*)$ can be defined as done before for the ordinary Laplace operator. Suppose
that the complex $P^*$ restricts on the boundary of $M$ to an elliptic complex $\partial
P^*$ in an appropriate sense. Can one find a more or less topological invariant
$\rho_{\topo}(P^*)$ such that the following equation holds
\[
\rho_{\an}(P^*) = \rho_{\topo}(P^*) + \frac{\ln(2)}{2} \cdot \operatorname{index}(\partial P^*).
\]
If we take $P^*$ to be the de Rham complex and put $\rho_{\topo}(P^*)$
to be the topological torsion $\rho_{\topo}(M)$, then the equation above just reduces 
to  Theorem~\ref{the:The_relation_between_analytic_and_topological_torsion_for_compact_Riemannian_manifolds}.
\end{remark}


\typeout{------------ Section 6: Equivariant torsion for actions of finite groups-------------}

\section{Equivariant torsion for actions of finite groups}
\label{sec:Equivariant_torsion_for_actions_of_finite_groups}

Throughout this section $G$ is a finite group.
Let $M$ be a compact Riemannian manifold. Suppose that its boundary $\partial M$ is
written as the  disjoint union $\partial_0 M\coprod \partial_1 M$, where $\partial_i M$ itself is
a disjoint union of path components of $\partial M$.  Let $G$ be a finite group acting by
isometries on $M$.

Let $\Rep_{\IR}(G)$ be the real representation ring of $G$.
Denote by $K_1(\IR G)^{\IZ/2}$ the $\IZ/2$-fixed point set of the $\IZ/2$-action on $K_1(\IR G)$ 
which comes from the involution of rings
$\IR G \to \IR G, \; \sum_{g \in G} r_g \cdot g \mapsto \sum_{g \in G} r_g \cdot g^{-1}$.
Then one can define \emph{the equivariant analytic torsion}
\[
\rho_{\an}^G(M,\partial_1 M) \in \IR \otimes_{\IZ} \Rep_{\IR}(G),
\]
the \emph{equivariant topological torsion}
\[
\rho_{\topo}^G(M,\partial_1 M) \in K_1(\IR G)^{\IZ/2},
\]
the \emph{Poincar\'e torsion}
\[
\rho_{\pd}^G(M,\partial_1 M) \in K_1(\IR G)^{\IZ/2},
\]
and the \emph{equivariant Euler characteristic}
\[
\chi^G(\partial M) \in \IR \otimes_{\IZ} \Rep_{\IR}(G).
\]
The analytic torsion $\rho_{\an}^G(M,\partial_0 M)$ is defined analogously
to the analytic torsion in the non-equivariant case, one just takes into account
that the eigenspaces $E_{\lambda}(\Delta_n)$ determine elements in
$\Rep_{\IR}(G)$ and counts it as an element in $\Rep_{\IR}(G)$ instead of only
counting its dimension. The topological torsion is defined in terms of the cellular chain complex
of an equivariant triangulation. The equivariant Euler characteristic $\chi^G(\partial_0 M)$
is given by $\sum_{n \ge 0} (-1)^n \cdot [H_n(\partial M;\IR)]$ taking again into account
that $H_n(\partial M;\IR)$ is a finite-dimensional $G$-representation. 
A new phenomenon is represented by the Poincar\'e torsion $\rho_{\pd}^G(M,\partial_0 M)$
which measures the deviation from equivariant Poincar\'e duality being simple. It is defined
in terms of the $\IZ G$-chain map
$- \cap [M] \colon C^{\dim(M) - *}(M,\partial_1M) \to C_*(M,\partial_0 M)$
which is a $\IZ$-chain homotopy equivalence but not necessarily a 
$\IZ G$-chain homotopy equivalence.
If $M$ has no boundary and has odd dimension,  or  if $G$ acts freely, then
$\rho_{\pd}^G(M,\partial_0 M)$ vanishes.

Denote by $\widehat{\Rep}_{\IR}(G)$ the subgroup of $\Rep_{\IR}(G)$
generated by the irreducible representations of real or complex type.
The following result is proved in~\cite[Theorem~4.5]{Lueck(1993)}.

\begin{theorem}[Equivariant torsion]
\label{the:Equivariant_torsion}
Suppose that the Riemannian metric on $M$ is a product near the boundary.
Then there is an isomorphism
\[
\Gamma_1 \oplus \Gamma_2: K_1(\IR G)^{\IZ/2} \xrightarrow{\cong}
\IR \otimes_{\IZ} \Rep_{\IR}(G) \oplus (\IZ/2 \otimes_{\IZ} \widehat{\Rep}_{\IR}(G),
\]
and we have
\[\rho_{\an}^G(M,\partial_1 M)   =  \Gamma_1\bigl(\rho_{\topo}^G(M,\partial_1 M)\bigr) 
- \frac{1}{2}\cdot \Gamma_1\bigl(\rho_{\pd}^G(M,\partial_1 M)\bigr) + \frac{\ln(2)}{2}\cdot \chi^G(\partial M),
\]
and
\[
\Gamma_2\bigl(\rho_{\topo}^G(M,\partial_1 M)\bigr)  = \Gamma_2\bigl(\rho_{\pd}^G(M,\partial_1 M)\bigr)  =  0.
\]
\end{theorem}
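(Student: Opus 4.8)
\emph{The splitting of $K_1(\IR G)^{\IZ/2}$.} By Wedderburn's theorem $\IR G \cong \prod_{i} M_{n_i}(D_i)$, indexed by the isomorphism classes of irreducible real $G$-representations $V_i$, with $D_i = \hom_{\IR G}(V_i,V_i)$ equal to $\IR$, $\IC$, or $\IH$ according as $V_i$ has real, complex, or quaternionic type; hence $K_1(\IR G) \cong \prod_i K_1(D_i)$ with $K_1(D)$ equal to $\IR^{\times}$, $\IC^{\times}$, or $\IR_{>0}$ respectively, via the ordinary, resp.\ the Dieudonn\'e, determinant. The involution $\sum_g r_g \cdot g \mapsto \sum_g r_g \cdot g^{-1}$ is the adjoint for a $G$-invariant inner product, so on the $i$-th factor it is transposition, conjugate transposition, or quaternionic conjugation composed with transposition, inducing on $K_1$ the identity on $\IR^{\times}$, complex conjugation on $\IC^{\times}$, and the identity on $\IR_{>0}$. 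Passing to $\IZ/2$-fixed points, a factor of real or complex type contributes $\IR^{\times} \cong \IR \times \IZ/2$ (the logarithm of the absolute value, and the sign, of the determinant) and a quaternionic factor contributes $\IR_{>0} \cong \IR$; reading off the $\IR$-coordinates defines $\Gamma_1$ with values in $\bigoplus_i \IR = \IR \otimes_{\IZ} \Rep_{\IR}(G)$ and reading off the $\IZ/2$-coordinates defines $\Gamma_2$ with values in $\bigoplus_{i\text{ of real or complex type}} \IZ/2 = \IZ/2 \otimes_{\IZ} \widehat{\Rep}_{\IR}(G)$, and $\Gamma_1 \oplus \Gamma_2$ is then an isomorphism by inspection.

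\emph{The vanishing of $\Gamma_2$ on $\rho_{\topo}^G$ and $\rho_{\pd}^G$.} This falls out of the definitions: each of $\rho_{\topo}^G(M,\partial_1 M)$ and $\rho_{\pd}^G(M,\partial_1 M)$ is an alternating sum of terms $\ln({\det}^{\perp}(\cdot))$ attached to linear maps of Hilbert $\IR G$-modules --- the differentials of the cellular $\IZ G$-chain complex of an equivariant triangulation, those of the mapping cone of $-\cap[M]$, and the comparison maps between Hilbert structures on homology. Since ${\det}^{\perp}(f)$ depends only on the positive self-adjoint $G$-automorphism $(f^{*}f)^{\perp}$, and such an operator has strictly positive (Dieudonn\'e) determinant in every Wedderburn factor of real or complex type, the $K_1(\IR G)$-class underlying each of these torsions has trivial sign component; thus $\Gamma_2(\rho_{\topo}^G(M,\partial_1 M)) = \Gamma_2(\rho_{\pd}^G(M,\partial_1 M)) = 0$. (The same observation shows a posteriori that these classes lie in $K_1(\IR G)^{\IZ/2}$.)

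\emph{The main identity.} Write $E(M) \in \IR \otimes_{\IZ} \Rep_{\IR}(G)$ for the difference of the two sides of the claimed equation; the goal is to prove $E(M) = 0$. First I would establish metric-independence: the variation of $\rho_{\an}^G$ under a change of $G$-invariant metric that is a product near $\partial M$ is a local expression given by the equivariant analogue of the Ray--Singer variation formula \cite{Ray-Singer(1971)}, and it cancels against the variation of the harmonic Hilbert structures entering $\rho_{\topo}^G$, whereas $\rho_{\pd}^G$, $\chi^G(\partial M)$ and $\Gamma_1$ do not involve the metric. Combined with additivity of $E$ under glueing of $G$-manifolds along an invariant codimension-one submanifold --- proved from Lemma~\ref{lem:additivity_of-torsion_fin_dim} and Lemma~\ref{lem:homotopy_invariance_of_rho(C_ast,kappa)} on the topological side, the analytic glueing anomaly being again local and matching the topological glueing terms, so that $E$ is in effect a $G$-bordism invariant --- this reduces the statement to the case of a free $G$-action: there one passes to $\overline M = M/G$ and to the flat orthogonal bundles $\underline{V_i} = M \times_G V_i$, the $V_i$-components of $\rho_{\an}^G$ and $\rho_{\topo}^G$ become, up to the multiplicity of $V_i$, the twisted torsions $\rho_{\an}(\overline M,\partial_1\overline M;\underline{V_i})$ and $\rho_{\topo}(\overline M,\partial_1\overline M;\underline{V_i})$, $\rho_{\pd}^G$ vanishes for free actions, and $E(M)=0$ follows from the twisted Cheeger--M\"uller theorem with its boundary refinement in Remark~\ref{rem:Twisting_with_finite-dimensional_orthogonal_representations_again}. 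The passage from free to general actions is by induction on the number of orbit types, excising invariant tubular neighbourhoods of the non-principal orbits, whose contribution is controlled by $G$-vector-bundle data over strictly lower strata. The details are carried out in \cite[Section~4]{Lueck(1993)}.

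\emph{The main obstacle.} The hard part is this last induction step together with the bookkeeping of the Poincar\'e term. Analytically the Hodge $\ast$-operator provides a \emph{$G$-equivariant isometry} $\calh^n(M) \cong \calh^{\dim M - n}(M)$, so there is no analytic Poincar\'e anomaly, whereas topologically $-\cap[M] \colon C^{\dim M - *}(M,\partial_1 M) \to C_*(M,\partial_0 M)$ is only a $\IZ$-chain homotopy equivalence and its deviation from being a $\IZ G$-chain homotopy equivalence is precisely $\rho_{\pd}^G$; tracing how the choice between $C_*(M,\partial_0 M)$ and $C^{\dim M - *}(M,\partial_1 M)$ enters the definition of $\rho_{\topo}^G$ is what forces the coefficient $-\tfrac12$ in front of $\Gamma_1(\rho_{\pd}^G)$, and keeping this consistent with the glueing formula on each stratum is the delicate point. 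A second, more routine, ingredient is the equivariant heat-kernel asymptotics underlying the $\Rep_{\IR}(G)$-valued version of Lemma~\ref{lem:meroextension}, which is needed merely to make $\rho_{\an}^G$ well-defined.
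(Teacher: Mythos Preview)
The paper does not prove this theorem; it cites \cite[Theorem~4.5]{Lueck(1993)} and then sketches the strategy in Remark~\ref{rem:the_strategy_of_proof}. That strategy differs from yours in its central device. Rather than stratifying by orbit type and gluing, one \emph{doubles}: from the compact $G$-manifold $M$ one forms the closed $(G\times\IZ/2)$-manifold $M\cup_{\partial M}M$, and observes that $\rho_{\an}^{G\times\IZ/2}$ of the double encodes both $\rho_{\an}^G(M)$ and $\rho_{\an}^G(M,\partial M)$, and likewise on the topological side --- but the two comparison formulas differ, and that discrepancy is precisely the $\tfrac{\ln 2}{2}\,\chi^G(\partial M)$ term. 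This reduces everything to closed $G$-manifolds, where the odd-dimensional case is Lott--Rothenberg~\cite{Lott-Rothenberg(1991)} (who work directly with the equivariant heat kernel, not by reduction to free actions) and the Poincar\'e torsion is identified as the correction in even dimensions. No induction on orbit types and no analytic gluing formula are used.

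Your route --- metric independence, an analytic gluing anomaly making $E$ a bordism-type invariant, reduction to free actions, then twisted Cheeger--M\"uller on the quotient --- is a different programme, and variants of it do appear in later literature. But as written the step ``induction on the number of orbit types, excising invariant tubular neighbourhoods of the non-principal orbits'' does not terminate at free actions: the excised tube is a $G$-disk bundle over a lower stratum and still carries a non-free $G$-action on each fibre, so you have traded one non-free problem for another (of lower dimension), and how that recursion closes up --- in particular how $\rho_{\pd}^G$ and the boundary term propagate through it --- is exactly the work you have deferred to the citation. The doubling trick sidesteps this by eliminating the boundary first and then invoking an existing theorem for the closed case. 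Your treatment of the $K_1(\IR G)^{\IZ/2}$ splitting and of the vanishing of $\Gamma_2$ is correct and is what one expects.
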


\begin{remark}[The strategy of proof]
  \label{rem:the_strategy_of_proof}
  Let $M$ be a compact $G$-manifold with $G$ invariant Riemannian metric which is a
  product near the boundary.  Then its double $M \cup_{\partial M} M$ inherits a 
 $G   \times \IZ/2$-action and a $G \times \IZ/2$-invariant metric.  It turns out that the
  equivariant torsion $\rho^{G \times \IZ/2}_{\an}(M \cup_{\partial M} M)$ carries the same 
  information as $\rho^{G}_{\an}(M)$ and    $\rho^{G}_{\an}(M,\partial M)$ together. This is also true for 
  $\rho^{G \times \IZ/2}_{\topo}(M \cup_{\partial M} M)$ but the concrete formulas are different for the topological and
  analytical setting, the difference term is essentially $\frac{\ln(2)}{2}\cdot   \chi^G(\partial M)$.  
  Thus one can reduce the case of  a compact $G$-manifold to  the a case
  of a closed $G \times \IZ/2$-manifold. 

  Lott-Rothenberg~\cite{Lott-Rothenberg(1991)} handled  the odd-dimensional case without boundary 
  using ideas of Cheeger~\cite{Cheeger(1979)} and M\"uller~\cite{Mueller(1978)}.
  They noticed that in the even-dimensional case the analytic and topological torsion do not agree
without computing the correction term, which turns out to be the Poincar\'e torsion.
\end{remark}

\begin{remark}[Unit spheres in representations]
\label{rem:Unit_spheres_in_representations}
The Poincar'e duality torsion can be used to reprove  the 
celebrated result of de Rham~\cite{deRham(1964)} that two orthogonal 
$G$-representations $V$ and $W$ are isometrically $\IR G$-isomorphic 
if and only if their unit spheres are 
$G$-diffeomorphic, see~\cite[Section~5]{Lueck(1993)}.
Similar proofs can be found in Rothenberg~\cite{Rothenberg(1978)} and 
Lott-Rothenberg~\cite{Lott-Rothenberg(1991)}. The result is an extension 
of the classification of lens spaces which is carried out for example in 
Cohen~\cite{Cohen(1973)} and Milnor~\cite{Milnor(1966)}. \par

The result of de Rham does not hold in the topological 
category. Namely, there are non-linearly isomorphic $G$-representations 
$V$ and $W$ whose unit spheres are $G$-homeomorphic, see 
Cappell-Shaneson~\cite{Cappell-Shaneson(1981)}), and
also~\cite{Cappell-Shaneson-Steinberger-Weinberger-West(1990),Cappell-Shaneson-Steinberger-West(1989), 
Hambleton-Pedersen(2007)}.

However, if $G$ has odd  order, $G$-homeomorphic implies $G$-diffeo\-mor\-phic for unit spheres in 
$G$-representations as shown by Hsiang-Pardon~\cite{Hsiang-Pardon(1982)} 
and Madsen-Rothen\-berg~\cite{Madsen-Rothenberg(1988a)}.
\end{remark}

\begin{example}[$S^1$ with complex conjugation] \label{exa:S1_with_complex-conjugation}
Fix a positive real number $\mu$.  Equip $\IR$ with the standard
metric and the unit circle $S^1$ with the Riemannian metric for which
$\IR \to S^1, t \mapsto \exp(2\pi i\mu^{-1}t)$ is
isometric. Then $S^1$ has volume $\mu$.  Let $\IZ/2$ act on $S^1$ by complex
conjugation. We get by a direct  computation, see~\cite[Example~1.15]{Lueck(1993)}
\[
\rho_{\an}^{\IZ/2}(S^1) = \ln(\mu ) \cdot ([\IR] + [\IR^-]) \in \IR \otimes_{\IZ} \Rep_{\IR}(\IZ/2),
\]
where $\IR$ is the trivial $1$-dimensional real $\IZ/2$-representation and $\IR^-$ is the
$1$-dimensional $\IZ/2$-representation for which the generator of $\IZ/2$ acts by
$-\id_{\IR}$.  We obtain in $K_1(\IR[\IZ/2])^{\IZ/2}$ by a direct computation,
see~\cite[Example~3.25]{Lueck(1993)},
\begin{eqnarray*}
\rho_{\topo}^{\IZ/2}(S^1) 
& = &
[\mu/2 \cdot \id \colon \IR \to \IR] + [2\mu \cdot \id \colon \IR^- \to \IR^-];
\\
\rho_{\pd}^{\IZ/2}(S^1) 
& = &
[4 \cdot \id \colon \IR \to \IR] + [1/4 \cdot \id \colon \IR^- \to \IR^-].
\end{eqnarray*}
This is compatible with Theorem~\ref{the:Equivariant_torsion}.
\end{example}

\begin{example}[$S^1$ with antipodal action] \label{exa:S1_with_antipodal_action}
Fix a positive real number $\mu$.  Equip $\IR$ with the standard
Riemannian metric and the unit circle $S^1$ with the Riemannian metric for which
$\IR \to S^1, t \mapsto \exp(2\pi i\mu^{-1}t)$ is
isometric. Then $S^1$ has volume $\mu$.  Let $\IZ/2$ act on $S^1$ by the antipodal map
which sends $z$ to $-z$.
This is a free orientation preserving action.
Then 
\begin{eqnarray*}
\rho_{\an}^{\IZ/2}(S^1) 
& = & 
\ln(\mu ) \cdot [\IR[\IZ/2]];
\\
\rho_{\topo}^{\IZ/2}(S^1) 
& = &
[\mu \cdot \id \colon \IR[\IZ/2] \to \IR[\IZ/2]];
\\
\rho_{\pd}^{\IZ/2}(S^1) 
& = &
0.
\end{eqnarray*}
\end{example}

As an illustration we state the following corollary of Theorem~\ref{the:Equivariant_torsion}
and basic considerations about Poincar\'e duality, which explains the role of
the Poincar\'e torsion that  does not appear in the non-equivariant setting,
see~\cite[Corollary~5.6]{Lueck(1993)}.

\begin{corollary} \label{cor:orientable_case} Let $M$ be a Riemannian $G$-manifold
with invariant  Riemannian metric.  Suppose that $M$ is 
closed and orientable and $G$ acts orientation preserving. 

\begin{enumerate} 
\item \label{cor:orientable_case:odd_dimension} 
If $dim(M)$ is odd , we have
\begin{eqnarray*}
\rho_{\an}^G(M) 
& = & 
\Gamma_1(\rho_{\topo}(M));
\\
\rho_{\pd}^G(M) & = & 0;
\end{eqnarray*}
\item \label{cor:orientable_case:even_dimension} 
If $dim(M)$ is even,  we get
\begin{eqnarray*}
\rho_{\an}(M) & = & 0;
\\
\rho_{\topo}^G(M) & = & \frac{\rho_{\pd}^G(M)}{2}.
\end{eqnarray*}
\end{enumerate}
\end{corollary}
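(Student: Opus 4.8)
In the two assertions the superscript $G$ is understood throughout. First I would specialise Theorem~\ref{the:Equivariant_torsion} to the case at hand: since $M$ is closed we have $\partial M=\emptyset$ and hence $\chi^G(\partial M)=0$, so the theorem degenerates to the two identities
\[
\rho_{\an}^G(M) \;=\; \Gamma_1\bigl(\rho_{\topo}^G(M)\bigr)-\frac12\cdot\Gamma_1\bigl(\rho_{\pd}^G(M)\bigr),
\qquad
\Gamma_2\bigl(\rho_{\topo}^G(M)\bigr)=\Gamma_2\bigl(\rho_{\pd}^G(M)\bigr)=0 .
\]
Both parts of the corollary will be extracted from these two identities together with the spectral symmetry coming from Hodge--Poincar\'e duality.

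For part~\ref{cor:orientable_case:odd_dimension} I would invoke the vanishing $\rho_{\pd}^G(M)=0$ for closed $M$ of odd dimension, recorded in the discussion preceding Theorem~\ref{the:Equivariant_torsion} (one way to see it: the cap-product chain equivalence $-\cap[M]\colon C^{d-\ast}(M)\to C_\ast(M)$ is self-dual up to sign, which in odd dimensions forces $\rho_{\pd}^G(M)$ to be $2$-torsion; since $\Gamma_1$ lands in a $\IQ$-vector space and $\Gamma_2(\rho_{\pd}^G(M))=0$ by the second identity, injectivity of $\Gamma_1\oplus\Gamma_2$ then gives $\rho_{\pd}^G(M)=0$). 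Substituting $\rho_{\pd}^G(M)=0$ into the first identity yields $\rho_{\an}^G(M)=\Gamma_1(\rho_{\topo}^G(M))$, which is part~\ref{cor:orientable_case:odd_dimension}.

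For part~\ref{cor:orientable_case:even_dimension} the main work is to prove $\rho_{\an}^G(M)=0$ when $d=\dim(M)$ is even; the topological identity will then be a formal consequence. Here Poincar\'e duality enters through the Hodge star $\ast^n\colon\Omega^n(M)\to\Omega^{d-n}(M)$: it commutes with the Laplace operator ($\ast^n\circ\Delta_n=\Delta_{d-n}\circ\ast^n$) and, because $G$ acts by orientation-preserving isometries, it is $G$-equivariant; hence it restricts to an $\IR G$-isomorphism $E_\lambda(\Delta_n)\xrightarrow{\cong}E_\lambda(\Delta_{d-n})$ for every eigenvalue $\lambda$. Consequently the equivariant Zeta-functions $\zeta_n^G(s)=\sum_{\lambda>0}[E_\lambda(\Delta_n)]\cdot\lambda^{-s}\in\IR\otimes_{\IZ}\Rep_{\IR}(G)$ satisfy $\zeta_n^G=\zeta_{d-n}^G$ as meromorphic functions. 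Set $S=\sum_n(-1)^n\cdot n\cdot\zeta_n^G$ and $T=\sum_n(-1)^n\cdot\zeta_n^G$. Re-indexing $n\mapsto d-n$ and using $\zeta_n^G=\zeta_{d-n}^G$ together with the evenness of $d$ gives $S=d\cdot T-S$, i.e.\ $2S=d\cdot T$. On the other hand, for each eigenvalue $\lambda>0$ the finite $G$-cochain complex $(E_\lambda(\Delta_\ast),d)$ is acyclic --- if $d\omega=0$ and $\Delta_n\omega=\lambda\omega$ then $\omega=\lambda^{-1}d(\delta\omega)$ with $\delta\omega\in E_\lambda(\Delta_{n-1})$ --- so its Euler characteristic $\sum_n(-1)^n[E_\lambda(\Delta_n)]$ vanishes in $\Rep_{\IR}(G)=K_0(\IR G)$; summing over $\lambda$ gives $T=0$. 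Hence $S=0$, so $\left.\frac{d}{ds}\right|_{s=0}S=0$, which is exactly $\rho_{\an}^G(M)=0$. Feeding this into the first identity gives $\Gamma_1\bigl(2\rho_{\topo}^G(M)\bigr)=\Gamma_1\bigl(\rho_{\pd}^G(M)\bigr)$, while $\Gamma_2\bigl(2\rho_{\topo}^G(M)\bigr)=0=\Gamma_2\bigl(\rho_{\pd}^G(M)\bigr)$ because the target of $\Gamma_2$ is a $\IZ/2$-module and by the second identity; injectivity of $\Gamma_1\oplus\Gamma_2$ then yields $2\rho_{\topo}^G(M)=\rho_{\pd}^G(M)$, that is, $\rho_{\topo}^G(M)=\rho_{\pd}^G(M)/2$.

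The one genuine obstacle is the identity $\ast^n\circ\Delta_n=\Delta_{d-n}\circ\ast^n$ combined with the $G$-equivariance of the Hodge star --- this is precisely the point at which the hypothesis of an orientation-preserving action is used, and it is what promotes the classical spectral symmetry $\zeta_n=\zeta_{d-n}$ to the equivariant identity $\zeta_n^G=\zeta_{d-n}^G$ required above; the remaining ingredients (acyclicity of the positive eigenspaces, vanishing of Euler characteristics of acyclic complexes in $K_0$, and the splitting of $K_1(\IR G)^{\IZ/2}$ provided by $\Gamma_1\oplus\Gamma_2$) are routine. Alternatively, part~\ref{cor:orientable_case:even_dimension} can be proved purely topologically, deducing $\rho_{\topo}^G(M)=\rho_{\pd}^G(M)/2$ from the transformation law of equivariant Whitehead torsion under dualisation applied to $-\cap[M]$ --- the homology-correction terms cancelling because the Hodge star realises Poincar\'e duality isometrically on harmonic forms --- and then reading off $\rho_{\an}^G(M)=0$ from the first identity.
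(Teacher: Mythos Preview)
The paper does not give a proof of this corollary; it merely states that it follows from Theorem~\ref{the:Equivariant_torsion} together with ``basic considerations about Poincar\'e duality'' and refers to~\cite[Corollary~5.6]{Lueck(1993)} for details. Your proposal fills in precisely these details along the indicated route: you specialise Theorem~\ref{the:Equivariant_torsion} to the closed case, invoke the stated vanishing of $\rho_{\pd}^G(M)$ in odd dimensions for part~\ref{cor:orientable_case:odd_dimension}, and for part~\ref{cor:orientable_case:even_dimension} you use the $G$-equivariance of the Hodge star (this is where the orientation-preserving hypothesis enters) to obtain the equivariant spectral symmetry $\zeta_n^G=\zeta_{d-n}^G$, combined with the acyclicity of the positive eigenspace subcomplexes to force $\rho_{\an}^G(M)=0$; the topological identity then drops out of the injectivity of $\Gamma_1\oplus\Gamma_2$. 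This is correct and matches the paper's outlined approach.

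One small clarification worth making explicit: in the final step of part~\ref{cor:orientable_case:even_dimension} the expression $\rho_{\pd}^G(M)/2$ is well-defined because $\Gamma_2(\rho_{\pd}^G(M))=0$ places $\rho_{\pd}^G(M)$ in the uniquely $2$-divisible summand $\ker(\Gamma_2)\cong\IR\otimes_{\IZ}\Rep_{\IR}(G)$; you implicitly use this, and it is harmless, but a reader might appreciate seeing it said.
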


\begin{remark}[Twisting with equivariant coefficient systems]
  \label{rem:Twisting_with_coefficient}
  There are also versions of the notions and results  of this section for appropriate 
equivariant coefficient system as explained in~\cite{Lueck(1993)}.
\end{remark}


\typeout{-------------------------------------- Section 7: Outlook------------------------------}

\section{Outlook}
\label{sec:Outlook}


\subsection{Analytic torsion}
\label{subsec:Analytic_torsion}

There are many important papers about analytic  torsion and variations of it
in the literature.  We have to leave it to the reader to figure out the relevant
authors and papers since an appropriate discussion would go far beyond the scope of this
article. At least we give a list of references which is far from being complete.
They concern for instance determinant lines, holomorphic versions, higher versions, equivariant versions,
singular spaces, algebraic varieties, and hyperbolic manifolds, 
see~\cite{Bismut(1993),Bismut(1997fam),Bismut(1998), Bismut-Gillet-Soule(1988a), Bismut-Gillet-Soule(1988b),
Bismut-Gillet-Soule(1988c), Bismut-Goette(2000a),Bismut-Goette(2000b),Bismut-Goette(2001),
Bismut-Koehler(1992), Bismut-Lott(1993), Bismut-Lott(1995), Bismut-Zhang(1992),  Bismut-Zhang(1994),
Braverman-Kappeler(2008),Bunke(2000),
Burghelea(1997), Burghelea-Friedlander-Kappeler(1998),
Burghelea-Haller(2001), Burghelea-Haller(2006Turaev), Burghelea-Haller(2006Riemannian), 
Burghelea-Haller(2007), Burghelea-Haller(2008dynamics), Burghelea-Haller(2008function), 
Burghelea-Haller(2010), Burgos-Freixas-Litcanu-Ruazvan(2014),Cappell-Miller(2010), 
Dai-Melrose(2012), Farber(1995), Farber(1996a),Felshtyn(2000), Fried(1986a), Koehler-Weingart(2003),
Lesch(2013),Lott(1994Lie), Lott(1999),Mazzeo-Vertman(2012),Mourougane(2006),Mueller(1993),Mueller(2012), 
Mueller-Pfaff(2013locsym), Ray-Singer(1973),Vishik(1995),Yoshikawa(2004),Yoshikawa(2006),Yoshikawa(2013)}.


\subsection{Topological torsion}
\label{subsec:topological_torsion}

Also for topological torsion there are many important papers  in the literature. In particular
Whitehead torsion and Reidemeister torsion have been intensively studied.
Again we have to leave it to the reader to figure out the relevant
authors and papers since an appropriate discussion would go far beyond the scope of this
article. At least we give a list of references which is  far from being complete,
they concern for instance $s$-cobordisms, knot theory, classification of manifolds, equivariant versions,
and higher versions,
see~\cite{Badzioch-Dorabia-Klein-Williams(2011),Bartels-Lueck(2012annals), 
Chapman(1973), Chapman(1974),Cohen(1973),Igusa(2002),Igusa(2008),Kervaire(1965), Kirby-Siebenmann(1977), 
Lueck(1989), Lueck(2002c), Lustig-Moriah(1993b),Madsen(1983), Milnor(1961), Milnor(1962a), Milnor(1966), Nicolaescu(2003),
Oliver(1989), Ranicki(1985_torsI), Ranicki(1987_torsII), Ranicki(1996b), 
Reidemeister(1938), Siebenmann(1970), Steimle(2012), Turaev(1986),
Whitehead(1941_incnuclei),Whitehead(1949_comHomI,Whitehead(1950)}.


\subsection{$L^2$-versions}
\label{subsec:L2-versions}

The next level is to pass to non-compact spaces, essentially to a $G$-covering 
$\overline{M} \to M$ for a closed Riemannian manifold $M$ and the induced $G$-invariant 
Riemannian metric on $\overline{M}$ or to a $G$-covering $\overline{X} \to X$ for a finite
$CW$-complex $X$, where $G$ is a (not necessarily finite) discrete group.
 This requires to extend our basic invariants of finite-dimensional
Hilbert spaces of Sections~\ref{sec:Operators_of_finite-dimensional_Hilbert_spaces}
and~\ref{sec:Finite_Hilbert_chain_complexes} to an appropriate setting of infinite
dimensional Hilbert spaces taking the cocompact free proper group action on $\overline{M}$ or
$\overline{X}$ into account.  Here group von Neumann algebras play a key role. The
first instance where this has been carried out is the paper by Atiyah~\cite{Atiyah(1976)}.
Generalizations of  the ideas about the spectral density function presented in
Subsection~\ref{subsec:The_spectrum_and_in_the_spectral_density_function}
and~\ref{subsec:Rewriting_determinants}.
come into play and lead for instance to the notion of the Fuglede-Kadison determinant,
which generalizes the classical determinant to this setting. The material presented in 
Subsection~\ref{subsec:The_spectrum_and_in_the_spectral_density_function}
and~\ref{subsec:Rewriting_determinants} is  helpful 
if one wants to  understand the $L^2$-versions.

All this leads to the notions of $L^2$-Betti numbers and of $L^2$-torsion, which have been
intensively studied in the literature and have many applications to problems arising in
topology, geometry, group theory and von Neumann algebras. A discussion of these $L^2$-invariants
would go far beyond the scope of this
article. For more information about
the circle of these ideas and invariants we refer for instance
to~\cite{Lueck(2002),Lueck(2009algview),Lueck(2015_l2approx)}.

\typeout{-------------------------------------- References  ---------------------------------------}


\begin{thebibliography}{100}

\bibitem{Atiyah(1976)}
M.~F. Atiyah.
\newblock Elliptic operators, discrete groups and von {N}eumann algebras.
\newblock {\em Ast\'erisque}, 32-33:43--72, 1976.

\bibitem{Badzioch-Dorabia-Klein-Williams(2011)}
B.~Badzioch, W.~Dorabia{\l}a, J.~R. Klein, and B.~Williams.
\newblock Equivalence of higher torsion invariants.
\newblock {\em Adv. Math.}, 226(3):2192--2232, 2011.

\bibitem{Bartels-Lueck(2012annals)}
A.~Bartels and W.~L{\"u}ck.
\newblock The {B}orel conjecture for hyperbolic and {CAT(0)}-groups.
\newblock {\em Ann. of Math. (2)}, 175:631--689, 2012.

\bibitem{Bismut(1993)}
J.-M. Bismut.
\newblock From {Q}uillen metrics to {R}eidemeister metrics: some aspects of the
  {R}ay-{S}inger analytic torsion.
\newblock In {\em Topological methods in modern mathematics (Stony Brook, NY,
  1991)}, pages 273--324. Publish or Perish, Houston, TX, 1993.

\bibitem{Bismut(1997fam)}
J.-M. Bismut.
\newblock Holomorphic families of immersions and higher analytic torsion forms.
\newblock {\em Ast\'erisque}, 244:viii+275, 1997.

\bibitem{Bismut(1998)}
J.-M. Bismut.
\newblock Local index theory and higher analytic torsion.
\newblock In {\em Proceedings of the {I}nternational {C}ongress of
  {M}athematicians, {V}ol. {I} ({B}erlin, 1998)}, volume Extra Vol. I, pages
  143--162 (electronic), 1998.

\bibitem{Bismut-Gillet-Soule(1988a)}
J.-M. Bismut, H.~Gillet, and C.~Soul{\'e}.
\newblock Analytic torsion and holomorphic determinant bundles. {I}.
  {B}ott-{C}hern forms and analytic torsion.
\newblock {\em Comm. Math. Phys.}, 115(1):49--78, 1988.

\bibitem{Bismut-Gillet-Soule(1988b)}
J.-M. Bismut, H.~Gillet, and C.~Soul{\'e}.
\newblock Analytic torsion and holomorphic determinant bundles. {I}{I}.
  {D}irect images and {B}ott-{C}hern forms.
\newblock {\em Comm. Math. Phys.}, 115(1):79--126, 1988.

\bibitem{Bismut-Gillet-Soule(1988c)}
J.-M. Bismut, H.~Gillet, and C.~Soul{\'e}.
\newblock Analytic torsion and holomorphic determinant bundles. {I}{I}{I}.
  {Q}uillen metrics on holomorphic determinants.
\newblock {\em Comm. Math. Phys.}, 115(2):301--351, 1988.

\bibitem{Bismut-Goette(2000a)}
J.-M. Bismut and S.~Goette.
\newblock Formes de torsion analytique en th\'eorie de de {R}ham et fonctions
  de {M}orse.
\newblock {\em C. R. Acad. Sci. Paris S\'er. I Math.}, 330(6):479--484, 2000.

\bibitem{Bismut-Goette(2000b)}
J.-M. Bismut and S.~Goette.
\newblock Rigidit\'e des formes de torsion analytique en th\'eorie de de
  {R}ham.
\newblock {\em C. R. Acad. Sci. Paris S\'er. I Math.}, 330(6):471--477, 2000.

\bibitem{Bismut-Goette(2001)}
J.-M. Bismut and S.~Goette.
\newblock Torsions analytiques \'equivariantes en th\'eorie de de {R}ham.
\newblock {\em C. R. Acad. Sci. Paris S\'er. I Math.}, 332(1):33--39, 2001.

\bibitem{Bismut-Koehler(1992)}
J.-M. Bismut and K.~K{\"o}hler.
\newblock Higher analytic torsion forms for direct images and anomaly formulas.
\newblock {\em J. Algebraic Geom.}, 1(4):647--684, 1992.

\bibitem{Bismut-Lott(1993)}
J.-M. Bismut and J.~Lott.
\newblock Fibr\'es plats, images directes et formes de torsion analytique.
\newblock {\em C. R. Acad. Sci. Paris S\'er. I Math.}, 316(5):477--482, 1993.

\bibitem{Bismut-Lott(1995)}
J.-M. Bismut and J.~Lott.
\newblock Flat vector bundles, direct images and higher real analytic torsion.
\newblock {\em J. Amer. Math. Soc.}, 8(2):291--363, 1995.

\bibitem{Bismut-Zhang(1992)}
J.-M. Bismut and W.~Zhang.
\newblock An extension of a theorem by {C}heeger and {M}\"uller.
\newblock {\em Ast\'erisque}, 205:235, 1992.
\newblock With an appendix by F.~Laudenbach.

\bibitem{Bismut-Zhang(1994)}
J.-M. Bismut and W.~Zhang.
\newblock Milnor and {R}ay-{S}inger metrics on the equivariant determinant of a
  flat vector bundle.
\newblock {\em Geom. Funct. Anal.}, 4(2):136--212, 1994.

\bibitem{Braverman-Kappeler(2008)}
M.~Braverman and T.~Kappeler.
\newblock Refined analytic torsion.
\newblock {\em J. Differential Geom.}, 78(2):193--267, 2008.

\bibitem{Bredon(1997a)}
G.~E. Bredon.
\newblock {\em Topology and geometry}.
\newblock Springer-Verlag, New York, 1997.
\newblock Corrected third printing of the 1993 original.

\bibitem{Bunke(2000)}
U.~Bunke.
\newblock Equivariant higher analytic torsion and equivariant {E}uler
  characteristic.
\newblock {\em Amer. J. Math.}, 122(2):377--401, 2000.

\bibitem{Burghelea(1997)}
D.~Burghelea.
\newblock Lectures on {W}itten-{H}elffer-{S}j\"ostrand theory.
\newblock In {\em Proceedings of the {T}hird {I}nternational {W}orkshop on
  {D}ifferential {G}eometry and its {A}pplications and the {F}irst
  {G}erman-{R}omanian {S}eminar on {G}eometry ({S}ibiu, 1997)}, volume~5, pages
  85--99, 1997.

\bibitem{Burghelea-Friedlander-Kappeler(1998)}
D.~Burghelea, L.~Friedlander, and T.~Kappeler.
\newblock Witten deformation of the analytic torsion and the {R}eidemeister
  torsion.
\newblock In {\em Voronezh Winter Mathematical Schools}, pages 23--39. Amer.
  Math. Soc., Providence, RI, 1998.
\newblock Trans. Ser. 2 184 (1998).

\bibitem{Burghelea-Haller(2001)}
D.~Burghelea and S.~Haller.
\newblock On the topology and analysis of a closed one form. {I} ({N}ovikov's
  theory revisited).
\newblock In {\em Essays on geometry and related topics, {V}ol. 1, 2},
  volume~38 of {\em Monogr. Enseign. Math.}, pages 133--175. Enseignement
  Math., Geneva, 2001.

\bibitem{Burghelea-Haller(2006Turaev)}
D.~Burghelea and S.~Haller.
\newblock Euler structures, the variety of representations and the
  {M}ilnor-{T}uraev torsion.
\newblock {\em Geom. Topol.}, 10:1185--1238, 2006.

\bibitem{Burghelea-Haller(2006Riemannian)}
D.~Burghelea and S.~Haller.
\newblock A {R}iemannian invariant, {E}uler structures and some topological
  applications.
\newblock In {\em {$C^\ast$}-algebras and elliptic theory}, Trends Math., pages
  37--60. Birkh\"auser, Basel, 2006.

\bibitem{Burghelea-Haller(2007)}
D.~Burghelea and S.~Haller.
\newblock Complex-valued {R}ay-{S}inger torsion.
\newblock {\em J. Funct. Anal.}, 248(1):27--78, 2007.

\bibitem{Burghelea-Haller(2008dynamics)}
D.~Burghelea and S.~Haller.
\newblock Dynamics, {L}aplace transform and spectral geometry.
\newblock {\em J. Topol.}, 1(1):115--151, 2008.

\bibitem{Burghelea-Haller(2008function)}
D.~Burghelea and S.~Haller.
\newblock Torsion, as a function on the space of representations.
\newblock In {\em {$C^\ast$}-algebras and elliptic theory {II}}, Trends Math.,
  pages 41--66. Birkh\"auser, Basel, 2008.

\bibitem{Burghelea-Haller(2010)}
D.~Burghelea and S.~Haller.
\newblock Complex valued {R}ay-{S}inger torsion {II}.
\newblock {\em Math. Nachr.}, 283(10):1372--1402, 2010.

\bibitem{Burgos-Freixas-Litcanu-Ruazvan(2014)}
J.~I. Burgos~Gil, G.~Freixas~i Montplet, and R.~Li{\c{t}}canu.
\newblock Generalized holomorphic analytic torsion.
\newblock {\em J. Eur. Math. Soc. (JEMS)}, 16(3):463--535, 2014.

\bibitem{Cappell-Miller(2010)}
S.~E. Cappell and E.~Y. Miller.
\newblock Complex-valued analytic torsion for flat bundles and for holomorphic
  bundles with {$(1,1)$} connections.
\newblock {\em Comm. Pure Appl. Math.}, 63(2):133--202, 2010.

\bibitem{Cappell-Shaneson(1981)}
S.~E. Cappell and J.~L. Shaneson.
\newblock Nonlinear similarity.
\newblock {\em Ann. of Math. (2)}, 113(2):315--355, 1981.

\bibitem{Cappell-Shaneson-Steinberger-Weinberger-West(1990)}
S.~E. Cappell, J.~L. Shaneson, M.~Steinberger, S.~Weinberger, and J.~E. West.
\newblock The classification of nonlinear similarities over ${Z}\sb {2\sp r}$.
\newblock {\em Bull. Amer. Math. Soc. (N.S.)}, 22(1):51--57, 1990.

\bibitem{Cappell-Shaneson-Steinberger-West(1989)}
S.~E. Cappell, J.~L. Shaneson, M.~Steinberger, and J.~E. West.
\newblock Nonlinear similarity begins in dimension six.
\newblock {\em Amer. J. Math.}, 111(5):717--752, 1989.

\bibitem{Chapman(1973)}
T.~A. Chapman.
\newblock Compact {H}ilbert cube manifolds and the invariance of {W}hitehead
  torsion.
\newblock {\em Bull. Amer. Math. Soc.}, 79:52--56, 1973.

\bibitem{Chapman(1974)}
T.~A. Chapman.
\newblock Topological invariance of {W}hitehead torsion.
\newblock {\em Amer. J. Math.}, 96:488--497, 1974.

\bibitem{Cheeger(1979)}
J.~Cheeger.
\newblock Analytic torsion and the heat equation.
\newblock {\em Ann. of Math. (2)}, 109(2):259--322, 1979.

\bibitem{Cohen(1973)}
M.~M. Cohen.
\newblock {\em A course in simple-homotopy theory}.
\newblock Springer-Verlag, New York, 1973.
\newblock Graduate Texts in Mathematics, Vol. 10.

\bibitem{Dai-Melrose(2012)}
X.~Dai and R.~B. Melrose.
\newblock Adiabatic limit, heat kernel and analytic torsion.
\newblock In {\em Metric and differential geometry}, volume 297 of {\em Progr.
  Math.}, pages 233--298. Birkh\"auser/Springer, Basel, 2012.

\bibitem{deRham(1964)}
G.~de~Rham.
\newblock Reidemeister's torsion invariant and rotations of ${S}\sp n$.
\newblock In {\em Differential Analysis, Bombay Colloq.}, pages 27--36. Oxford
  Univ. Press, London, 1964.

\bibitem{deRham(1984)}
G.~de~Rham.
\newblock {\em Differentiable manifolds}.
\newblock Springer-Verlag, Berlin, 1984.
\newblock Forms, currents, harmonic forms, Translated from the French by F. R.
  Smith, With an introduction by S. S. Chern.

\bibitem{Dodziuk-Patodi(1976)}
J.~Dodziuk and V.~K. Patodi.
\newblock Riemannian structures and triangulations of manifolds.
\newblock {\em J. Indian Math. Soc. (N.S.)}, 40(1-4):1--52 (1977), 1976.

\bibitem{Dupont(1978)}
J.~L. Dupont.
\newblock {\em Curvature and characteristic classes}.
\newblock Springer-Verlag, Berlin, 1978.
\newblock Lecture Notes in Mathematics, Vol. 640.

\bibitem{Farber(1995)}
M.~Farber.
\newblock Singularities of the analytic torsion.
\newblock {\em J. Differential Geom.}, 41(3):528--572, 1995.

\bibitem{Farber(1996a)}
M.~Farber.
\newblock Combinatorial invariants computing the {R}ay-{S}inger analytic
  torsion.
\newblock {\em Differential Geom. Appl.}, 6(4):351--366, 1996.

\bibitem{Felshtyn(2000)}
A.~Fel{\cprime}shtyn.
\newblock Dynamical zeta functions, {N}ielsen theory and {R}eidemeister
  torsion.
\newblock {\em Mem. Amer. Math. Soc.}, 147(699):xii+146, 2000.

\bibitem{Fried(1986a)}
D.~Fried.
\newblock Analytic torsion and closed geodesics on hyperbolic manifolds.
\newblock {\em Invent. Math.}, 84(3):523--540, 1986.

\bibitem{Gaffney(1954)}
M.~P. Gaffney.
\newblock A special {S}tokes's theorem for complete {R}iemannian manifolds.
\newblock {\em Ann. of Math. (2)}, 60:140--145, 1954.

\bibitem{Gallot-Hulin-Lafontaine(1987)}
S.~Gallot, D.~Hulin, and J.~Lafontaine.
\newblock {\em Riemannian geometry}.
\newblock Springer-Verlag, Berlin, 1987.

\bibitem{Gilkey(1994)}
P.~Gilkey.
\newblock {\em Invariance theory, the heat equation and the Atiyah-Singer index
  theorem}.
\newblock Cambridge University Press, 1994.

\bibitem{Hambleton-Pedersen(2007)}
I.~Hambleton and E.~K. Pedersen.
\newblock Topological equivalence of linear representations of cyclic groups.
  {I}.
\newblock {\em Ann. of Math. (2)}, 161(1):61--104, 2005.

\bibitem{Hsiang-Pardon(1982)}
W.~C. Hsiang and W.~L. Pardon.
\newblock When are topologically equivalent orthogonal transformations linearly
  equivalent?
\newblock {\em Invent. Math.}, 68(2):275--316, 1982.

\bibitem{Igusa(2002)}
K.~Igusa.
\newblock {\em Higher {F}ranz-{R}eidemeister torsion}, volume~31 of {\em AMS/IP
  Studies in Advanced Mathematics}.
\newblock American Mathematical Society, Providence, RI, 2002.

\bibitem{Igusa(2008)}
K.~Igusa.
\newblock Axioms for higher torsion invariants of smooth bundles.
\newblock {\em J. Topol.}, 1(1):159--186, 2008.

\bibitem{Kervaire(1965)}
M.~A. Kervaire.
\newblock Le th\'eor\`eme de {B}arden-{M}azur-{S}tallings.
\newblock {\em Comment. Math. Helv.}, 40:31--42, 1965.

\bibitem{Kirby-Siebenmann(1977)}
R.~C. Kirby and L.~C. Siebenmann.
\newblock {\em Foundational essays on topological manifolds, smoothings, and
  triangulations}.
\newblock Princeton University Press, Princeton, N.J., 1977.
\newblock With notes by J.~Milnor and M.~F.~Atiyah, Annals of Mathematics
  Studies, No. 88.

\bibitem{Koehler-Weingart(2003)}
K.~K{\"o}hler and G.~Weingart.
\newblock Quaternionic analytic torsion.
\newblock {\em Adv. Math.}, 178(2):375--395, 2003.

\bibitem{Lesch(2013)}
M.~Lesch.
\newblock A gluing formula for the analytic torsion on singular spaces.
\newblock {\em Anal. PDE}, 6(1):221--256, 2013.

\bibitem{Lott(1994Lie)}
J.~Lott.
\newblock Equivariant analytic torsion for compact {L}ie group actions.
\newblock {\em J. Funct. Anal.}, 125(2):438--451, 1994.

\bibitem{Lott(1999)}
J.~Lott.
\newblock Diffeomorphisms and noncommutative analytic torsion.
\newblock {\em Mem. Amer. Math. Soc.}, 141(673):viii+56, 1999.

\bibitem{Lott-Rothenberg(1991)}
J.~Lott and M.~Rothenberg.
\newblock Analytic torsion for group actions.
\newblock {\em J. Differential Geom.}, 34(2):431--481, 1991.

\bibitem{Lueck(1989)}
W.~L{\"u}ck.
\newblock {\em Transformation groups and algebraic ${K}$-theory}, volume 1408
  of {\em Lecture Notes in Mathematics}.
\newblock Springer-Verlag, Berlin, 1989.

\bibitem{Lueck(1993)}
W.~L{\"u}ck.
\newblock Analytic and topological torsion for manifolds with boundary and
  symmetry.
\newblock {\em J. Differential Geom.}, 37(2):263--322, 1993.

\bibitem{Lueck(2002c)}
W.~L{\"u}ck.
\newblock A basic introduction to surgery theory.
\newblock In F.~T. Farrell, L.~G\"ottsche, and W.~L{\"u}ck, editors, {\em High
  dimensional manifold theory}, number~9 in ICTP Lecture Notes, pages 1--224.
  Abdus Salam International Centre for Theoretical Physics, Trieste, 2002.
\newblock Proceedings of the summer school ``High dimensional manifold theory''
  in Trieste May/June 2001, Number~1.
  http://www.ictp.trieste.it/\~{}pub\_off/lectures/vol9.html.

\bibitem{Lueck(2002)}
W.~L{\"u}ck.
\newblock {\em {$L\sp 2$}-{I}nvariants: {T}heory and {A}pplications to
  {G}eometry and \mbox{{$K$}-{T}heory}}, volume~44 of {\em Ergebnisse der
  Mathematik und ihrer Grenzgebiete. 3.~Folge. A Series of Modern Surveys in
  Mathematics [Results in Mathematics and Related Areas. 3rd Series. A Series
  of Modern Surveys in Mathematics]}.
\newblock Springer-Verlag, Berlin, 2002.

\bibitem{Lueck(2005algtop)}
W.~L\"uck.
\newblock {\em Algebraic topology. Homology and manifolds}.
\newblock Vieweg Studium: Aufbaukurs Mathematik. Vieweg, 2005.

\bibitem{Lueck(2009algview)}
W.~L{\"u}ck.
\newblock {$L^2$}-invariants from the algebraic point of view.
\newblock In {\em Geometric and cohomological methods in group theory}, volume
  358 of {\em London Math. Soc. Lecture Note Ser.}, pages 63--161. Cambridge
  Univ. Press, Cambridge, 2009.

\bibitem{Lueck(2015_l2approx)}
W.~L{\"u}ck.
\newblock Survey on approximating {$L^2$}-invariants by their classical
  counterparts: Betti numbers, torsion invariants and homological growth.
\newblock preprint, arXiv:1501.07446 [math.GT], 2015.

\bibitem{Lueck-Schick-Thielmann(1998)}
W.~L{\"u}ck, T.~Schick, and T.~Thielmann.
\newblock Torsion and fibrations.
\newblock {\em J. Reine Angew. Math.}, 498:1--33, 1998.

\bibitem{Lustig-Moriah(1993b)}
M.~Lustig and Y.~Moriah.
\newblock Generating systems of groups and {R}eidemeister-{W}hitehead torsion.
\newblock {\em J. Algebra}, 157(1):170--198, 1993.

\bibitem{Madsen(1983)}
I.~Madsen.
\newblock Reidemeister torsion, surgery invariants and spherical space forms.
\newblock {\em Proc. London Math. Soc. (3)}, 46(2):193--240, 1983.

\bibitem{Madsen-Rothenberg(1988a)}
I.~Madsen and M.~Rothenberg.
\newblock On the classification of ${G}$-spheres. {I}. {E}quivariant
  transversality.
\newblock {\em Acta Math.}, 160(1-2):65--104, 1988.

\bibitem{Massey(1991)}
W.~S. Massey.
\newblock {\em A basic course in algebraic topology}.
\newblock Springer-Verlag, New York, 1991.

\bibitem{Mazzeo-Vertman(2012)}
R.~Mazzeo and B.~Vertman.
\newblock Analytic torsion on manifolds with edges.
\newblock {\em Adv. Math.}, 231(2):1000--1040, 2012.

\bibitem{Milnor(1961)}
J.~Milnor.
\newblock Two complexes which are homeomorphic but combinatorially distinct.
\newblock {\em Ann. of Math. (2)}, 74:575--590, 1961.

\bibitem{Milnor(1962a)}
J.~Milnor.
\newblock A duality theorem for {R}eidemeister torsion.
\newblock {\em Ann. of Math. (2)}, 76:137--147, 1962.

\bibitem{Milnor(1966)}
J.~Milnor.
\newblock Whitehead torsion.
\newblock {\em Bull. Amer. Math. Soc.}, 72:358--426, 1966.

\bibitem{Mourougane(2006)}
C.~Mourougane.
\newblock Analytic torsion of {H}irzebruch surfaces.
\newblock {\em Math. Ann.}, 335(1):221--247, 2006.

\bibitem{Mueller(1978)}
W.~M{\"u}ller.
\newblock Analytic torsion and ${R}$-torsion of {R}iemannian manifolds.
\newblock {\em Adv. in Math.}, 28(3):233--305, 1978.

\bibitem{Mueller(1993)}
W.~M{\"u}ller.
\newblock Analytic torsion and ${R}$-torsion for unimodular representations.
\newblock {\em J. Amer. Math. Soc.}, 6(3):721--753, 1993.

\bibitem{Mueller(2012)}
W.~M{\"u}ller.
\newblock The asymptotics of the {R}ay-{S}inger analytic torsion of hyperbolic
  3-manifolds.
\newblock In {\em Metric and differential geometry}, volume 297 of {\em Progr.
  Math.}, pages 317--352. Birkh\"auser/Springer, Basel, 2012.

\bibitem{Mueller-Pfaff(2013locsym)}
W.~M{\"u}ller and J.~Pfaff.
\newblock Analytic torsion and {$L^2$}-torsion of compact locally symmetric
  manifolds.
\newblock {\em J. Differential Geom.}, 95(1):71--119, 2013.

\bibitem{Munkres(1961)}
J.~R. Munkres.
\newblock {\em Elementary differential topology}, volume 1961 of {\em Lectures
  given at Massachusetts Institute of Technology, Fall}.
\newblock Princeton University Press, Princeton, N.J., 1966.

\bibitem{Nicolaescu(2003)}
L.~I. Nicolaescu.
\newblock {\em The {R}eidemeister torsion of 3-manifolds}, volume~30 of {\em de
  Gruyter Studies in Mathematics}.
\newblock Walter de Gruyter \& Co., Berlin, 2003.

\bibitem{Oliver(1989)}
R.~Oliver.
\newblock {\em Whitehead groups of finite groups}.
\newblock Cambridge University Press, Cambridge, 1988.

\bibitem{Ranicki(1985_torsI)}
A.~A. Ranicki.
\newblock The algebraic theory of torsion. {I}. {F}oundations.
\newblock In {\em Algebraic and geometric topology (New Brunswick, N.J.,
  1983)}, volume 1126 of {\em Lecture Notes in Math.}, pages 199--237.
  Springer, Berlin, 1985.

\bibitem{Ranicki(1987_torsII)}
A.~A. Ranicki.
\newblock The algebraic theory of torsion. {II}. {P}roducts.
\newblock {\em $K$-Theory}, 1(2):115--170, 1987.

\bibitem{Ranicki(1996b)}
A.~A. Ranicki.
\newblock On the {H}auptvermutung.
\newblock In {\em The Hauptvermutung book}, pages 3--31. Kluwer Acad. Publ.,
  Dordrecht, 1996.

\bibitem{Ray-Singer(1971)}
D.~B. Ray and I.~M. Singer.
\newblock ${R}$-torsion and the {L}aplacian on {R}iemannian manifolds.
\newblock {\em Advances in Math.}, 7:145--210, 1971.

\bibitem{Ray-Singer(1973)}
D.~B. Ray and I.~M. Singer.
\newblock Analytic torsion for complex manifolds.
\newblock {\em Ann. of Math. (2)}, 98:154--177, 1973.

\bibitem{Reidemeister(1938)}
K.~Reidemeister.
\newblock Homotopieringe und {L}insenr{\"a}ume.
\newblock Hamburger Abhandlungen 11, 1938.

\bibitem{Roe(1988a)}
J.~Roe.
\newblock {\em Elliptic operators, topology and asymptotic methods}, volume 179
  of {\em Pitman research notes in mathematics}.
\newblock Longman Scientific \& Technical, 1988.

\bibitem{Rothenberg(1978)}
M.~Rothenberg.
\newblock Torsion invariants and finite transformation groups.
\newblock In {\em Algebraic and geometric topology (Proc. Sympos. Pure Math.,
  Stanford Univ., Stanford, Calif., 1976), Part 1}, pages 267--311. Amer. Math.
  Soc., Providence, R.I., 1978.

\bibitem{Siebenmann(1970)}
L.~C. Siebenmann.
\newblock Infinite simple homotopy types.
\newblock {\em Nederl. Akad. Wetensch. Proc. Ser. A 73 = Indag. Math.},
  32:479--495, 1970.

\bibitem{Steimle(2012)}
W.~Steimle.
\newblock Higher {W}hitehead torsion and the geometric assembly map.
\newblock {\em J. Topol.}, 5(3):529--574, 2012.

\bibitem{Titchmarsh(1951)}
E.~C. Titchmarsh.
\newblock {\em The {T}heory of the {R}iemann {Z}eta-{F}unction}.
\newblock Oxford, at the Clarendon Press, 1951.

\bibitem{Turaev(1986)}
V.~G. Turaev.
\newblock Reidemeister torsion in knot theory.
\newblock {\em Uspekhi Mat. Nauk}, 41(1(247)):97--147, 240, 1986.
\newblock English translation in \emph{Russian Math. Surveys} 41 (1986), no. 1,
  119--182.

\bibitem{Vishik(1995)}
S.~M. Vishik.
\newblock Generalized {R}ay-{S}inger conjecture. {I}. {A} manifold with a
  smooth boundary.
\newblock {\em Comm. Math. Phys.}, 167(1):1--102, 1995.

\bibitem{Whitehead(1940_complexes)}
J.~H.~C. Whitehead.
\newblock On {$C\sp 1$}-complexes.
\newblock {\em Ann. of Math. (2)}, 41:809--824, 1940.

\bibitem{Whitehead(1941_incnuclei)}
J.~H.~C. Whitehead.
\newblock On incidence matrices, nuclei and homotopy types.
\newblock {\em Ann. of Math. (2)}, 42:1197--1239, 1941.

\bibitem{Whitehead(1949_comHomI}
J.~H.~C. Whitehead.
\newblock Combinatorial homotopy. {I}.
\newblock {\em Bull. Amer. Math. Soc.}, 55:213--245, 1949.

\bibitem{Whitehead(1950)}
J.~H.~C. Whitehead.
\newblock Simple homotopy types.
\newblock {\em Amer. J. Math.}, 72:1--57, 1950.

\bibitem{Yoshikawa(2004)}
K.-I. Yoshikawa.
\newblock {$K3$} surfaces with involution, equivariant analytic torsion, and
  automorphic forms on the moduli space.
\newblock {\em Invent. Math.}, 156(1):53--117, 2004.

\bibitem{Yoshikawa(2006)}
K.-I. Yoshikawa.
\newblock Analytic torsion and an invariant of {C}alabi-{Y}au threefold.
\newblock In {\em Differential geometry and physics}, volume~10 of {\em Nankai
  Tracts Math.}, pages 480--489. World Sci. Publ., Hackensack, NJ, 2006.

\bibitem{Yoshikawa(2013)}
K.-I. Yoshikawa.
\newblock {$K3$} surfaces with involution, equivariant analytic torsion, and
  automorphic forms on the moduli space, {II}: {A} structure theorem for
  {$r(M)>10$}.
\newblock {\em J. Reine Angew. Math.}, 677:15--70, 2013.

\end{thebibliography}

\end{document}